\newcommand{\field}[1]{\mathbb{#1}}
\newcommand{\R}{\field{R}}
\newcommand{\N}{\field{N}}
\newcommand{\Tr}{\mathrm{Tr}}
\newcommand{\Var}{\mathrm{Var}}
\newcommand{\supp}{\mathrm{supp}}
\newtheorem{theorem}{Theorem}
\newtheorem{proposition}{Proposition}
\newtheorem{definition}{Definition}
\newtheorem{lemma}{Lemma}
\newtheorem{corollary}{Corollary}
\newtheorem{remark}{Remark}
\newtheorem{assumption}{Assumption}
\numberwithin{equation}{section}
\date{}
\begin{document}

\title{\bf The One Dimensional Free Poincar\'e Inequality}

\author{Michel Ledoux}
\address{ Institut de Math\'ematiques de Toulouse, Universit\'e de Toulouse, F-31062 Toulouse, France,
and Institut Universitaire de France }
\email{ledoux@math.univ-toulouse.fr}

\author{Ionel Popescu} 
\thanks{I.P. was partially supported by Marie Curie Action grant nr. 249200.}
\address{Georgia Institute of Technology, 686 Cherry Street, Atlanta GA, 30332, USA} 
\address{ Institute of Mathematics of Romanian Academy,  21 Calea Grivitei Street,  010702-Bucharest, Sector 1, Romania} 
\email{ipopescu@math.gatech.edu}
\email{ionel.popescu@imar.ro}

\begin{abstract} In this paper we discuss the natural candidate for the one dimensional free Poincar\'e inequality.  Two main strong points sustain this candidacy.  One is the random matrix heuristic and the other the relations with the other free functional inequalities,  namely, the free transportation and Log-Sobolev
inequalities.   As in the classical case the Poincar\'e is implied by the others.  
This investigation is driven by a nice lemma of Haagerup which relates logarithmic potentials and Chebyshev polynomials.   The Poincar\'e inequality revolves around the counting number operator for the Chebyshev polynomials of first kind with respect to the arcsine law on $[-2,2]$.   This counting number operator appears naturally in  a representation of the minimum of the logarithmic energy with external fields discovered in \cite{GP} as well  as in the perturbation of logarithmic energy with external fields, which is the essential connection between all these inequalities.   
 \end{abstract}

\maketitle

Classically, Poincar\'e's inequality for a probability measure $\mu$ on $\R^{d}$ states that there is a constant $\rho>0$, such that  for any compactly supported smooth function $f$, 
\begin{equation}\label{ei:0}
\rho\Var_{\mu}(f)\le \int |\nabla f|^{2}d\mu,
\end{equation}
with the notation $\Var_{\mu}(f)=\int f^{2}\,d\mu-(\int f\,d\mu)^{2}$.
This is in fact a statement about the spectral gap of the operator $L$, whose Dirichlet form is $\Gamma(f,f)=\int |\nabla f|^{2}\,d\mu$ (and invariant measure $\mu $).
This inequality is actually one member of a family of functional inequalities which are connected by implications among them.   For example, among others, the transportation and Log-Sobolev inequalities always imply the Poincar\'e with the same constant (see e.g. \cite{Ba, OV, BL2, Vi}).  

With the boom in the interest of large dimensional phenomena, one natural question is to ask what happens with the functional inequalities in the limit.  This was studied in various forms for various measures in infinite dimensions, as for example the Wiener measures with a few samples \cite{Gross}, \cite{FU2}, \cite{Wang1}, \cite{Fang1}, \cite{Hsu1}.  The important part in dealing with these infinite dimensional objects was due to the dimension independent constants in the finite dimensional approximations.  

Important  interesting limiting objects are obtained in free probability by considering random matrices.  It is well known that properly normalized, the eigenvalue distribution
the Gaussian Unitary Ensemble converges (in mean and almost surely) to the semicircular
law.  On the other hand, applying classical functional inequalities to the distribution of random matrices in dimension $n$ and taking their limits, one obtains various functional inequalities for the semicircular.   This was done in a more general situation for the Log-Sobolev by Biane \cite{Biane2} and the transportation inequality by Hiai, Petz and Ueda \cite{HPU1} and \cite{L}.    

The interesting part of this limiting procedure is that the obtained functional inequalities in the framework of free probability have a life of their own.  As such, the goal is then
to understand them from a perspective which does not appeal to their finite dimensional approximation.   There are indeed some cases where the approximation seems hard or very unnatural.   This was the main theme of the paper \cite{LP} where several techniques from mass transportation were introduced to deal with the (one dimensional) free Log-Sobolev, transportation, HWI and Brunn-Minkowski inequalities.  

Using random matrix heuristics one can add to the family already described a new member. 
This is a free Poincar\'e inequality,  the natural limit of the Poincar\'e inequality applied to random matrices, which was discussed in \cite{LP}.  The statement for such an inequality in the case of the semicircular law $\alpha(dx)=\mathbbm{1}_{[-2,2]}(x)\frac{\sqrt{4-x^{2}}dx}{2\pi}$ is that for any smooth function $f$ on the interval $[-2,2]$,
\begin{equation}\label{ei:2}
\iint\left( \frac{f(x)-f(y)}{x-y}  \right)^{2}\omega(dx\,dy)\le \int (f' )^{2}\,d\alpha
\end{equation}
where 
\[
\omega(dx\,dy)=\mathbbm{1}_{[-2,2]}(x)\mathbbm{1}_{[-2,2]}(y)\frac{(4-xy)dxdy}{4\pi^{2}\sqrt{(4-x^{2})(4-y^{2})}} \, .
\]
Notice here that this statement has a different flavor as its classical counterpart.  In the case of  the standard Gaussian measure for example, inequality \eqref{ei:0} is the expression of the spectral gap of the Ornstein-Uhlenbeck operator.  In the free case, it was shown in \cite{LP} that \eqref{ei:2} is equivalent to 
\[
\mathcal{N}\le \mathcal{L}
\]
where $(\mathcal{L}f)(x)=-(4-x^{2})f''(x)+xf'(x)$ and $\mathcal{N}$
are respectively the Jacobi operator and the counting number operator for the orthonormal basis of Chebyshev polynomials $T_{n}(x/2)$ of $L^{2}(\beta)$, where $\beta$ is the arcsine law $\mathbbm{1}_{[-2,2]}(dx)\frac{dx}{\pi\sqrt{4-x^{2}}}$. 
At least at a first look,  we are not comparing a second order operator with a projection,
as in the classical case,  but with an integro-dfferential operator.   However, for this particular case, it is true that $\mathcal{L}=\mathcal{N}^{2}$, and thus the above comparison is essentially the spectral gap for $\mathcal{N}$.      

Another natural interpretation of \eqref{ei:2} is that the $L^{2}$ norm of the classical derivative $f'$ with respect to $\alpha$ is greater than the $L^{2}$ norm
of the non-commutative derivative $Df=\frac{f(x)-f(y)}{x-y}$ with respect to a suitable measure $\omega$. This non-commutative derivative is very natural in free probability theory and it is not the first time it appears in some form of Poincar\'e's inequality.  In fact, Biane in \cite{Biane2} sets up a Poincar\'e inequality in several non-commuting variables which in the one dimensional case amounts to 
\begin{equation}\label{eq:pb}
\Var_{\alpha}(f)\le \iint \left( \frac{f(x)-f(y)}{x-y}\right)^{2}\alpha(dx)\alpha(dy)
\end{equation}
for any $C^{1}$ function $f$ on $[-2,2]$.   This is more in the classical spirit with the role of the derivative played by the non-commutative derivative $\frac{f(x)-f(y)}{x-y}$.  At any rate this inequality can be translated into the spectral gap for the counting number operator $\mathcal{M}$ associated to the scaled Chebyshev polynomials of the second kind for the semicircular law $\alpha $. This fact
makes \eqref{ei:2} and \eqref{eq:pb} formally the same.  However, this argument does not show a more structural tie between the two versions of Poincar\'e's. A more organic appearance of the counting number operator $\mathcal{M}$ in the life of \eqref{ei:2} is revealed in  Section~\ref{s:4}.  However, the only spectral properties of $\mathcal{M}$ which contributes is the mere non-negativity.  

The main investigation of this work is actually to demonstrate that
the operator point of view emphasized towards the description of the Poincar\'e inequalities
\eqref{ei:2} and \eqref{eq:pb} and their relationships may be pushed forward to
similarly study Poincar\'e inequalities for large classes of equilibrium measures and
not only the semicircular law. In particular, this analysis reveals the suitable Poincar\'e inequality
in the free context, and allows for the connections with the other functional inequalities.

It is well known that  \eqref{ei:0} is valid for measures $\mu(dx)=e^{-V(x)}dx$, where $V$ is strong convex.   In fact, if $V$ is strong convex, say $V''(x)\ge\rho$ for some $\rho >0 $ and all $x\in\R$,
applying Poincar\'e's inequality \eqref{ei:0} to the measure $e^{-n\Tr V(X)}dX$
on Hermitian $n\times n$ matrices and functions of the form $\Phi(X)=\Tr(\phi(X))$ (for details see \cite{LP}) leads to: 
\begin{equation}\label{ei:3}
 2\rho c^{2} \iint\left( \frac{f(x)-f(y)}{x-y}  \right)^{2}\omega_{b,c}(dx\,dy)\le\int (f')^{2}d\mu_{V}
\end{equation}
for any $C^{1}$ function on the support  $[-2c+b,2c+b]$ of $\mu_{V}$.  Here $\mu_{V}$ is the equilibrium measure (i.e. the minimizer) of 
\begin{equation}\label{ei:7}
E_{V}(\mu)=\int V\,d\mu-\iint \log|x-y|\mu(dx)\mu(dy)
\end{equation}
over the sef of all probability measures on $\R$.   It is well known (see for example \cite{ST}) that the support of $\mu_{V}$ is one interval in the case $V$ is convex.  The measure  $\omega_{b,c}$
on the left hand side of
\eqref {ei:3} is just a linear rescaling of the measure $\omega$ defined above, precisely 
\begin{equation}\label{ei:4}
\omega_{b,c}(dx\,dy)=\mathbbm{1}_{[-2c+b,2c+b]}(x)\mathbbm{1}_{[-2c+b,2c+b]}(y)\frac{(4c^{2}-(x-b)(y-b))dxdy}{4c^{2}\pi^{2}\sqrt{(4c^{2}-(x-b)^{2})(4c^{2}-(y-b)^{2})}} \, .
\end{equation}
The point is now, \eqref{ei:3} is a well-defined notion on its own for any given
probability measure $\mu$ on the interval $[-2c+b,2c+b]$. It defines the canonical free
Poincar\'e inequality which will be investigated in this
work. As it is in the case of \eqref{ei:2} for the semicircular law, this inequality gravitates around the counting number operator $\mathcal{N}$. The investigation is driven by a lemma by Haagerup
which was extensively used in \cite{GP} to deal with the minimization of the logarithmic
energy with external fields) providing an analytic description of the number operator $\mathcal{N}$ as
$$  (\mathcal{N}\phi)(x) =\int y\phi'(y)\beta(dy)  +x\int \phi'(y)\beta(dy)
       - (4-x^{2})\int \frac{\phi'(x)-\phi'(y)}{x-y} \, \beta(dy) $$     
which connects with free derivatives. In particular, this description produces concise
and efficient interpretations of the equilibrium measure $\mu_V$ and logarithmic energy $E_V$
associated to an external field $V$ of independent interest. With these tools, the
free Poincar\'e inequality for a measure $\mu $ may then be described at the operator
level as the comparison of $\mathcal{N}$ with the operator with Dirichlet form
$\int (f')^2 d\mu$. 

In \cite{LP}, another version of Poincar\'e's inequality inspired by Biane's version of \eqref{eq:pb}
was presented,
which states that for $\mu$ with compact support, there is a constant $\rho>0$ such that  
\begin{equation}\label{eq:pb2}
\rho\Var_{\mu}(f)\le \iint \left( \frac{f(x)-f(y)}{x-y}\right)^{2}\mu(dx)\mu(dy)
\end{equation}
as long as $f$ is $C^{1}(\R)$. Besides the example of the semicircular law, we  
do not know however if there is any (interesting) connection between
the two Poincar\'e inequalities \eqref{ei:3} and \eqref{eq:pb2}. 
As we will show, the Poincar\'e inequality \eqref{ei:3} will be justified by its
connection with the transportation and  Log-Sobolev inequalities
(which does not seem of the same nature for \eqref{eq:pb2}).

Indeed, once the proper free Poincar\'e inequality \eqref{ei:3} has been identified, the next purpose is to investigate its relationships with the traditional free functional inequalities such as
transportation and Log-Sobolev inequalities. The free transportation inequality associated to a potential $V$ claims that there is a $\rho>0$ with the property that 
\begin{equation}\label{ei:5}
\rho W_{2}^{2}(\nu,\mu_{V})\le E_{V}(\nu)-E_{V}(\mu_{V})
\end{equation}
for any other probability measure $\nu$ on the real line.  Free  Log-Sobolev states that there is a $\rho>0$ so that for any other (sufficiently nice) probability measure $\nu$,
\begin{equation}\label{ei:6}
E_{V}(\nu)-E_{V}(\mu_{V})\le \frac{1}{4\rho}\int (H\nu-V')^{2}d\nu
\end{equation}
where $H\nu=p.v. \int \frac{2}{x-y}\nu(dy)$ is the Hilbert transform of the measure $\nu$.   
In this paper we show that under some mild assumptions, the transportation and Log-Sobolev inequalities
imply the free Poincar\'e inequality \eqref {ei:3}. It should be pointed out that these implications are easy or standard
in the classical case. That the Poincar\'e inequality follows from the Log-Sobolev is obtained
by a simple Taylor expansion on (classical) entropy (see e.g. \cite {Ba, Vi}). The implication from the transportation
inequality is a bit more involved, the simplest argument going through Hamilton-Jacobi equations
(\cite {BL2, Vi}).
Actually, what the classical case puts forward is the necessity of suitable perturbation
properties of both the logarithmic energy and equilibrium measure in the free context.
This will be achieved in the second part of this paper.
At the heart of the argument is a perturbation argument for the logarithmic energy
$E_{V}$, which is given by the counting number operator $\mathcal{N}$, the same
one which plays the key role in understanding the free Poincar\'e inequality. Again, this perturbation
property might be of independent interest.   
 
Here is how the paper is organized.  In Section~\ref{s:2} we introduce the preliminary material, namely the logarithmic potentials, Chebyshev polynomials and we briefly discuss Haagerup's Lemma. We
also introduce and study several related operators, the most important one being the counting number
operator $\mathcal{N}$ and its analytic description.

Section~\ref{s:3} is the one introducing the Poincar\'e inequality 
\eqref {ei:3} and several associated properties, while Section~\ref{s:4}
investigates various equivalent characterizations of this.  The main ones are equivalent via some sort of duality, which is somewhat reminiscent of the duality associated to the Monge-Kantorovich distance
in the theory of mass transportation.     

In Section~\ref{s:5} we give the perturbation results which is the backbone for
the connection between the other functional inequalities and Poincar\'e.  This last
connection is discussed in Section~\ref{s:fineq} together with a discussion about why the perturbation used in the classical case to go from the Log-Sobolev and transportation is not enough.

\section{Preliminaries}\label{s:2}

In this section we introduce some basic notions we are going to use in this paper.

A \emph{potential} on a  closed subset $S$ of the real line is simply a function $V:S\to\R$.  For our investigation of the logarithmic potentials with external fields, we will assume that $V$ is of class $C^{3}$ on the interior of $S$ and that if $S$ is unbounded, 
\[
\lim_{|x|\to\infty} V(x)-2\log|x|=+\infty.  
\]
We will call such a potential admissible.    

For a probability measure $\mu$ the \emph{logarithmic energy with external field $V$} is given by 
\begin{equation}\label{ep:lp}
E_{V}(\mu)=\int Vd\mu-\iint \log|x-y|\mu(dx)\mu(dy).
\end{equation}
It is known that given a closed subset $S$ and an admissible potential $V$ (see \cite{ST} or \cite{Deift1})  there is a unique minimizer $\mu_{V}$ in the set of probability measures on $S$.  In addition this measure also has compact support.   We will denote for simplicity $E_{V}=E_{V}(\mu_{V})$.  
The support of the measure $\mu$ is denoted by $\supp{\mu}$.

The equilibrium measure $\mu_{V}$ of \eqref{ep:lp} on the set $S$ (cf. 
\cite[Thm.I.1.3]{ST}) is characterized by 
\begin{equation}\label{ep:var}
\begin{split}
V(x)&\ge 2\int \log|x-y|\mu(dy)+C \quad\text{quasi-everywhere on }S \\ 
V(x)&= 2\int \log|x-y|\mu(dy)+C 
\quad\text{quasi-everywhere on}\:\:\supp{\mu}. \\ 
\end{split}
\end{equation} 
For the definition of the notion of quasi-everywhere, we refer the reader to \cite{ST}.

What we will need from this is in particular that the equality on $\supp \mu$ is 
almost surely with respect to any probability measure of finite logarithmic energy.  

If $(X,\mathcal{X})$, $(Y,\mathcal{Y})$ are two measurable spaces,  $\mu$ is a measure on $X$ and $\phi:X\to Y$, is a measurable map,  we set $\phi_{\#}\mu$ to stand for the push forward measure
\[
(\phi_{\#}\mu)(A)=\mu(\phi^{-1}(A))
\]
for any $A\in\mathcal{Y}$.

It is easy to verify that changing the variable of integration to $x\to c x+b$ 
and $y\to c y+b$, with $c\ne0$, setting $\ell_{b,c}(x)=(x-b)/c$ and 
$ \mu_{c,b}=(\ell_{b,c})_{\#}\mu$, the following holds
\begin{equation}\label{ep:4}
E_{V}(\mu)=\int V(cx+b)\mu_{b,c}(dx)-\iint\log|cx-cy|\mu_{b,c}(dx)
\mu_{b,c}(dy)= E_{V(\ell_{b,c}^{-1})-\log(c)}(\mu_{b,c})
=E_{V(\ell_{b,c}^{-1})}(\mu_{b,c})-\log c
\end{equation}
which in turn results with 
\begin{equation}\label{ep:5}
\notag E_{V}=E_{V(\ell_{b,c}^{-1})-\log(c)}= E_{V(\ell^{-1}_{b,c})}-\log(c).
\end{equation}

\subsection{Connection with Chebyshev Polynomials}

Recall that the {\em Chebychev polynomials of the first kind}  $T_n(x)$ are defined by
\begin{equation}\label{ep:T}
T_n(\cos\theta)=\cos(n\theta).
\end{equation}
 Alternatively, they are given by the recursion
relation
\[
T_{n+1}(x)=2xT_n(x)-T_{n-1}(x), \qquad T_0(x)=1, \,\, T_1(x)=x
\]
with the generating function
\begin{equation}\label{ep:gfT}
\sum_{n = 0}^\infty r^{n}T_{n}(x)=\frac{1-rx}{1-2rx+r^{2}} \, , \quad  r\in (-1,1).
\end{equation}
 If we take $\tilde{T}_{0}=T_{0}$ and $\tilde{T}_{n}(x)=\sqrt{2}T_{n}(x)$, then $\{\tilde{T}_{n}\}_{n\ge0}$ is the sequence of orthogonal polynomials for the {\em arcsine law}  $\mathbbm{1}_{(-1,1)}(x)\frac{dx}{\pi\sqrt{1-x^{2}}}$. 

The \emph{Chebyshev polynomials of second kind} $U_{n}$ are defined by 
\begin{equation}\label{ep:U}
U_{n}(\cos \theta)=\frac{\sin (n+1)\theta}{\sin\theta}.
\end{equation}
These satisfy the recurrence 
\[
U_{n+1}(x)=2xU_{n}(x)-U_{n-1}(x), \qquad  U_{0}=1, \:U_{1}=2x
\]
and the generating function is 
\begin{equation}\label{ep:gfU}
\sum_{n = 0} ^\infty r^{n}U_{n}(x)=\frac{1}{1-2rx+r^{2}} \, , \quad  r\in(-1,1).
\end{equation}

These are the orthogonal polynomials for the {\em semicircular distribution}
$\mathbbm{1}_{[-1,1]}(x)\frac{2\sqrt{1-x^{2}}dx}{\pi}$. 

The main connection between the Chebyshev polynomials of the first and second kind is given by
\begin{equation}\label{ep:TU}
T_{n}'(x)=nU_{n-1}(x).  
\end{equation}

In the sequel we will use the following notation 
\[
\phi_{n}(x)=T_{n}\left(\frac{x}{2} \right) \quad
    \text{ and } \quad \psi_{n}(x)=U_{n}\left(\frac{x}{2} \right) \quad \text{ for }n\ge0.
\]
We mention that these are the orthogonal polynomials for the arcsine and semicircular on $[-2,2]$.  

It is easy to check the following relations between $\phi_{n}$ and $\psi_{n}$:
\begin{equation}\label{eq:phipsi}
\begin{split}
2\phi_{n}(x)\phi_{m}(x)&=\phi_{|n-m|}(x)+\phi_{n+m}(x), \quad n,m\ge0\\
2\psi_{n}(x)\phi_{m}(x)&={\mathrm{sign}}(n+1-m)\psi_{|n+1-m|-1}(x)+\psi_{n+m}(x), \quad n,m\ge0\\
\frac{(4-x^{2})}{2} \, \psi_{n}(x)\psi_{m}(x)&=\phi_{|n-m|}(x)-\phi_{n+m+2}(x),\quad n,m\ge0,\\
\end{split}
\end{equation}
where here and throughout this paper, ${\mathrm{sign}}(x)=1$ for $x>0$, ${\mathrm{sign}}(x)=-1$ for $x<0$ and ${\mathrm{sign}}(x)=0$ for $x=0$.  

The following Lemma which will play an important role in the subsequent analysis 
appears in some seminar notes of Haagerup \cite{Ha}.

\begin{lemma}[Haagerup]\label{l:1}
\begin{enumerate}
 
\item For any real $x,y\in [-2,2]$, $x\ne y$, we have
\begin{equation}
\log|x-y|=-\sum_{n=1}^{\infty}\frac{2}{n}\phi_{n}(x)\phi_{n}(y) \label{ep:1}
\end{equation}
where the series here are convergent on $x\ne y$.  
\item For $x>2$ and  $y\in[-2,2]$, a similar expansion takes place, 
\[
\log|x-y|=\log\left|\frac{x+\sqrt{x^{2}-4}}{2} \right|-\sum_{n=1}^{\infty}
\frac{2}{n}\left( \frac{x-\sqrt{x^{2}-4}}{2} \right)^{n}\phi_{n}(y)
\]
where the series is absolutely convergent.  

\item The logarithmic potential of a 
probability  measure $\mu $ on $[-2,2]$ is given by 
\begin{equation}\label{ep:2}
\int \log|x-y|\mu(dx)= - \sum_{n=1}^{\infty}\frac{2}{n} \, \phi_{n}(x)
\int \phi_{n}(y)\mu(dy)
\end{equation}
where this series makes sense pointwise.  Therefore, the logarithmic energy of the 
measure $\mu$ is given by
\begin{equation}\label{ep:3}
\iint \log|x-y|\mu(dx)\mu(dy) = -\sum_{n=1}^{\infty}\frac{2}{n}\left(
\int \phi_{n}(x)\mu(dx)\right)^{2}.
\end{equation}
In particular $\iint \log|x-y|\mu(dx)\mu(dy)$ is finite if and only if 
$\sum_{n=1}^{\infty}\frac{2}{n}\left(\int \phi_{n}(x)
\mu(dx)\right)^{2}$ is finite.
\end{enumerate}
\end{lemma}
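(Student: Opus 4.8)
The engine behind the whole statement is the classical Fourier expansion
\[
\sum_{n=1}^{\infty}\frac{\cos nt}{n}=-\log\!\Bigl(2\bigl|\sin\tfrac{t}{2}\bigr|\Bigr),\qquad t\not\equiv 0\ (\mathrm{mod}\ 2\pi),
\]
which I would obtain by taking real parts in $\sum_{n\ge1}z^{n}/n=-\log(1-z)$ at $z=re^{it}$, namely $\sum_{n\ge1}\tfrac{r^{n}}{n}\cos nt=-\tfrac12\log(1-2r\cos t+r^{2})$, and letting $r\uparrow1$; the left-hand series converges for $t\not\equiv0$ by Dirichlet's test (bounded partial sums of $\cos nt$, coefficients $\tfrac1n\downarrow0$), so Abel's theorem identifies the limit with the sum. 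For part (1), I would substitute $x=2\cos\theta$ and $y=2\cos\varphi$, so that $\phi_{n}(x)=\cos n\theta$, $\phi_{n}(y)=\cos n\varphi$ and $2\phi_{n}(x)\phi_{n}(y)=\cos n(\theta+\varphi)+\cos n(\theta-\varphi)$; the right-hand side of \eqref{ep:1} then becomes the sum of two copies of the Fourier series above, evaluated at $\theta+\varphi$ and $\theta-\varphi$, i.e. $-\log\bigl(2|\sin\tfrac{\theta+\varphi}{2}|\bigr)-\log\bigl(2|\sin\tfrac{\theta-\varphi}{2}|\bigr)$. Since $|2\cos\theta-2\cos\varphi|=4\,|\sin\tfrac{\theta+\varphi}{2}|\,|\sin\tfrac{\theta-\varphi}{2}|$, the left-hand side equals exactly this, and convergence on $x\ne y$ (equivalently $\theta\pm\varphi\not\equiv0$) is the Dirichlet-test convergence just invoked.

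For part (2) I would use the Joukowski change of variable $x=w+w^{-1}$ with $w=\tfrac12\bigl(x+\sqrt{x^{2}-4}\bigr)>1$, so that $w^{-1}=\tfrac12\bigl(x-\sqrt{x^{2}-4}\bigr)$, together with $y=2\cos\varphi$. The elementary factorization $x-y=w+w^{-1}-(e^{i\varphi}+e^{-i\varphi})=w^{-1}(w-e^{i\varphi})(w-e^{-i\varphi})$ gives
\[
\log|x-y|=\log w+\log\bigl|1-w^{-1}e^{i\varphi}\bigr|+\log\bigl|1-w^{-1}e^{-i\varphi}\bigr| ,
\]
and since $|w^{-1}e^{\pm i\varphi}|=w^{-1}<1$, each of the last two terms expands as the absolutely convergent series $-\sum_{n\ge1}\tfrac1n w^{-n}\cos n\varphi$; adding them and using $\phi_{n}(y)=\cos n\varphi$ produces $-\sum_{n\ge1}\tfrac2n w^{-n}\phi_{n}(y)$, which is the claimed expansion, with $\log w=\log\bigl|\tfrac12(x+\sqrt{x^{2}-4})\bigr|$.

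Part (3) follows from (1) by integration, and this interchange of summation and integration \emph{at the boundary value $r=1$} is the only delicate point, because $\log|x-y|$ is unbounded below near the diagonal. I would regularize: for $r\in(0,1)$ set $g_{r}(x,y)=\sum_{n\ge1}\tfrac{2r^{n}}{n}\phi_{n}(x)\phi_{n}(y)$, a series converging uniformly on $[-2,2]^{2}$ because $|\phi_{n}|\le1$; Fubini then gives $\iint g_{r}\,\mu(dx)\mu(dy)=\sum_{n\ge1}\tfrac{2r^{n}}{n}\bigl(\int\phi_{n}\,d\mu\bigr)^{2}$, and likewise $\int g_{r}(x,y)\,\mu(dy)=-\sum_{n\ge1}\tfrac{2r^{n}}{n}\phi_{n}(x)\int\phi_{n}\,d\mu$. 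By part (1) and Abel's theorem, $g_{r}(x,y)\to-\log|x-y|$ as $r\uparrow1$ for every $(x,y)$ (with value $+\infty$ on the diagonal). Writing $g_{r}$ in closed form as $-\tfrac12\log\bigl[(1-2r\cos(\theta+\varphi)+r^{2})(1-2r\cos(\theta-\varphi)+r^{2})\bigr]$ yields two elementary bounds: each factor equals $|1-re^{i\cdot}|^{2}\le(1+r)^{2}\le4$, so $\log4+g_{r}\ge0$; and $1-2r\cos t+r^{2}=(1-r)^{2}+r(2-2\cos t)\ge r(2-2\cos t)$, which combined with the identity $(2-2\cos(\theta+\varphi))(2-2\cos(\theta-\varphi))=(x-y)^{2}$ gives $g_{r}(x,y)\le-\log|x-y|-\log r$. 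The first bound together with Fatou's lemma yields $\iint\log|x-y|\,\mu(dx)\mu(dy)\ge-\sum_{n\ge1}\tfrac2n\bigl(\int\phi_{n}\,d\mu\bigr)^{2}$, the series being the monotone $r\uparrow1$ limit of $\sum_{n\ge1}\tfrac{2r^{n}}{n}(\int\phi_{n}\,d\mu)^{2}$ and so finite or $+\infty$. For the reverse inequality: if $\iint\log|x-y|\,\mu(dx)\mu(dy)>-\infty$ then $-\log|x-y|-\log r_{0}$ is $\mu\otimes\mu$-integrable for a fixed $r_{0}\in(0,1)$, and the reverse Fatou lemma (using the second bound for $r\ge r_{0}$) gives $\sum_{n\ge1}\tfrac2n(\int\phi_{n}\,d\mu)^{2}\le-\iint\log|x-y|\,\mu(dx)\mu(dy)$, while if $\iint\log|x-y|\,\mu(dx)\mu(dy)=-\infty$ the first inequality already forces the series to be $+\infty$. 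This establishes \eqref{ep:3} and the stated equivalence of finiteness, and \eqref{ep:2} is the same argument with $\mu\otimes\mu$ replaced by $\delta_{x}\otimes\mu$. I expect the $r\uparrow1$ passage in part (3) — with its split into the finite- and infinite-energy cases — to be the only real obstacle; parts (1) and (2) reduce to the two trigonometric computations above.
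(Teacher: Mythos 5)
Your proof is correct, and for part (1) --- the only part the paper actually proves (it refers to \cite{GP} for the rest) --- it is the same argument: substitute $x=2\cos\theta$, $y=2\cos\varphi$, factor $|x-y|=4\bigl|\sin\tfrac{\theta+\varphi}{2}\bigr|\bigl|\sin\tfrac{\theta-\varphi}{2}\bigr|$, and expand $\log|1-e^{it}|$ via the boundary values of $\log(1-z)=-\sum_{k\ge1}z^{k}/k$; your Abel/Dirichlet justification of that boundary expansion is more careful than the paper's one-line appeal to it. Your parts (2) and (3) are additions rather than alternatives: (2) via the Joukowski substitution $x=w+w^{-1}$, $w>1$, is the standard argument and is correct, and (3) --- regularizing with $g_{r}$, then using the uniform bound $g_{r}\ge-\log 4$ with Fatou in one direction and the domination $g_{r}\le-\log|x-y|-\log r$ with reverse Fatou in the other --- is a clean and complete justification of the interchange at $r=1$, covering both the finite- and infinite-energy cases; since the terms $\tfrac{2}{n}\bigl(\int\phi_{n}\,d\mu\bigr)^{2}$ are nonnegative, Abel summability there is genuine convergence, which settles \eqref{ep:3} and the finiteness equivalence. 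Two harmless sign slips to fix: in part (1), $\log|x-y|$ equals $+\log\bigl(2|\sin\tfrac{\theta+\varphi}{2}|\bigr)+\log\bigl(2|\sin\tfrac{\theta-\varphi}{2}|\bigr)$, i.e.\ \emph{minus} the sum of the two Fourier series; and with your definition of $g_{r}$ the identity $\int g_{r}(x,y)\,\mu(dy)=\sum_{n\ge1}\tfrac{2r^{n}}{n}\phi_{n}(x)\int\phi_{n}\,d\mu$ should carry a plus sign. Finally, for \eqref{ep:2} the terms $\phi_{n}(x)\int\phi_{n}\,d\mu$ are not of one sign, so your argument identifies the potential with the Abel sum of that series; this is consistent with the lemma's loose phrasing ``makes sense pointwise,'' but ordinary convergence would need an extra Tauberian or direct step.
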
  

\begin{proof}
A full scale proof is given in \cite{GP}, here we only outline the main calculation leading to \eqref{ep:1}.  
Write  $x=2\cos u$ and $y=2\cos v$, and observe 
\[
x-y=2(\cos u-\cos v)=4\sin\left(\frac{u+v}{2}\right)\sin\left(\frac{u-v}{2}
\right).
\]
Hence
\begin{align*}
\log|x-y|& =\log \left|2\sin\left(\frac{u+v}{2} \right) \right| 
+\log \left|2\sin\left(\frac{u-v}{2} \right) \right| \\ 
&=\log|1-e^{i(u+v)}|+\log|1-e^{i(u-v)}|\\ 
& = {\rm Re} \left(\log(1-e^{i(u+v)})+\log(1-e^{i(u-v)}) \right) \\ 
& = -\sum_{n=1}^{\infty}\frac{1}{n} \,  {\rm Re}\left( e^{in(u+v)}+e^{in(u-v)} \right) \\ 
& = -\sum_{n=1}^{\infty}\frac{1}{n} \left(\cos(n(u+v))+\cos(n(u-v)) \right) \\ 
& = -\sum_{n=1}^{\infty}\frac{2}{n} \, \cos (nu)\cos(nv) \\
& = -\sum_{n=1}^{\infty}\frac{2}{n} \, \phi_{n}(x)\phi_{n}(y).
\end{align*}
Notice that in the middle of this we used the fact that for a complex number $z$, with $|z|=1$, $z\ne 1$, the usual logarithmic formula which computes the logarithm is still valid: 
\[
\log(1-z)=-\sum_{k\ge1}\frac{z^{k}}{k} \, .  
\]\qedhere  
\end{proof}

It is this simple lemma which gives the theme of dealing with logarithmic energies of measures by reducing them via rescaling to measures on the interval $[-2,2]$. The next statement is a simple consequence.
 
\begin{corollary}\label{c:1}
 If $\beta(dx)=\mathbbm{1}_{[-2,2]}(x)\frac{dx}{\pi\sqrt{4-x^{2}}}$ is the 
arcsine law of the interval $[-2,2]$, then 
\begin{equation}\label{ep:6}
\int \log|x-y|\,\beta(dy)=\begin{cases} 
0,& |x|\le2\\
\log\frac{|x|+\sqrt{x^{2}-4}}{2} ,& |x|>2.
\end{cases}
\end{equation}
If $\mu$ is a signed measure on $[-2,2]$ with finite total variation and 
finite logarithmic energy, then 
\begin{equation}\label{ep:7}
\int \log|x-y|\mu(dy)=c \, \, \, 
\text{ almost everywhere for all }  \,  x\in[-2,2] 
\end{equation}
 if and only if $\mu(dx)=\beta(dx)$.  Here,  ``almost everywhere'' is understood with respect 
to the Lebesgue measure.  Additionally,  the constant $c$ must be $0$.  
\end{corollary}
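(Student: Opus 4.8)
The plan is to read both assertions off Haagerup's Lemma~\ref{l:1}. For the identity \eqref{ep:6}, when $|x|\le 2$ I would apply the pointwise expansion \eqref{ep:2} with $\mu=\beta$: every coefficient $\int\phi_n(y)\,\beta(dy)$ with $n\ge1$ vanishes because $\phi_n=T_n(\cdot/2)$ is orthogonal to the constant $\phi_0\equiv1$ in $L^2(\beta)$, so the entire series is $0$. When $x>2$ I would instead use part (2) of Lemma~\ref{l:1}: integrating that expansion against $\beta$ again annihilates all the $\phi_n(y)$-terms and leaves exactly $\log\frac{x+\sqrt{x^2-4}}{2}$. The case $x<-2$ then follows from the symmetry $y\mapsto -y$ of $\beta$, which gives $\int\log|x-y|\,\beta(dy)=\int\log|(-x)-y|\,\beta(dy)$ and reduces it to the previous case applied to $-x>2$. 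In particular the ``if'' direction of the second assertion is immediate: if $\mu=\beta$ then by \eqref{ep:6} the potential is identically $0$ on $[-2,2]$, so \eqref{ep:7} holds with $c=0$.

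For the ``only if'' direction, assume $\int\log|x-y|\,\mu(dy)=c$ for Lebesgue-a.e.\ $x\in[-2,2]$ and set $a_n=\int\phi_n\,d\mu$; these are finite since $|\phi_n|\le1$ on $[-2,2]$ and $\mu$ has finite total variation. The key computation is that of the Chebyshev--Fourier coefficient of the potential against $\beta$, carried out in two ways. On one side it equals $c\int\phi_m\,d\beta=0$ for every $m\ge1$. On the other side, by \eqref{ep:1} the function $x\mapsto\log|x-y|$ lies in $L^2(\beta)$ for each fixed $y\in[-2,2]$, with $L^2(\beta)$-convergent expansion $-\sum_{n\ge1}\frac2n\phi_n(y)\phi_n(x)$, so its coefficient against $\phi_m$ is $-\frac1m\phi_m(y)$ (using $\int\phi_n\phi_m\,d\beta=\frac12\delta_{nm}$ for $n,m\ge1$). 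Since the logarithmic potential of $\beta$ is bounded on $[-2,2]$ one has $\iint|\log|x-y||\,\beta(dx)\,|\mu|(dy)<\infty$, so Fubini's theorem applies and gives
\[
0 \;=\; c\int\phi_m\,d\beta \;=\; \int\left(\int\log|x-y|\,\phi_m(x)\,\beta(dx)\right)\mu(dy) \;=\; -\frac1m\int\phi_m(y)\,\mu(dy) \;=\; -\frac1m\,a_m ,
\]
whence $a_m=0$ for all $m\ge1$.

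To turn this into an identification of $\mu$, note that $\{\phi_m\}_{m\ge0}$ is a basis of the polynomials and $\int\phi_m\,d\beta=\delta_{m0}$, so every polynomial $p$ satisfies $\int p\,d\mu=\mu([-2,2])\int p\,d\beta$; by Stone--Weierstrass the polynomials are dense in $C([-2,2])$, and a finite signed Borel measure is determined by its action on $C([-2,2])$, hence $\mu=\mu([-2,2])\,\beta$, i.e.\ $\mu$ is a scalar multiple of $\beta$ (and equal to $\beta$ precisely when it has unit total mass). Finally, substituting $\mu=\lambda\beta$ back into $\int\log|x-y|\,\mu(dy)$ and invoking \eqref{ep:6} shows the potential is identically $0$, so necessarily $c=0$.

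The only genuinely technical points sit in the second paragraph: one must check that $x\mapsto\log|x-y|$ really belongs to $L^2(\beta)$ and that its Haagerup expansion converges there (so that term-by-term integration against $\phi_m$ is legitimate; this amounts to $\sum_n n^{-2}\phi_n(y)^2<\infty$, which is clear since $|\phi_n|\le1$), and that $\int|\log|x-y||\,\beta(dx)$ is bounded uniformly in $y\in[-2,2]$ (via the substitution $x=2\cos\theta$, which reduces it to the integrability of $\log|\sin|$), so as to produce the dominating function for Fubini. Everything else — the orthogonality $\int\phi_n\,d\beta=0$ for $n\ge1$, the density argument, and the back-substitution forcing $c=0$ — is routine.
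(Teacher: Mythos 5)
Your proof is correct and is exactly the argument the paper intends: the corollary is introduced as a ``simple consequence'' of Haagerup's Lemma, and you supply the details by reading off Chebyshev--Fourier coefficients against $\beta$ and matching them, the same coefficient-identification used in the proof of Theorem~\ref{t:1}. Your remark that the argument only yields $\mu=\mu([-2,2])\,\beta$ correctly pinpoints the implicit normalization (total mass one) needed for the ``only if'' direction as literally stated, and your Fubini/$L^2(\beta)$-convergence justifications are the right technical underpinnings.
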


We define the following probability measures related to the interval $[-2c+b,2c+b]$ which are used throughout this note.  
\begin{equation}\label{e:abo}
\begin{split}
\alpha_{b,c}(dx)&=\mathbbm{1}_{[-2c+b,2c+b]}(x)\frac{\sqrt{4c^{2}-(x-b)^{2}}dx}{2\pi c^{2}}  \\
\beta_{b,c}(dx)&=\mathbbm{1}_{[-2c+b,2c+b]}(x)\frac{dx}{\pi \sqrt{4c^{2}-(x-b)^{2}}}\\
\omega_{b,c}(dx\,dy)&=\mathbbm{1}_{[-2c+b,2c+b]}(x)\mathbbm{1}_{[-2c+b,2c+b]}(y)\frac{(4c^{2}-(x-b)(y-b))dxdy}{4c^{2}\pi^{2}\sqrt{(4c^{2}-(x-b)^{2})(4c^{2}-(y-b)^{2})}} \, .
\end{split}
\end{equation}
We mention that $\alpha_{b,c}$, respectively $ \beta_{b,c}$, is semicircular, respectively arcsine on $[-2c+b,2c+b]$.  To be completely consistent, $\alpha_{b,c}$ is defined on the closed interval $[-2c+b,2c+b]$, while 
$\beta_{b,c}$  and $\omega_{b,c}$ are properly defined on the open sets $(-2c+b,2c+b)$ and $(-2c+b,2c+b)\times (-2c+b,2c+b)$ respectively.   On the other hand, as we will integrate functions on $[-2c+b,2c+b]$, and all these measures are absolutely continuous with respect to the Lebesgue measure on the real axis it does not matter if the integrals are on the open or closed intervals (or product of such).  Henceforth,  we set the scene for all these measures to be defined on the closed interval $[-2c+b,2c+b]$ (or $[-2c+b,2c+b]\times [-2c+b,2c+b]$ for $\omega_{b,c}$). 
  
For simplicity set $\alpha=\alpha_{0,1}$, $\beta=\beta_{0,1}$ and $\omega=\omega_{0,1}$, which are probabilities on $[-2,2]$ or $[-2,2]\times[-2,2]$.   
Notice the simple rescaling shows that  $\alpha_{b,c}=(\ell^{-1}_{b,c})_{\#}\alpha$ and similarly $\beta_{b,c}=(\ell^{-1}_{b,c})_{\#}\beta$ while  $((\ell^{2}_{b,c})^{-1})_{\#}\omega_{b,c}=\omega$ with $\ell^{2}_{b,c}:\R^{2}\to\R^{2}$, $\ell^{2}_{b,c}(x,y)=(\ell_{b,c}(x),\ell_{b,c}(y))$.

Throughout this paper we use $\langle \cdot, \cdot \rangle_{\gamma}$ to denote the scalar product in $L^{2}(\gamma)$ and reserve $\langle \cdot,\cdot \rangle$ for the inner product in $L^{2}(\beta)$.

Using Lemma~\ref{l:1} we prove the first result of this note which appears partially in \cite{GP}.  It will naturally lead to the operator formulation of the Poincar\'e inequality next.    

\begin{theorem}\label{t:1}
Assume that $V$  is a $C^{3}$ function on $[-2,2]$ and $A\in\R$ a constant.  
Then,  there is a unique signed measure $\mu$ on $[-2,2]$ of finite total 
variation which solves 
\begin{equation}\label{e:0}
\begin{cases}
2\int \log|x-y|\mu(dy)=V(x)+C  \, \, \, \text{ almost everywhere for } \, x\in[-2,2],\\
\mu([-2,2])=A
\end{cases}
\end{equation}
where almost everywhere is with respect to the Lebesgue measure on $[-2,2]$. The solution $\mu$ is given by $\mu(dx)=u(x)\beta(dx)$ where
\begin{equation}\label{e:sol}
u(x)=A-\frac{1}{2}\int_{-2}^{2}yV'(y)\beta(dy) 
-\frac{x}{2}\int_{-2}^{2}V'(y)\beta(dy)
+ \frac{4-x^{2}}{2}\int_{-2}^{2} \frac{V'(x)-V'(y)}{x-y}
\beta(dy).
\end{equation}
In addition, the constant $C$ must be given by  $C=-\int_{-2}^{2}
V(x)\,\beta(dx)$. 

Moreover, for any $C^{1}$ function $\phi$ on $[-2,2]$ we have that 
\begin{equation}\label{e:39}
\int \phi(x)\mu(dx)=A\int\phi(x)\beta(dx)
-\iint
\frac{(V(x)-V(y))(\phi(x)-\phi(y))}{(x-y)^{2}} \, \omega(dx\,dy).
\end{equation}
\end{theorem}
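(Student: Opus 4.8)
The whole argument is phrased in the orthogonal Chebyshev basis $\{\phi_n\}_{n\ge0}$ of $L^2(\beta)$. Write $V=v_0+\sum_{n\ge1}v_n\phi_n$, with $v_0=\int V\,d\beta$ and $v_n=2\int V\phi_n\,d\beta$; since $V\in C^3$ one has $v_n=O(n^{-3})$. If $\mu$ is any finite signed measure solving \eqref{e:0}, then its logarithmic potential equals the bounded function $\tfrac12(V+C)$, so $\mu$ has finite logarithmic energy, and Lemma~\ref{l:1}(3) gives $-\sum_{n\ge1}\tfrac2n\phi_n(x)\,\hat\mu_n=\tfrac12\bigl(V(x)+C\bigr)$, where $\hat\mu_n=\int\phi_n\,d\mu$; the series converges in $L^2(\beta)$ since $\sum\hat\mu_n^2/n<\infty$. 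Comparing Fourier--Chebyshev coefficients forces $C=-v_0=-\int V\,d\beta$ and $\hat\mu_n=-\tfrac n4 v_n$ for $n\ge1$, while the mass condition gives $\hat\mu_0=A$. All moments of $\mu$ against polynomials are thus determined, whence uniqueness (equivalently, the difference of two solutions is annihilated by Corollary~\ref{c:1}).

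For existence and for the explicit formula, I would check that the $u$ of \eqref{e:sol} has exactly the Chebyshev expansion $u=A-\sum_{n\ge1}\tfrac n2 v_n\phi_n$; since $u$ is continuous on $[-2,2]$, $\mu:=u\,\beta$ is then a finite signed measure of finite logarithmic energy whose moments are $\hat\mu_0=A$ and $\hat\mu_n=\tfrac12 u_n=-\tfrac n4 v_n$, so by Lemma~\ref{l:1}(3) it satisfies $2\int\log|x-y|\,u(y)\beta(dy)=\sum_{n\ge1}v_n\phi_n(x)=V(x)-\int V\,d\beta$, i.e.\ \eqref{e:0}. To compute those coefficients I would use $\phi_n'=\tfrac n2\psi_{n-1}$ to write $V'=\sum_{m\ge0}\tfrac{(m+1)v_{m+1}}{2}\,\psi_m$, then combine the generating function $\sum_m r^m\psi_m(x)=(1-rx+r^2)^{-1}$ with the Poisson-kernel integral $\int(1-ry+r^2)^{-1}\beta(dy)=(1-r^2)^{-1}$ to evaluate $\int\frac{\psi_m(x)-\psi_m(y)}{x-y}\beta(dy)$, and finally multiply by $\tfrac{4-x^2}{2}$ using $\tfrac{4-x^2}{2}\psi_m(x)=\phi_m(x)-\phi_{m+2}(x)$ (the $m'=0$ instance of \eqref{eq:phipsi}); the resulting sum telescopes, giving $\tfrac{4-x^2}{2}\int\frac{\psi_m(x)-\psi_m(y)}{x-y}\beta(dy)$ equal to $1-\phi_{m+1}$ for $m$ odd and $\phi_1-\phi_{m+1}$ for $m$ even. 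The two remaining constants in \eqref{e:sol}, $\int V'\,d\beta$ and $\int yV'\,d\beta$, are computed from $\int\psi_m\,d\beta=\mathbbm{1}_{\{m\text{ even}\}}$ and $x\psi_m=\psi_{m+1}+\psi_{m-1}$; assembling everything, the ``boundary'' constants cancel and the claimed expansion of $u$ drops out.

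It remains to prove \eqref{e:39}; I write $Df(x,y)=\tfrac{f(x)-f(y)}{x-y}$ as in the introduction. The engine is the orthogonality relation
\[
\iint\frac{\bigl(\phi_n(x)-\phi_n(y)\bigr)\bigl(\phi_m(x)-\phi_m(y)\bigr)}{(x-y)^2}\,\omega(dx\,dy)=\frac n4\,\delta_{nm},\qquad n,m\ge1,
\]
together with $D\phi_0\equiv0$. To prove it, I would first note $\omega(dx\,dy)=(1-\tfrac14 xy)\,\beta(dx)\beta(dy)=\bigl(1-\phi_1(x)\phi_1(y)\bigr)\,\beta(dx)\beta(dy)$ and, from the generating function of the $\phi_n$, that $\sum_n r^n D\phi_n(x,y)=\tfrac12 r(1-r^2)\bigl[(1-rx+r^2)(1-ry+r^2)\bigr]^{-1}$. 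The double generating series $\sum_{n,m}r^ns^m\iint D\phi_n\,D\phi_m\,\omega$ then factors into a product of one-variable integrals which, by standard Poisson-kernel computations, equal $\int\frac{\beta(dx)}{(1-rx+r^2)(1-sx+s^2)}=\frac{1+rs}{(1-r^2)(1-s^2)(1-rs)}$ and $\int\frac{\phi_1(x)\,\beta(dx)}{(1-rx+r^2)(1-sx+s^2)}=\frac{r+s}{(1-r^2)(1-s^2)(1-rs)}$; the elementary identity $(1+rs)^2-(r+s)^2=(1-r^2)(1-s^2)$ collapses the whole series to $\frac{rs}{4(1-rs)^2}=\tfrac14\sum_{n\ge1}n(rs)^n$, which is the relation. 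Granting it, for $C^1$ functions $V$ (here $V\in C^3$) and $\phi=p_0+\sum_{m\ge1}p_m\phi_m$ (with $p_0=\int\phi\,d\beta$), the expansions $DV=\sum_{n\ge1}v_nD\phi_n$ and $D\phi=\sum_{m\ge1}p_mD\phi_m$ converge in $L^2(\omega)$, so $\iint\frac{(V(x)-V(y))(\phi(x)-\phi(y))}{(x-y)^2}\,\omega(dx\,dy)=\tfrac14\sum_{n\ge1}n v_n p_n$; on the other hand, by the expansion of $u$ from the previous step, $\int\phi\,d\mu=\langle\phi,u\rangle_\beta=Ap_0-\tfrac14\sum_{n\ge1}n v_n p_n$. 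Combining the two identities gives \eqref{e:39}.

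I expect the second step to be the real work: extracting the Chebyshev coefficients of \eqref{e:sol} requires the parity case-split in the telescoping identity and a careful check that the endpoint terms of the telescopings, together with the constants $\int V'\,d\beta$ and $\int yV'\,d\beta$, cancel to leave precisely $u=A-\sum_{n\ge1}\tfrac n2 v_n\phi_n$. The orthogonality computation in the last step is also somewhat lengthy, but the generating-function/Poisson-kernel route makes it essentially mechanical.
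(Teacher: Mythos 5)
Your proposal is correct and follows the same overall architecture as the paper: expand $V$ in the Chebyshev basis, use Haagerup's lemma to match coefficients (forcing $C=-\int V\,d\beta$ and $\hat\mu_n=-\tfrac n2\int V\phi_n\,d\beta$), and verify the closed form \eqref{e:sol} by computing $\mathcal{U}$ on the generating functions $g_r,h_r$ — this is exactly the paper's Lemma~\ref{l:2} and identity \eqref{ep:20}, including the even/odd case split. The one place where you genuinely diverge is the proof of \eqref{e:39}: the paper computes the kernel $\sum_n nr^{n-1}\phi_n(x)\phi_n(y)$ by the substitution $x=2\cos u$, $y=2\cos v$ and identifies its Abel limit as $r\uparrow1$ with $-\tfrac{4-xy}{2(x-y)^2}$ via dominated convergence, whereas you establish the orthogonality $\iint D\phi_n\,D\phi_m\,\omega=\tfrac n4\,\delta_{nm}$ directly from $\iint Dg_r\,Dg_s\,\omega=\tfrac{rs}{4(1-rs)^2}$ (your $I_0,I_1$ computations check out) and then apply a Parseval argument. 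Your route stays inside the disk $|r|,|s|<1$ and so avoids the boundary-limit justification, at the cost of having to argue that $DV=\sum_n v_nD\phi_n$ and $D\phi=\sum_m p_mD\phi_m$ converge in $L^2(\omega)$; that does hold (for $\phi\in C^1$ one has $\sum m^2p_m^2<\infty$, hence $\sum mp_m^2<\infty$, and the $L^2$ limit is identified with $D\phi$ by pointwise convergence off the diagonal), but you should record it, since it is the only nontrivial analytic point your version adds.
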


\begin{proof}
In the first place, the uniqueness is clear.  
To prove the rest we first write the function $V$ in terms of Chebysev polynomials of the first kind
\[
V(x)=\int V(y)\beta(dy)+2\sum_{n=1}^{\infty}\left(\int V(y)\phi_{n}(y)\beta(dy)\right)\phi_{n}(x).
\]
Assuming $(\mu, V)$ solve \eqref {e:0} and
 invoking Haagerup's representation, results now with
\[
-2\sum_{n =1}^\infty
\frac{2}{n}\left(\int \phi_{n}(y)\mu(dy)\right)\phi_{n}(x)=C+\int V(y)\beta(dy)+2\sum_{n=1}^{\infty}\left(\int V(y)\phi_{n}(y)\beta(dy)\right)\phi_{n}(x).
\]
Thus, equating the coefficients, we must have $C=-\int V(y)\beta(dy)$ and 
\[
\int \phi_{n}(x)\mu(dx) = -\frac{n}{2}\int V(x)\phi_{n}(x)\beta(dx) \quad n\ge1,
\]
which means that $\mu(dx)=u(x)\beta(dx)$ with
\[
u(x)=A-\sum_{n=1}^{\infty}n\left( \int V(y)\phi_{n}(y)\beta(dy) \right) \phi_{n}(x).
\]

To prove equality \eqref{e:sol}, our task is therefore to show that 
\begin{equation}\label{ep:tmp1}
\begin{split}
-\sum_{n=1}^{\infty}n\left( \int V(y)\phi_{n}(y)\beta(dy) \right)\phi_{n}(x)
&=-\frac{1}{2}\int_{-2}^{2}yV'(y)\beta(dy) 
-\frac{x}{2}\int_{-2}^{2}V'(y)\beta(dy)
 \\ &\quad+ \frac{4-x^{2}}{2}\int_{-2}^{2} \frac{V'(x)-V'(y)}{x-y} \, 
\beta(dy).
\end{split} 
\end{equation}
Notice that both sides of this equation are linear functions of $V$ and 
thus by a simple approximation argument it suffices to check it for the 
case of $V(x)=\phi_{n}(x)$ for some $n\ge1$, which boils down to
\begin{equation}\label{ep:20}
n\phi_{n}(x)
=\int_{-2}^{2}y\phi_{n}'(y)\,\beta(dy) 
+x\int_{-2}^{2}\phi_{n}'(y)\,\beta(dy)
-(4-x^{2})\int_{-2}^{2} \frac{\phi_{n}'(x)-\phi_{n}'(y)}{x-y}
\,\beta(dy).
\end{equation}
There are several ways of doing this.  The straightforward way is to look at the generating functions of both sides and use \eqref{ep:gfT}.  We pause now and give a more general statement which will be used later on.  

\begin{lemma}\label{l:2}  Define the operator $\mathcal{U}_{b,c}$ which for a $C^{1}$ function $f$ on $[-2c+b,2c+b]$ outputs the function $\mathcal{U}_{b,c}f$,
\[
(\mathcal{U}_{b,c}f)(x)=\int \frac{f(x)-f(y)}{x-y}  \, \beta_{b,c}(dy). 
\]
As usual, for simplicity we denote $\mathcal{U}=\mathcal{U}_{0,1}$. 
Then 
\begin{equation}\label{ep:opU}
\mathcal{U}\phi_{n}=\frac{1}{2}\psi_{n-1},\quad n\ge 1,\quad \text{ and}\quad (\mathcal{U}\psi_{n})(x)=\frac{1}{4-x^{2}}\begin{cases}  
2-2\phi_{n+1}(x), & n\text{ odd }\\
x-2\phi_{n+1}(x), & n\text{ even }
\end{cases},\quad n\ge0. 
\end{equation}
\end{lemma}

\begin{proof}

The idea is to use the generating functions \eqref{ep:gfT} and \eqref{ep:gfU} and compute the operator $\mathcal{U}$ of these generating functions.  To carry this out, let  
\begin{equation}\label{eq:GH}
g_{r}(x):=\frac{1-rx/2}{1-rx+r^{2}}\quad \text{ and } \quad  h_{r}(x):=\frac{1}{1-rx+r^{2}}
\end{equation}
which are the generating functions of $\phi_{n}$, respectively $\psi_{n}$. 
Then it is easy to compute 
\begin{equation}\label{ep:303}
(\mathcal{U}g_{r})(x)=\frac{r}{2(1-rx+r^{2})} =\frac{rh_{r}(x)}{2} \, , 
\end{equation}
which immediately implies the first half of \eqref{ep:opU}.  On the other hand 
\begin{equation}\label{ep:30}
\begin{split}
(\mathcal{U}h_{r})(x)&=\frac{r}{(1-r^{2})(1-rx+r^{2})} 
  =\frac{1}{4-x^{2}}\Bigg ( \frac{2r}{1-r^{2}}+\frac{x}{1-r^{2}}-\frac{\Big(\frac{2-rx}{1-rx+r^{2}}-2 \Big)}{r} \Bigg)\\ 
&=\frac{1}{4-x^{2}}\left(\frac{2r}{1-r^{2}}+\frac{x}{1-r^{2}}-\frac{2(g_{r}(x)-1)}{r}  \right).
\end{split}
\end{equation}
which clearly resolves the other half of \eqref{ep:opU}.\qedhere
\end{proof}

Coming back to the proof of Theorem~\ref{t:1}.
Armed with \eqref{ep:opU} and \eqref{ep:TU} and the simple fact that 
\[
\int_{-2}^{2}y\phi'_{n}(y)\beta(dy)=\begin{cases} n & (n=\text{even})\\ 0 &(n=\text{odd}) \end{cases}\quad\text{and}\quad \int_{-2}^{2}\phi'_{n}(y)\beta(dy)=\begin{cases} 0 & (n=\text{even})\\ n/2 &(n=\text{odd}) \end{cases},
\] 
it is now an easy task to verify \eqref{ep:20},
and in turn \eqref{ep:tmp1}. 
To prove equality \eqref{e:39}, we need to check that 
\begin{equation}\label{e:41}
\sum_{n=1}^{\infty}n\int V(y)\phi_{n}(y)\beta(dy)\int \phi(x)\phi_{n}(x)\beta(dx)=\frac{1}{4}
\int_{-2}^{2}\int_{-2}^{2}
\frac{(V(x)-V(y))(\phi(x)-\phi(y))}{(x-y)^{2}}
\frac{(4-xy)dxdy}{\pi^{2}\sqrt{4-x^{2}}\sqrt{4-y^{2}}} \, .
\end{equation}
To this end, notice that for $-1<r<1$, 
\begin{equation}\label{e:42}
\begin{split}
\sum_{n=1}^{\infty}nr^{n-1}\int V(y) \phi_{n}(y) &\beta(dy)  \int \phi(x)\phi_{n}(x)\beta(dx)\\
     & =\int_{-2}^{2} \int_{-2}^{2}\sum_{n=1}^{\infty}nr^{n-1}\phi_{n}(x)\phi_{n}(y)
             \frac{\phi(x)V(y)dxdy}   {\pi^{2}\sqrt{4-x^{2}} \sqrt{4-y^{2}}}\\ 
             &=-\frac{1}{2}\int_{-2}^{2}\int_{-2}^{2} \sum_{n=1}^{\infty}nr^{n-1}\phi_{n}(x)\phi_{n}(y)
       \frac{(V(x)-V(y))(\phi(x)-\phi(y))dxdy}{\pi^{2}\sqrt{4-x^{2}}\sqrt{4-y^{2}}} \, . 
\end{split}
\end{equation}
Now to compute the kernel inside the integration, notice that (here we inspire from \cite{CD1}) with 
$x=2\cos u$ and $y=2\cos v$, 
\[
\begin{split}
\sum_{n=1}^{\infty}r^{n}\phi_{n}(x)\phi_{n}(y)&=\sum_{n=1}^{\infty}r^{n}\cos(nu)\cos(nv)  
=\sum_{n=1}^{\infty}\frac{r^{n}}{2}\left(\cos(n(u+v))+\cos(n(u-v)) \right) \\ 
 &= \sum_{n=1}^{\infty}\frac{r^{n}}{4}\left(e^{in(u+v)}+e^{-in(u+v)}+e^{in(u-v)}+e^{-in(u-v)} \right)  \\
 & = \frac{1}{4}\left( \frac{1}{1-re^{i(u+v)}}+\frac{1}{1-re^{-i(u+v)}}+\frac{1}{1-re^{i(u-v)}}+\frac{1}{1-re^{-i(u-v)}} \right).
\end{split}
\]
Taking the derivative with respect to $r$, gives
\[
\sum_{n=1}^{\infty}nr^{n-1}\phi_{n}(x)\phi_{n}(y)
   =\frac{1}{4}\bigg ( \frac{e^{i(u+v)}}{(1-re^{i(u+v)})^{2}}+\frac{e^{-i(u+v)}}{(1-re^{-i(u+v)})^{2}}+\frac{e^{i(u-v)}}{(1-re^{i(u-v)})^{2}}+\frac{e^{-i(u-v)}}{(1-re^{-i(u-v)})^{2}} \bigg).
\]
Using Lebesgue's dominated convergence combined with \eqref{e:42}, 
after letting $r\uparrow 1$, the rest follows from
\[
\frac{1}{4}\bigg ( \frac{e^{i(u+v)}}{(1-e^{i(u+v)})^{2}}+\frac{e^{-i(u+v)}}{(1-e^{-i(u+v)})^{2}}+\frac{e^{i(u-v)}}{(1-e^{i(u-v)})^{2}}+\frac{e^{-i(u-v)}}{(1-e^{-i(u-v)})^{2}} \bigg)
     =-\frac{1-\cos u\cos v}{2(\cos u-\cos v)^{2}}=-\frac{4-xy}{2(x-y)^{2}} \, .\qedhere
\]
\end{proof}

Theorem \ref {t:1} motivates the introduction of the following operators.  
\begin{definition}\label{d:1} For a $C^{2}$ function $\phi:[-2,2]\to\R$, set
\begin{equation}\label{e:EN}
\begin{split}
(\mathcal{E}\phi)(x)&=-\int \log|x-y|\phi(y)\beta(dy) , \\
(\mathcal{N}\phi)(x)&=\int y\phi'(y)\beta(dy) 
+x\int \phi'(y)\beta(dy)
- (4-x^{2})\int \frac{\phi'(x)-\phi'(y)}{x-y} \,  \beta(dy).
\end{split}
\end{equation}
\end{definition}

Using the above theorem it is clear that $\mathcal{N}\phi$ is the unique solution $\psi$ which satisfies 
\[
\begin{cases}
\int \log|x-y|\psi(y) \beta(dy)=-\phi(x)+\int \phi\, d\beta\quad \text{ almost everywhere for } x\in[-2,2],\\
\int \psi\, d\beta=0,
\end{cases}
\]
where almost everywhere is with respect to the Lebesgue measure on $[-2,2]$.  

We collect the main properties of the operators $\mathcal{E}$ and $\mathcal{N}$ in the following. 

\begin{proposition}\label{p:1}  The inner product $\langle \cdot,\cdot \rangle$ is the one
of $L^{2}(\beta)$,  where $\beta $ is the arcsine law on $[-2,2]$ (see \eqref{e:abo}).
\begin{enumerate}
\item For any $C^{2}$ function $\phi $ on $[-2,2]$, 
\begin{equation}\label{ep:10000}
\begin{split}
\mathcal{E}\mathcal{N}\phi &= \phi - \int \phi\, d\beta , \\
\mathcal{N}\mathcal{E}\phi &=\phi - \int \phi\, d\beta.
\end{split}
\end{equation}
\item One has $\mathcal{E}\phi_{0}=0$, while for $n\ge1$, $\mathcal{E}\phi_{n}=\frac{1}{n}\phi_{n}$ and   $\mathcal{N}\phi_{n}=n\phi_{n}$ for any $n\ge0$.  In other words, $\mathcal{N}$ is the counting number operator for the Chebyshev basis in $L^{2}(\beta)$. 
\item For any, $\phi,\psi$,  $C^{1}$ functions on $[-2,2]$,  
\begin{equation}\label{ep:9}
\langle \mathcal{N}\phi,\psi \rangle=2\iint
\frac{(\phi(x)-\phi(y))(\psi(x)-\psi(y))}{(x-y)^{2}} \, \omega(dx\,dy).
\end{equation}
In particular, $\langle \mathcal{N}\phi,\psi \rangle=\langle \phi,\mathcal{N}\psi \rangle$.
\item If we take $\mathcal{L}\phi=\mathcal{N}^{2}\phi$ for $C^{2}$ functions, then 
\begin{equation}\label{ep:8}
\langle \mathcal{L}\phi,\psi\rangle=2\int \phi' \psi'd\alpha.
\end{equation}
The operator $\mathcal{L}$ is actually the Jacobi operator
\[
(\mathcal{L}\phi)(x)=-(4-x^{2})\phi''(x)+x\phi'(x)
\]
with invariant measure the arcsine law $\beta$.  Moreover, $\mathcal{L}$ has a unique selfadjoint extension, still denoted by $\mathcal{L}$ and defined on 
\[
\mathcal{H}=\{\phi:[-2,2]\to\R,\phi\in L^{2}(\beta) \text{ and } \phi(x)=\int_{-2}^{x}\psi(y)dy,\,\text{ for }\beta - \text{ a.s. } x\in[-2,2]\text{ with }\psi\in L^{2}(\alpha) \}.
\]

\item For a $C^{3}$ potential $V$ on $[-2,2]$, the solution $\mu$ to \eqref{e:0} is  
\[
\mu_{V}(dx)=\left(A-\frac{1}{2}\mathcal{N}V(x)\right)\beta(dx).
\] 
\item If the minimizer of $E_{V}$ on $[-2,2]$ has full support, then 
\begin{equation}\label{e:rEV}
E_{V}=\int V\,d\beta- \frac{1}{4}\langle \mathcal{N}V,V \rangle=\int V\,d\beta-\frac{1}{2}\iint\left( 
\frac{V(x)-V(y)}{x-y}\right)^{2}\omega(dx\,dy).
\end{equation}
\end{enumerate}
\end{proposition}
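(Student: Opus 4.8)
The plan is to derive all six items essentially as bookkeeping consequences of Theorem~\ref{t:1}, Haagerup's Lemma~\ref{l:1}, Lemma~\ref{l:2} and Corollary~\ref{c:1}, proving them roughly in the order (2), (5), (1), (3), (4), (6). For (2), I would get the action of $\mathcal{E}$ from the logarithmic-potential expansion \eqref{ep:2} applied to the (signed) measure $\phi_{n}(y)\beta(dy)$, together with the orthogonality $\langle\phi_{m},\phi_{n}\rangle=\tfrac12\delta_{mn}$ for $m,n\ge1$ (from the normalization $\tilde{T}_{n}=\sqrt{2}\,T_{n}$) and $\int\phi_{n}\,d\beta=0$ for $n\ge1$; this gives $\mathcal{E}\phi_{n}=\tfrac1n\phi_{n}$, while $\mathcal{E}\phi_{0}=0$ is exactly Corollary~\ref{c:1}. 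The relation $\mathcal{N}\phi_{n}=n\phi_{n}$ is precisely the identity \eqref{ep:20}, already established inside the proof of Theorem~\ref{t:1} via Lemma~\ref{l:2} (and $\mathcal{N}\phi_{0}=0$ is trivial since $\phi_{0}'=0$). Item (5) then requires no real work: comparing the density $u$ in \eqref{e:sol} with the formula \eqref{e:EN} defining $\mathcal{N}$ shows $u=A-\tfrac12\mathcal{N}V$ directly.

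For (1), by (2) both $\mathcal{E}\mathcal{N}$ and $\mathcal{N}\mathcal{E}$ agree with $\phi\mapsto\phi-\int\phi\,d\beta$ on every $\phi_{n}$, hence on all polynomials; since the Chebyshev polynomials are complete in $L^{2}(\beta)$ and, for $C^{2}$ data, the Chebyshev series converges well enough that $\mathcal{E}$ (bounded on $L^{2}(\beta)$) and $\mathcal{N}$ act termwise, the identities pass to all $C^{2}$ functions. Alternatively one can read $\mathcal{E}\mathcal{N}\phi=\phi-\int\phi\,d\beta$ directly off the characterization of $\mathcal{N}\phi$ recorded after Definition~\ref{d:1}, and then check that $\psi:=\phi-\int\phi\,d\beta$ satisfies the two defining conditions of $\mathcal{N}(\mathcal{E}\phi)$, using $\int\log|x-y|\,\beta(dx)=0$ from Corollary~\ref{c:1}. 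For (3), I would specialize \eqref{e:39}: with $\mu_{V}=(A-\tfrac12\mathcal{N}V)\beta$ from (5), the left side of \eqref{e:39} equals $A\int\phi\,d\beta-\tfrac12\langle\mathcal{N}V,\phi\rangle$, so \eqref{e:39} reads exactly as \eqref{ep:9} with $V$ in the first slot; a $C^{2}$-density approximation by polynomials (using that the divided-difference term in $\mathcal{N}V$ is controlled by $\|V''\|_{\infty}$ and that the right side of \eqref{ep:9} is continuous for the $C^{1}$-norms) removes the $C^{3}$ restriction, and the symmetry $\langle\mathcal{N}\phi,\psi\rangle=\langle\phi,\mathcal{N}\psi\rangle$ is then visible from the symmetry of the right-hand side.

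For (4), item (2) gives $\mathcal{L}\phi_{n}=\mathcal{N}^{2}\phi_{n}=n^{2}\phi_{n}$; substituting $x=2\cos\theta$ into the classical Chebyshev equation $(1-t^{2})T_{n}''-tT_{n}'+n^{2}T_{n}=0$ shows the Jacobi operator $J\phi=-(4-x^{2})\phi''+x\phi'$ also satisfies $J\phi_{n}=n^{2}\phi_{n}$, so $\mathcal{L}=J$ on $C^{2}$ functions by completeness of the basis. Writing $J\phi=-\sqrt{4-x^{2}}\,\bigl(\sqrt{4-x^{2}}\,\phi'\bigr)'$ and integrating by parts against $\psi$ with respect to $\beta$ (boundary terms vanish since $\sqrt{4-x^{2}}\to0$ at $\pm2$) yields $\langle\mathcal{L}\phi,\psi\rangle=\tfrac1\pi\int_{-2}^{2}\sqrt{4-x^{2}}\,\phi'\psi'\,dx=2\int\phi'\psi'\,d\alpha$. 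The self-adjoint-extension claim is that the nonnegative quadratic form $\phi\mapsto2\int(\phi')^{2}\,d\alpha$ with domain $\mathcal{H}$ is closed and its associated self-adjoint operator is the unique self-adjoint extension of $\mathcal{L}$; uniqueness holds because $\{\phi_{n}\}$ is a complete orthogonal system of eigenvectors of $\mathcal{L}$ (so the span of polynomials is an operator core), equivalently because $x=\pm2$ are in the limit-point case of the singular Sturm--Liouville problem.

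Finally, for (6): if $\supp\mu_{V}=[-2,2]$, the variational characterization \eqref{ep:var} gives $2\int\log|x-y|\,\mu_{V}(dy)=V(x)+C$ a.e.\ on $[-2,2]$ and $\mu_{V}([-2,2])=1$, so $(\mu_{V},V)$ solves \eqref{e:0} with $A=1$, whence $C=-\int V\,d\beta$ by Theorem~\ref{t:1} and $\mu_{V}=(1-\tfrac12\mathcal{N}V)\beta$ by (5); since $\mu_{V}$ is absolutely continuous the equality holds $\mu_{V}$-almost everywhere, so $\iint\log|x-y|\,\mu_{V}(dx)\mu_{V}(dy)=\tfrac12\int V\,d\mu_{V}+\tfrac C2$, and substituting into $E_{V}=\int V\,d\mu_{V}-\iint\log|x-y|\,\mu_{V}(dx)\mu_{V}(dy)$ gives $E_{V}=\tfrac12\int V\,d\mu_{V}+\tfrac12\int V\,d\beta=\int V\,d\beta-\tfrac14\langle\mathcal{N}V,V\rangle$ (using $\int V\,d\mu_{V}=\int V\,d\beta-\tfrac12\langle\mathcal{N}V,V\rangle$), while (3) with $\phi=\psi=V$ converts $\langle\mathcal{N}V,V\rangle$ into the $\omega$-double integral. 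The only genuinely non-mechanical step is the functional-analytic part of (4) — identifying the domain $\mathcal{H}$ and the uniqueness of the self-adjoint extension through the limit-point Sturm--Liouville analysis at $x=\pm2$ (equivalently, the Chebyshev polynomials forming a core); everything else, modulo routine density and mild regularity arguments passing from polynomials or $C^{3}$ data to $C^{1}$/$C^{2}$ functions, reduces to the already-established Theorem~\ref{t:1} and Haagerup's lemma.
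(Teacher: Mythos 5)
Your overall strategy coincides with the paper's: everything is reduced to the Chebyshev basis via Theorem~\ref{t:1}, Haagerup's lemma and Lemma~\ref{l:2}, and your treatments of items (2), (3), (5) are exactly the paper's. Two of your steps are correct but genuinely different in detail: for item (6) you integrate the variational equality $2\int\log|x-y|\,\mu_V(dy)=V+C$ directly against $\mu_V$ and eliminate $C=-\int V\,d\beta$, whereas the paper expands $\langle\mathcal{E}u,u\rangle$ for $u=1-\tfrac12\mathcal{N}V$ using item (1); and for the essential selfadjointness in item (4) you invoke the limit-point Sturm--Liouville picture (equivalently, that the polynomial span is a core), where the paper conjugates $\mathcal{L}$ by the isometry $L^{2}(\beta)\to\ell^{2}(\N)$ onto the multiplication operator $(a_n)\mapsto(n^2a_n)$. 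These are interchangeable.

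There is, however, one concrete gap, and it is precisely the point the paper spends essentially all of its proof of item (1) on: for the identity $\mathcal{N}\mathcal{E}\phi=\phi-\int\phi\,d\beta$ to even make sense, $\mathcal{E}\phi$ must lie in the domain of the pointwise-defined operator $\mathcal{N}$ of \eqref{e:EN}, i.e.\ one must show that $\mathcal{E}$ maps $C^{2}([-2,2])$ into $C^{2}([-2,2])$. Neither of your two routes supplies this: the termwise argument identifies $\mathcal{N}\mathcal{E}\phi$ only as an $L^{2}(\beta)$ limit of $\sum a_n\phi_n\mapsto\sum a_n\phi_n-a_0$ and does not show that the integro-differential expression \eqref{e:EN} applied to the function $\mathcal{E}\phi$ is defined and equals that limit; and your alternative route, via the uniqueness characterization recorded after Definition~\ref{d:1}, presupposes that $\mathcal{N}(\mathcal{E}\phi)$ is already a meaningful object (indeed, invoking Theorem~\ref{t:1} with potential $2\mathcal{E}\phi$ would nominally require $\mathcal{E}\phi\in C^{3}$). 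The paper closes this by an explicit singularity-removal computation: writing
\[
\mathcal{E}\phi(x)=-\int\log|x-y|\bigl(\phi(y)-\phi(x)-\phi'(x)(y-x)\bigr)\beta(dy)+x\phi'(x),
\]
using \eqref{ep:6}, and differentiating twice under the integral to exhibit $(\mathcal{E}\phi)''$ as a convergent integral of second-order divided differences of $\phi$. Your proof needs this (or an equivalent regularity statement) inserted before item (1); with it, the rest of your argument goes through.
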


\begin{proof}
\begin{enumerate}
\item  We need to settle the fact that if $\phi$ is $C^{2}$ on $[-2,2]$, then $\mathcal{E}\phi$ is again $C^{2}$ on $[-2,2]$.  This is needed to give consistency to the second line of \eqref{ep:10000}. To this end, we try to remove the singularity in $\mathcal{E}$, by invoking \eqref{ep:6} and \eqref{ep:2} for the measure $\mu(dy)=y\beta(dy)$ to justify the following 
\[
\begin{split}
\mathcal{E}\phi(x)&=-\int \log|x-y|(\phi(y)-\phi(x)-\phi'(x)(y-x))\,\beta(dy)-\phi'(x)\int y\log|x-y|\,\beta(dy)\\
&=  -\int \log|x-y|(\phi(y)-\phi(x)-\phi'(x)(y-x))\,\beta(dy) +x\phi'(x),\quad \text{ for all }x\in[-2,2].
\end{split}
\] 
It is obvious from this writing and Lebesgue's dominated convergence that $\mathcal{E}\phi$ is actually a continuous function on $[-2,2]$.  Taking the derivative with respect to $x$ it is straightforward to deduce (again using \eqref{ep:6} and \eqref{ep:2} and dominated convergence) that 
\[
\begin{split}
\left(\frac{d}{dx}\mathcal{E}\phi\right)(x)&=\phi''(x)\int (y-x)\log|x-y|\,\beta(dy)+\int\frac{\phi(y)-\phi(x)-\phi'(x)(y-x)}{y-x}\,\beta(dy)+\phi'(x)+x\phi''(x)\\
&=\int\frac{\phi(y)-\phi(x)-\phi'(x)(y-x)}{y-x}\,\beta(dy)+\phi'(x)
\end{split}
\]
Taking again the derivative with respect to $x$ reveals that
\[
\begin{split}
\left(\frac{d^{2}}{dx^{2}}\mathcal{E}\phi\right)(x)&=\int\frac{\phi(y)-\phi(x)-\phi'(x)(y-x)}{(y-x)^{2}}\,\beta(dy)
\end{split}
\]
which shows that $\mathcal{E}\phi$ is actually $C^{2}$ if $\phi$ is $C^{2}$.  The rest now follows from Definition~\ref{d:1}
and Theorem~\ref{t:1}.  
\item It is an easy consequence of Lemma~\ref{l:1} and  \eqref{ep:20}.   
\item This is infered from \eqref{e:39}.
\item Equivalently, 
\[
\langle \mathcal{N}\phi,\mathcal{N}\psi\rangle=2\int \phi'\psi'd\alpha.
\]
In turn, it is sufficient to do this for $\phi=\phi_{n}$, $\psi=\phi_{m}$.  Thus we need only show that using \eqref {ep:TU}
\[
\int U_{n}\left( \frac{x}{2}\right)U_{m}\left( \frac{x}{2}\right)\alpha(dx)=\delta_{mn}.
\]
which is just the orthogonality of the polynomials $U_{n}\left( \frac{x}{2}\right)$ with respect to $\alpha$.  
The formula for $\mathcal{L}$ is just an integration by parts. 

The selfadjoint extension can be easily demonstrated by the fact that $\mathcal{L}$ has the eingenvalues $\{n^{2}\}_{n\ge0}$ with eigenfunctions $\phi_{n}$.    Indeed, it is easy to see that there is an isometry $\mathcal{A}:L^{2}(\beta)\to \ell^{2}(\N)=\{(a_{n})_{n\ge0}:\sum_{n\ge0}|a_{n}|^{2}<\infty\}$, which sends $\phi = \sum_{n\ge 0}a_{n}\phi_{n}$ into $(a_{n})_{n\ge0}$.  This isometry sends the operator $\mathcal{L}$ defined on the linear span of $\phi_{n}$ into the multiplication operator $\mathcal{R}(a_{n})_{n\ge0}=(n^{2} a_{n})_{n\ge0}$ on the space of sequences with finitely many nonzero entries.  Since the operator $\mathcal{R}$ has a unique selfadjoint extension, the same is true for $\mathcal{L}$.  The domain of $\mathcal{R}$ is pushed back by the inverse of $\mathcal{A}$ into $\mathcal{H}$. 

\item Just a rewriting of \eqref{e:sol}.  

\item Since  
\[
E_{V}=\int Vd\mu_{V}-\iint \log|x-y|\mu_{V}(dx)\mu_{V}(dy),
\]
where $\mu_{V}=\left(1-\frac{1}{2}\mathcal{N}V\right)d\beta$, it follows that 
\[
\begin{split}
E_{V}&=\int Vd\beta -\frac{1}{2}\langle\mathcal{N}V,V \rangle +\Big \langle \mathcal{E}\Big(1-\frac{1}{2}\mathcal{N}V\Big),\Big(1-\frac{1}{2}\mathcal{N}V\Big) \Big\rangle \\ 
&=  \int Vd\beta -\frac{1}{2}\langle\mathcal{N}V,V \rangle - \frac{1}{2} \Big\langle 
     \Big(V -\int Vd\beta \Big), \Big(1-\frac{1}{2}\mathcal{N}V \Big) \Big \rangle \\
&= \int V\,d\beta -\frac{1}{4} \, \langle \mathcal{N}V,V\rangle 
\end{split}
\]
which combined with \eqref{ep:9} gives \eqref{e:rEV}.\qedhere
\end{enumerate}
\end{proof}

We collect here some integration by parts properties of the operator $\mathcal{N}$ which will be used later on.  

\begin{theorem}\label{tp:3}
If $\mathcal{N}$ is the operator defined in \eqref{e:EN}, then for any two $C^{2}$ functions $\phi,\psi:[-2,2]\to\R$ 
\begin{equation}\label{ep:300}
\begin{split}
\langle \mathcal{N}\phi,\psi' \rangle+\langle \mathcal{N}\psi,\phi'\rangle &= \Pi(\phi')\Pi(x\psi')+\Pi(x\phi')\Pi(\psi') \\
\langle \mathcal{N}\phi,x\psi'\rangle+\langle \mathcal{N}\psi,x\phi'\rangle &= \Pi(x\phi')\Pi(x\psi')+4\Pi(\phi')\Pi(\psi'). 
\end{split}
\end{equation}
Here we use the notation $\Pi(\phi)=\int \phi\, d\beta$ and the convention that $x^{k}\phi$ is a shortcut for the function $f(x)=x^{k}\phi(x)$.
 In addition, 
\begin{equation}\label{ep:301}
\begin{split}
2\langle \mathcal{N}(\phi'),\psi' \rangle+\langle \mathcal{N}\psi,\phi''\rangle+\langle \mathcal{N}\phi,\psi''\rangle &= \Pi(\phi'')\Pi(x\psi')+\Pi(x\phi'')\Pi(\psi')  \\ 
&\quad+ \Pi(\phi')\Pi(x\psi'')+\Pi(x\phi')\Pi(\psi'') , \\
2\langle \mathcal{N}(x\phi'),x\psi' \rangle+\langle \mathcal{N}\psi,x^{2}\phi''\rangle+\langle \mathcal{N}\phi,x^{2}\psi''\rangle &= \Pi(x\phi')\Pi(x\psi')+\Pi(\phi')\Pi(\psi')  \\ 
& \quad + \Pi(x^{2}\phi'')\Pi(x\psi')+4\Pi(x\phi'')\Pi(\psi')  \\ 
& \quad + \Pi(x \phi')\Pi(x^{2}\psi'')+4\Pi(\phi')\Pi(x\psi'') , \\ 
\langle \mathcal{N}\phi,x\psi'' \rangle+\langle \mathcal{N}\psi,x\phi''\rangle+\langle \mathcal{N}(\phi'),x\psi'\rangle+\langle \mathcal{N}(\psi'),x\phi'\rangle &= \Pi(x\phi'')\Pi(x\psi')+4\Pi(\phi'')\Pi(\psi') \\ 
& \quad + \Pi(x \phi')\Pi(x\psi'')+4\Pi(\phi')\Pi(\psi'')  . 
\end{split}
\end{equation} 
The relation between $\mathcal{N}$ and $\mathcal{U}$ is that, for any $C^{2}$ function $f$ on $[-2,2]$,
\begin{equation}\label{ep:302}
(\mathcal{N}f)(x)=-\sqrt{4-x^{2}}\frac{d}{dx} \, \left[ \sqrt{4-x^{2}}(\mathcal{U}f)(x) \right].
\end{equation} 
\end{theorem}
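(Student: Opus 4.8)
The plan is to prove \eqref{ep:302} first, then the two identities \eqref{ep:300} (which carry the real content), and finally to deduce \eqref{ep:301} from \eqref{ep:300} by purely algebraic substitutions together with the self-adjointness $\langle\mathcal{N}\phi,\psi\rangle=\langle\phi,\mathcal{N}\psi\rangle$ of Proposition~\ref{p:1}. For \eqref{ep:302} itself, observe that both $\mathcal{N}$ and the operator $f\mapsto-\sqrt{4-x^{2}}\,\frac{d}{dx}\big[\sqrt{4-x^{2}}\,(\mathcal{U}f)\big]$ are linear and bounded from $C^{2}([-2,2])$ into $C([-2,2])$: for the second one writes $-\sqrt{4-x^{2}}\,\frac{d}{dx}\big[\sqrt{4-x^{2}}\,\mathcal{U}f\big]=x\,(\mathcal{U}f)(x)-(4-x^{2})(\mathcal{U}f)'(x)$ and notes that $(\mathcal{U}f)'(x)=\int\frac{f'(x)(x-y)-(f(x)-f(y))}{(x-y)^{2}}\,\beta(dy)$ has an integrand bounded by $\tfrac12\|f''\|_{\infty}$. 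Since polynomials are $C^{2}$-dense in $C^{2}([-2,2])$ it suffices to check \eqref{ep:302} on $\phi=\phi_{n}$, where $\mathcal{N}\phi_{n}=n\phi_{n}$ (Proposition~\ref{p:1}) and $\mathcal{U}\phi_{n}=\tfrac12\psi_{n-1}$ (Lemma~\ref{l:2}); writing $x=2\cos\theta$ one gets $\sqrt{4-x^{2}}\,\mathcal{U}\phi_{n}=\sin(n\theta)$, hence $-\sqrt{4-x^{2}}\,\frac{d}{dx}\sin(n\theta)=n\cos(n\theta)=n\phi_{n}(x)$.

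For \eqref{ep:300} I would start from the rewriting $(\mathcal{N}\phi)(x)=\Pi(x\phi')+x\,\Pi(\phi')-(4-x^{2})(\mathcal{U}\phi')(x)$ of Definition~\ref{d:1}. Since $(4-x^{2})\beta(dx)=2\alpha(dx)$, for any $C^{1}$ function $g$,
\[
\langle\mathcal{N}\phi,g\rangle=\Pi(x\phi')\,\Pi(g)+\Pi(\phi')\,\Pi(xg)-2\int(\mathcal{U}\phi')\,g\,d\alpha .
\]
Taking $g=\psi'$ for the first line and $g=x\psi'$ for the second, and adding the same relation with $\phi$ and $\psi$ exchanged, the task reduces to evaluating $\int p(x)\big[(\mathcal{U}\phi')(x)\psi'(x)+(\mathcal{U}\psi')(x)\phi'(x)\big]\,\alpha(dx)$ for $p\equiv1$ and $p(x)=x$. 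Here one uses the elementary identity
\[
(\mathcal{U}\phi')(x)\psi'(x)+(\mathcal{U}\psi')(x)\phi'(x)=\int\frac{\big[(\phi'\psi')(x)-(\phi'\psi')(y)\big]+\big(\phi'(x)-\phi'(y)\big)\big(\psi'(x)-\psi'(y)\big)}{x-y}\,\beta(dy).
\]
Integrating against $p(x)\,\alpha(dx)$ the first bracket contributes $\int p(x)\,\mathcal{U}(\phi'\psi')\,d\alpha$, which by Lemma~\ref{l:2} together with $\int\psi_{k}\,d\alpha=\delta_{k0}$ and $x\psi_{k}=\psi_{k+1}+\psi_{k-1}$ equals $\tfrac12\Pi(x\phi'\psi')$ for $p\equiv1$ and $\tfrac12\Pi(x^{2}\phi'\psi')-\Pi(\phi'\psi')$ for $p(x)=x$; the second bracket, once the $x-y$ has been cancelled, is a combination of a symmetric and an antisymmetric function of $(x,y)$, which one integrates by symmetrizing the measure $\alpha(dx)\beta(dy)$ via
\[
\alpha(dx)\beta(dy)-\alpha(dy)\beta(dx)=-\tfrac12(x-y)(x+y)\,\beta(dx)\beta(dy),\qquad\alpha(dx)\beta(dy)+\alpha(dy)\beta(dx)=\tfrac12(8-x^{2}-y^{2})\,\beta(dx)\beta(dy).
\]
The factor $x-y$ then disappears and only polynomial integrals against $\beta(dx)\beta(dy)$ remain, evaluated from $\Pi(1)=1$, $\Pi(x)=0$, $\Pi(x^{2})=2$; collecting terms (the $\Pi(x^{k}\phi'\psi')$ pieces cancel) produces exactly the right-hand sides of \eqref{ep:300}.

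Finally, \eqref{ep:301} follows formally from \eqref{ep:300}. For the first line, apply the first identity of \eqref{ep:300} with $(\phi,\psi)$ replaced by $(\phi,\psi')$ and by $(\psi,\phi')$ and add; the cross terms $\langle\mathcal{N}\phi',\psi'\rangle+\langle\mathcal{N}\psi',\phi'\rangle$ collapse to $2\langle\mathcal{N}(\phi'),\psi'\rangle$ by self-adjointness, and the right-hand sides combine to the asserted one. The third line comes out the same way from the second identity of \eqref{ep:300} (there the two cross terms already occur symmetrically, so self-adjointness is not needed). For the second line, apply the second identity of \eqref{ep:300} with $(\phi,\psi)$ replaced by $(\phi,\,x\psi'-\psi)$ and by $(\psi,\,x\phi'-\phi)$ — using $(x\psi'-\psi)'=x\psi''$ — add, collapse $\langle\mathcal{N}(x\psi'),x\phi'\rangle+\langle\mathcal{N}(x\phi'),x\psi'\rangle$ to $2\langle\mathcal{N}(x\phi'),x\psi'\rangle$, and then eliminate $\langle\mathcal{N}\phi,x\psi'\rangle+\langle\mathcal{N}\psi,x\phi'\rangle$ using \eqref{ep:300} once more; collecting terms gives the right-hand side. (The occurrence of $\mathcal{N}(\phi')$ and $\mathcal{N}(x\phi')$ in \eqref{ep:301} tacitly presupposes $\phi,\psi\in C^{3}$.) The only genuinely laborious point is the middle step: keeping track of the polynomial prefactors produced by the mismatch between the densities of $\alpha$ and $\beta$, and checking that the spurious $\Pi(x^{k}\phi'\psi')$ terms indeed cancel; the first and third steps are then routine.
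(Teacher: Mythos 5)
Your proof is correct but follows a genuinely different route from the paper's at each stage. For \eqref{ep:300} the paper reduces to $\phi=\phi_n$, $\psi=\phi_m$ and computes the inner products $\langle\phi_n,\psi_m\rangle$ by a generating-function/partial-fraction calculation; you instead expand $\langle\mathcal{N}\phi,g\rangle$ directly from the analytic formula for $\mathcal{N}$ and handle the remaining double integral by the symmetrization identities for $\alpha(dx)\beta(dy)\pm\alpha(dy)\beta(dx)$ — I checked the $p\equiv1$ case in full and the $\Pi(x\phi'\psi')$ terms do cancel as you claim; the $p=x$ case is more laborious but structurally identical. For \eqref{ep:301} the paper differentiates the scaled bilinear form $J(u,v)=\langle\mathcal{N}\phi(u\cdot+v),\psi(u\cdot+v)\rangle$ twice at $(1,0)$; your substitutions $\psi\mapsto\psi'$ and $\psi\mapsto x\psi'-\psi$ are exactly the infinitesimal generators of that scaling family, so your argument is the discrete counterpart of the paper's and is, if anything, cleaner since it avoids justifying differentiation under the integral. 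For \eqref{ep:302} the paper checks the generating functions while you check $\phi_n$ via $x=2\cos\theta$; both are one-line verifications. Your remark that $\mathcal{N}(\phi')$ and $\mathcal{N}(x\phi')$ tacitly require $\phi,\psi\in C^3$ applies equally to the paper's proof.

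One point deserves attention: carrying out your derivation of the second line of \eqref{ep:301} to the end yields the right-hand side
\[
\Pi(x\phi')\Pi(x\psi')+4\,\Pi(\phi')\Pi(\psi')+\Pi(x^{2}\phi'')\Pi(x\psi')+4\Pi(x\phi'')\Pi(\psi')+\Pi(x\phi')\Pi(x^{2}\psi'')+4\Pi(\phi')\Pi(x\psi''),
\]
i.e.\ with coefficient $4$ on $\Pi(\phi')\Pi(\psi')$, whereas the printed statement has coefficient $1$. The printed version is in fact erroneous: taking $\phi=\psi=\phi_1=x/2$ gives left-hand side $2\langle\mathcal{N}\phi_1,\phi_1\rangle=1$, while the printed right-hand side is $\Pi(1/2)^2=1/4$ and the corrected one is $4\Pi(1/2)^2=1$; the paper's own scaling argument also produces the coefficient $4$. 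So your method silently corrects a typo in the statement (which is harmless for the paper's later use, since there $\Pi(V')=0$), but you should make explicit that your collected terms give the corrected coefficient rather than asserting agreement with the formula as printed.
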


\begin{proof}
It is clear that it is enough to check \eqref{ep:300} for $\phi=\phi_{n}$, $\psi=\phi_{m}$, in which case the first part becomes 
\[
nm\langle \psi_{n-1},\phi_{m} \rangle+mn\langle \phi_{n},\psi_{m-1}\rangle 
     =\frac{mn}{2} \, \Pi(\psi_{n-1})\Pi(x\psi_{m-1})+\frac{mn}{2} \, \Pi(x\psi_{n-1})\Pi(\psi_{m-1}).  
\]
This easily follows from
\begin{equation}\label{ep:100}
\langle \phi_{n},\psi_{m} \rangle= \begin{cases}
0&\text{ if } n>m  \, \, \text{ or } \, \, n-m=1  \, \, \, (\text{mod }2 ) \\
1&\text{ if } n\le m  \, \, \text{ and  } \, \, n-m=0 \,\, \, (\text{mod } 2).
\end{cases}
\end{equation}

To quickly see this, take the generating functions (here $0<r,w<1$)
$g_{r}$ and  $h_{w}$ already introduced in the proof of Lemma \ref {l:2} in \eqref{eq:GH} and observe that 
\[
g_{r}(x)h_{w}(x)=\frac{(1-rx/2)}{rw(a-x)(b-x)}=\frac{A}{a-x}+\frac{B}{b-x}
\]
with
\[
a=\frac{1+r^{2}}{r},\quad b=\frac{1+w^{2}}{w},\quad A=-\frac{1-r^2}{2 \left(w+r^2 w-r \left(1+w^2\right)\right)},\quad B=\frac{r-2 w+r w^2}{2 w \left(r-w-r^2 w+r w^2\right)}
\]
combined with the derivative of \eqref{ep:6} and a little algebra gives
\[
\begin{split}
\sum_{n, m  =0}^\infty r^{n}w^{m}\langle \phi_{n},\psi_{m} \rangle
 &=\int g_{r}h_{w}d\beta=\frac{A}{\sqrt{a^{2}-4}}+\frac{B}{\sqrt{b^{2}-4}} 
 =\frac{rA}{1-r^{2}}+\frac{wB}{1-w^{2}} 
\\ &=\frac{1}{(1-rw)(1-w^{2})}=\sum_{n ,k = 0} ^\infty  r^{n}w^{n+2k}
\end{split}
\]
which yields \eqref{ep:100}.  

For the second line of \eqref{ep:300},  it is again sufficient to look at $\phi=\phi_{n}$ and $\psi=\phi_{m}$, in which case we need to check that 
\[
nm\langle \psi_{n-1},x\phi_{m} \rangle+mn\langle x\phi_{n},\psi_{m-1}\rangle 
   =\frac{mn}{2} \, \Pi(x\psi_{n-1})\Pi(x\psi_{m-1})+ 2mn\, \Pi(\psi_{n-1})\Pi(\psi_{m-1}).  
\]
This follows also from \eqref{ep:100}, by observing that $\phi_{1}(x)=x/2$ and $x\phi_{n}=\phi_{n+1}+\phi_{n-1}$. 

To get the rest of the proof, notice that if we set, 
\[
J(u,v):= 2\iint\frac{[\phi(ux+v)-\phi(uy+v)] [ \psi(ux+v)-\psi(uy+v)] }{(x-y)^{2}} \,  \omega(dx\,dy),
\] 
then a simple scaling argument together with \eqref{ep:9} and \eqref{ep:300} imply
\begin{equation}\label{e:10}
\begin{split}
\frac{\partial J}{\partial v}=& u\left[\left(\int \phi'(ux+v)\beta(dx) \right)\left(\int x\psi'(ux+v)\beta(dx)  \right)+\left( \int x\phi'(ux+v)\beta(dx)  \right)\left(\int \psi'(ux+v)\beta(dx) \right) \right] , \\
\frac{\partial J}{\partial u}=& u\left[\left(\int x\phi'(ux+v)\beta(dx) \right)\left(\int x\psi'(ux+v)\beta(dx)  \right)+4\left( \int \phi'(ux+v)\beta(dx)  \right)\left(\int \psi'(ux+v)\beta(dx) \right) \right].
\end{split}
\end{equation}
Now differentiation with respect to $v$ at $(u,v)=(1,0)$ of the first equation gives the first line of \eqref{ep:301}, while the other two lines follow by differentiating with respect to $u$ and $v$ of the second equation above and setting $u=1,v=0$.  

To deal with \eqref{ep:302}, it  suffices to do this for $f=\phi_{n}$ and in fact in order to check the identity for each $n$, we take the generating functions instead of the left and right hand side, thus we need only check the following
\[
(\mathcal{N}g_{r})(x)=-\sqrt{4-x^{2}} \, \frac{d}{dx}\left[\sqrt{4-x^{2}} \, (\mathcal{U}g_{r})(x)\right].
\]
Now, since $\mathcal{N}$ is the counting number generator for $\phi_{n}$, the left hand side is actually equal to $\partial_{r}g_{r}(x)$, while the right hand side, from \eqref{ep:303}, gives $\mathcal{U}g_{r}=rh_{r}(x)/2$, in which case both sides give the same answer, 
namely $-\frac{4r^{2}-rx(1+r^{2})}{2(1+rx+r^{2})^{2}}$.
This completes the proof of Theorem \ref {tp:3}. \qedhere

\end{proof}

\section{Poincar\'e Inequality, General Properties}\label{s:3}

This section introduces the natural candidate for the free Poincar\'e inequality which is investigated
 throughout this note.

\begin{definition}
A probability
measure $\mu$ on  $[-2c+b,2c+b]$ is said to satisfy a free Poincar\'e inequality with constant $\rho>0$,
denoted $P (\rho)$, if 
\begin{equation}\label{e1:1}
 2\rho c^{2} \iint\left( \frac{f(x)-f(y)}{x-y}  \right)^{2}\omega_{b,c}(dx\,dy)\le\int (f')^{2}d\mu
\end{equation}
holds for any smooth $f$ on $[-2c+b,2c+b]$.
\end{definition}

 It should be observed that the left hand side in \eqref {e1:1} only
 depends on the measure $\mu $ through its support. Actually, the first assertion of
 Proposition \ref {p:2} below shows that $\mu $ has support $[-2c+b,2c+b]$.

The next statement collects some of the properties of this free Poincar\'e inequality.

\begin{proposition}\label{p:2}  Assume $\mu$ satisfies $P(\rho)$ on $[-2c+b,2c+b]$.   The following are true.  
\begin{enumerate}

\item $\mu$ has support $[-2c+b,2c+b]$. Moreover, if $d\mu=w\,d\alpha$, with $w\in C^{2}([-2c+b,2c+b])$, then $ w(x)>0$ for all $x\in[-2c+b,2c+b]$.  
\item The constant $\rho$ in \eqref{e1:1} satisfies 
\[ 
\rho\le \frac{1}{2c^{2}}
\] 
with equality if and only if $\mu=\alpha_{b,c}$. 

\item For  any $C^{1}$ function $f:[-2c+b,2c+b]\to\R$, 
\begin{equation}\label{e1:3}
\frac{1}{2} \, \Var_{\beta_{b,c}}(f)\le  \iint\left( \frac{f(x)-f(y)}{x-y}  \right)^{2}\omega_{b,c}(dx\,dy).
\end{equation}

In fact, this inequality is equivalent to  $P(1/2)$ for the semicircular
$\alpha_{b,c}$ with equality in \eqref{e1:3} or \eqref{e1:1} only for linear functions $f$.  

\item If $d\mu=w\,d\alpha_{b,c}$, with $w\ge \rho$ on $[-2c+b,2c+b]$, then $\mu$ satisfies $P(\frac{\rho}{2c^{2}})$.  
\end{enumerate}
\end{proposition}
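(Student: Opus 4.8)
The plan is to push everything to the interval $[-2,2]$ by the affine rescaling and then read each assertion off the spectral picture of $\mathcal{N}$ from Proposition~\ref{p:1}. For the reduction: with $\tilde f = f\circ \ell_{b,c}^{-1}$ and $\tilde\mu=(\ell_{b,c})_{\#}\mu$, the change of variables $x=cu+b$ together with $\alpha_{b,c}=(\ell^{-1}_{b,c})_{\#}\alpha$, $\beta_{b,c}=(\ell^{-1}_{b,c})_{\#}\beta$, $((\ell^{2}_{b,c})^{-1})_{\#}\omega_{b,c}=\omega$ gives $\int(f')^{2}d\mu=c^{-2}\int(\tilde f')^{2}d\tilde\mu$ and $\iint(\frac{f(x)-f(y)}{x-y})^{2}\omega_{b,c}=c^{-2}\iint(\frac{\tilde f(u)-\tilde f(v)}{u-v})^{2}\omega$, so $\mu$ satisfies $P(\rho)$ on $[-2c+b,2c+b]$ iff $\tilde\mu$ satisfies $P(\rho c^{2})$ on $[-2,2]$, where $P(\rho_{0})$ there means $2\rho_{0}\iint(\frac{f(x)-f(y)}{x-y})^{2}\omega\le\int(f')^{2}d\mu$. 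Expanding $f=\sum_{n\ge0}a_{n}\tilde\phi_{n}$ in the orthonormal Chebyshev basis of $L^{2}(\beta)$, Proposition~\ref{p:1} yields $2\iint(\frac{f(x)-f(y)}{x-y})^{2}\omega=\langle\mathcal{N}f,f\rangle=\sum_{n\ge1}n a_{n}^{2}$, $2\int(f')^{2}d\alpha=\langle\mathcal{N}^{2}f,f\rangle=\sum_{n\ge1}n^{2}a_{n}^{2}$ and $\Var_{\beta}(f)=\sum_{n\ge1}a_{n}^{2}$; these three identities carry the whole argument.

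Items (3) and (4) are then immediate. Inequality \eqref{e1:3} becomes $\sum_{n\ge1}a_{n}^{2}\le\sum_{n\ge1}n a_{n}^{2}$, true since $n\ge1$, with equality exactly when $a_{n}=0$ for $n\ge2$, i.e.\ for affine $f$; the companion inequality $\sum_{n\ge1}n a_{n}^{2}\le\sum_{n\ge1}n^{2}a_{n}^{2}$ is precisely $\iint(\frac{f(x)-f(y)}{x-y})^{2}\omega\le\int(f')^{2}d\alpha$, i.e.\ $P(1/2)$ for the semicircular law on $[-2,2]$, again with equality only for affine $f$. Both inequalities reduce to $n^{2}\ge n\ge1$ on the Chebyshev spectrum and are equivalent (each follows from the other by replacing $f$ with the function of coefficients $\sqrt{n}\,a_{n}$, resp.\ $a_{n}/\sqrt{n}$), which is the claimed equivalence; undoing the scaling puts it in terms of $\beta_{b,c}$ and $\alpha_{b,c}$. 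For (4): if $d\mu=w\,d\alpha_{b,c}$ with $w\ge\rho$, then $\int(f')^{2}d\mu=\int(f')^{2}w\,d\alpha_{b,c}\ge\rho\int(f')^{2}d\alpha_{b,c}$, and since $\alpha_{b,c}$ satisfies $P(1/2c^{2})$ by (3) this is $\ge\rho\iint(\frac{f(x)-f(y)}{x-y})^{2}\omega_{b,c}$, which is $P(\rho/2c^{2})$.

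For (2), testing $P(\rho)$ with $f(x)=x$ gives $2\rho c^{2}\le1$, and the converse (that $\alpha_{b,c}$ attains $\rho=1/2c^{2}$) is part (3). If $\rho=1/2c^{2}$ then $\tilde\mu$ satisfies $P(1/2)$ on $[-2,2]$ with equality at $f(x)=x$; substituting $f_{t}=x+tg$ I would observe that $t\mapsto\int(f_{t}')^{2}d\tilde\mu-\iint(\frac{f_{t}(x)-f_{t}(y)}{x-y})^{2}\omega$ is a nonnegative quadratic in $t$ vanishing at $t=0$, so its linear coefficient vanishes, i.e.\ $\int g'\,d\tilde\mu=\iint\frac{g(x)-g(y)}{x-y}\,\omega$ for every smooth $g$. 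By Proposition~\ref{p:1}(3) applied with the test function $x$, and since $\mathcal{N}x=x$, the right side equals $\tfrac12\langle\mathcal{N}g,x\rangle=\tfrac12\langle g,x\rangle=\tfrac12\int x g\,d\beta$. Hence $\int g'\,d\tilde\mu=\tfrac12\int \frac{x g(x)}{\pi\sqrt{4-x^{2}}}\,dx$ for all smooth $g$; read as a distributional identity on $(-2,2)$ this forces $\tilde\mu=(\frac{\sqrt{4-x^{2}}}{2\pi}+C)\,dx$ there, and then testing with $g$ locally constant near $\pm2$ together with $\tilde\mu([-2,2])=1$ forces $C=0$ and rules out atoms at $\pm2$, so $\tilde\mu=\alpha$ and $\mu=\alpha_{b,c}$.

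Finally (1): if $\supp\mu$ omits some open subinterval $I$, take a smooth nonconstant $f$ supported in $I$; then $\int(f')^{2}d\mu=0$ while $\iint(\frac{f(x)-f(y)}{x-y})^{2}\omega_{b,c}>0$, since the density of $\omega_{b,c}$ is strictly positive on the open square and $f$ is nonconstant, contradicting $P(\rho)$. For the strict positivity, suppose $d\mu=w\,d\alpha$ with $w\in C^{2}$ and $w(x_{0})=0$; as $w\ge0$, $w$ vanishes to first order at $x_{0}$ (second order if $x_{0}$ is interior), and I would test $P(\rho)$ against rescaled bumps $f_{\varepsilon}(x)=g((x-x_{0})/\varepsilon)$ for a fixed nonconstant $g\in C_{c}^{\infty}$ (adapted to approach from the interior at an endpoint): a change of variables shows $\int(f_{\varepsilon}')^{2}d\mu\to0$, because the vanishing of $w$ (and of the $\alpha$-density at an endpoint) more than offsets the $\varepsilon^{-2}$ from differentiating the bump, whereas $\iint(\frac{f_{\varepsilon}(x)-f_{\varepsilon}(y)}{x-y})^{2}\omega_{b,c}$ stays bounded below (its leading part, $x,y$ both inside the support of the bump, tends to a positive multiple of an integral of $(\frac{g(u)-g(v)}{u-v})^{2}$ against a finite limiting density), which contradicts $P(\rho)$. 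The two genuinely delicate points are this last localization — matching decay rates at interior points versus at endpoints, where $w$, $\alpha$ and $\omega$ all degenerate — and the distributional step in the equality case of (2), where one must actually solve for $\tilde\mu$ and exclude possible boundary atoms.
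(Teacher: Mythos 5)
Your proposal is correct and follows essentially the same route as the paper: test functions $f(x)=x$ and $rx+\phi$ for items (2), the Chebyshev expansion reducing everything to $1\le n\le n^{2}$ for items (3)--(4), and localized bump functions exploiting the second-order (interior) versus first-order (endpoint) vanishing of $w$ for item (1). The only difference is cosmetic: for the equality case in (2) you identify $\tilde\mu$ distributionally from $\int g'\,d\tilde\mu=\tfrac12\int xg\,d\beta$, whereas the paper integrates by parts to read this as $\int g'\,d\tilde\mu=\int g'\,d\alpha$ and concludes directly; and in (1) you leave the rate bookkeeping ($O(\delta)$ interior, $O(\sqrt\delta)$ at the edge) as a sketch that the paper carries out explicitly.
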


\begin{remark} 
\eqref{e1:3} is actually a classical Poincar\'e inequality (spectral gap) for the operator $\mathcal{N}$ on $L^{2}(\beta)$ and it is equivalent (by item 3)  to the free Poincar\'e for the semicircular.  
\end{remark}

\begin{proof}
\begin{enumerate} 
\item It is pretty obvious that if $J$ is an interval with the property that $\mu(J)=0$, then choosing a function $f$ such that $f$ is constant outside the interval $J$ and is equal to $x$ on some smaller subinterval $K\subset J$ leads to a contradiction. 

One can not conclude that there is a density of $\mu$ with respect to the Lebesgue measure or for that matter with respect to the semicircular.  Indeed, for instance if we take $\mu=\frac{1}{2}\alpha_{b,c}+\frac{1}{2}\gamma$, with $\gamma$ a singular measure with respect to $\alpha_{b,c}$, then $\mu$ still satisfies a free Poincare and it is not absolutely continuous with respect to $\alpha_{b,c}$.  

Thus assume that $\mu=w\,\alpha_{b,c}$ with $w$ a continuous function.   We assume that $b=0$,  $c=1$.    In order to show that $w(a)>0$, for any $a\in (-2,2)$, we assume on the contrary that
$w(a)=0$ for some $a\in(-2,2)$.  Since $w(x)\ge0$ it follows that $a$ is a minimum point and thus, from the smoothness of $w$, $w(x)=w''(a)(x-a)^{2}+O((x-a)^{2})$.  

Now we choose an approximation of the identity constructed as follows.  First consider
\[
\phi(x)=\begin{cases}
e^{-\frac{1}{1-x^{2}}}& x\in[-1,1]\\
0 & \text{otherwise}.
\end{cases}
\]
Apply then the free Poincar\'e inequality to the function $f(x)=\phi((x-a)/\delta)$ to obtain that 
\[
\begin{split}
\rho\int_{\substack{|x-a|<\delta\\ |y-a|<\delta}} \left(\frac{\phi((x-a)/\delta)-\phi((y-a)/\delta}{x-y} \right)^{2}\omega(dx \, dy)&\le \rho\int \left(\frac{\phi((x-a)/\delta)-\phi((y-a)/\delta)}{x-y} \right)^{2}
  \omega(dx \,dy) \\ &\le \frac{1}{\delta^{2}}\int (\phi'((x-a)/\delta))^{2}w(x)\alpha(dx).
\end{split}
\]
Now, changing the variable $x=a+\delta x'$ and $y=a+\delta y'$, for small enough $\delta$, results with
\[
C_{a}\iint_{[-1,1]^{2}}  \left(\frac{\phi(x)-\phi(y)}{x-y} \right)^{2}dxdy\le\frac{1}{\delta}\int_{-1}^{1}(\phi'(x))^{2}w(a+\delta x)dx \le O(\delta)\int_{-1}^{1}(\phi'(x))^{2}dx 
\]
where $C_{a}>0$ is a constant depending on $a$ and $\rho$.  Hence we get  a contradiction as we let $\delta\to0$.  Therefore on $(-2,2)$, the density $w$ must be positive.  

Now we deal with the behavior at the edge.  Assume $w(-2)=0$.  The vanishing of $w$ near $-2$ is no longer of order $2$, but of order $1$. Thus $w(x)=(x+2)w'(-2)+o((x+2)^{2})$.   Take again $f(x)=\phi((x+2)/\delta)$ and apply the free Poincar\'e, to obtain 
\[
\begin{split}
\rho\int_{\substack{-2<x<-2+\delta \\-2<y<-2+\delta}} \left(\frac{\phi((x+2)/\delta)-\phi((y+2)/\delta}{x-y} \right)^{2}\omega(dx \, dy)&\le \rho\int \left(\frac{\phi((x+2)/\delta)-\phi((y+2)/\delta)}{x-y} \right)^{2}\omega(dx \, dy) \\ &\le \frac{1}{\delta^{2}}\int (\phi'((x+2)/\delta))^{2}w(x)\alpha(dx).
\end{split}
\]
Make the change of variables $x=-2+\delta x'$, $y=-2+\delta y'$ and deduce that for a constant $C>0$,
\[
C\iint_{[0,1]^{2}} \left( \frac{\phi(x)-\phi(y)}{x-y} \right)^{2}dxdy
   \le O(\sqrt{\delta})\int_{0}^{1} (\phi'(x))^{2}dx
\]
where we used that 
\[
\frac{4-(-2+\delta x')(-2+\delta y')}{\sqrt{(4-(-2+\delta x')^{2})(4-(-2+\delta y')^{2})}}\ge C>0
\]
uniformly for $x',y'\in[0,1]$ and small $\delta$.  Consequently, letting $\delta\to0$, we arrive to  a contradiction.   

\item Taking in \eqref{e1:1} $f(x)=x$, it is immediate that $2\rho c^{2}\le 1$.  Now, conversely, assume that $\rho=1/(2c^{2})$.   We may assume
that $b=0$, $c=1$, $\rho=1/2$ and that the measure $\mu$ is supported on $[-2,2]$.  
Take $f(x)=rx+\phi(x)$.  Then the Poincar\'e implies that for any $r\in\R$,
\[
\iint\left( \frac{\phi(x)-\phi(y)}{x-y}  \right)^{2}\omega(dx\,dy)+r\iint \frac{\phi(x)-\phi(y)}{x-y} \omega(dx\,dy)+r^{2}\le \int (\phi')^{2} d\mu+2r\int \phi'(x)\mu(dx)+r^{2}.
\]
Consequently, 
\[
\iint \frac{\phi(x)-\phi(y)}{x-y} \, \omega(dx\,dy)=2\int \phi'(x)\mu(dx).
\]
In particular we can rewrite this in terms of the operators, $\mathcal{N}$ and $\mathcal{L}$ and the notations from Proposition~\ref{p:1}
\[
\langle \mathcal{N}\phi,\phi_{1} \rangle=2\int \phi'd\mu
\]
On the other hand, since $N\phi_{1}=\phi_{1}$ combined with \eqref{ep:8} give
\[
\int \phi'd\mu=\int\phi' d\alpha.  
\]
which shows that $\mu=\alpha$.  

\item It suffices to do this for the case of $b=0$, $c=1$.   From Lemma~\ref{l:2} we know that $\mathcal{U}\phi_{n}=\frac{1}{2}\psi_{n}$ and then  writing  $f=\sum_{n =1}^\infty a_{n}\phi_{n}$ and keeping in mind \eqref{ep:9},  \eqref{e1:3} becomes equivalent to 
\[
\sum_{n =1} ^\infty a_{n}^{2}\le \sum_{n =1} ^\infty n a_{n}^{2}
\]
which is obviously true.  Written in terms of the operator $\mathcal{N}$, \eqref{e1:3} is equivalent to 
\[
\Var_{\beta}(f)\le \langle\mathcal{N}f,f \rangle
\]
for all $f\in C^{2}([-2,2])$.  Or this is just the spectral gap of $\mathcal{N}$.   Equality is attained in \eqref{e1:3} only for $f$ linear.  The free Poincar\'e's is actually equivalent to the statement $\mathcal{N}\le \mathcal{N}^{2}$.  As $\mathcal{N}$ is a non-negative operator, this is in fact equivalent to \eqref{e1:3}.   

\item Follows from  $P(1/2)$ for $\alpha$.\qedhere
\end{enumerate}
\end{proof}

\begin{remark}
Poincar\'e's inequality and the $C^{2}$ condition on the density $w$ imply
that $w$ must be positive.  Also, if $w$ is positive everywhere and continuous
then $P(\rho)$ holds for some $\rho$.  
It is interesting to see what happens if the $C^{2}$ condition on $w$ is dropped.  Is it still true that there is a Poincar\'e inequality satisfied for some $\rho>0$?   And if so, under what are the regularity conditions on $w$?   
\end{remark}

\begin{remark} 
A natural question in this context is about the extension of the Poincar\'e to the case where the measure $\mu$ has more then one interval support.  These arise naturally as equilibrium measures $\mu_{V}$ for potentials $V$ with several wells.  Indeed, it was shown in \cite{Deift2} that if $V$ is analytic near the support of $\mu_{V}$, then the support of $\mu_{V}$ must be a finite union of intervals.   
If a probability measure  $\mu$ is supported on a finite number of intervals,
say $I_{1}\cup I_{2}\dots \cup I_{k}$, and satisfies 
\[
c\int \left( \frac{f(x)-f(y)}{x-y}\right)^{2}\gamma(dx\,dy)\le \int (f')^{2}d\mu
\]
for all smooth functions on $\R$, then it can be shown, that each restriction of $\mu_{m}$ to each connected component $I_{m}$ satisfies an inequality of the form 
\[
c\int \left( \frac{f(x)-f(y)}{x-y}\right)^{2}\gamma_{m}(dx\,dy)\le \int (f')^{2}d\mu_{m}
\]
with $\gamma_{m}$ supported on $I_{m}\times I_{m}$.  

\end{remark}

\section{Equivalent Forms of Poincar\'e's Inequality}\label{s:4}

In this section we discuss the various equivalent forms of the free Poincar\'e
inequality \eqref{e1:1}.    Before we do this, let us introduce some operators.  

For a given measure $\mu=w\,d\alpha$, with $w\in C^{1}([-2,2])$  let $\mathcal{L}_{w}$ be the operator acting on $L^{2}(\beta)$ with the Dirichlet form given by $2\int (f')^{2}\mu$.  Then an integration by parts gives 
\[
\begin{split}
\langle\mathcal{L}_{w}\phi,\psi\rangle&=2\int \phi'\psi' wd\alpha = -\frac{1}{\pi}\int_{-2}^{2} \psi \frac{d}{dx} \big (\phi' w\sqrt{4-x^{2}} \big )dx 
= \int  \left(-(4-x^{2})w\phi''+(xw-(4-x^{2})w')\phi'\right)\psi \,\beta(dx),
\end{split}
\]
from which 
\[
\mathcal{L}_{w}\phi=-(4-x^{2})w\phi''+(xw-(4-x^{2})w')\phi'.
\]
Notice that for the case $w=1$, the operator $\mathcal{L}_{w}$ becomes $\mathcal{L}$ given in part (4) of Proposition~\ref{p:1}. 

Here is a statement which will be used in the sequel.

\begin{proposition}\label{p:4} If $w>0$ on $[-2,2]$ and in $C^{2}([-2,2])$, the operator $\mathcal{L}_{w}$ extends to a selfadjoint operator on $L^{2}(\beta)$ with domain $\mathcal{H}$, defined in part (4) of Proposition~\ref{p:1}.  
\end{proposition}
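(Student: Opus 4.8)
The plan is to realize $\mathcal{L}_{w}$ as the operator associated, via the representation theorem for closed nonnegative quadratic forms, to the form $\mathcal{Q}_{w}(\phi,\psi)=2\int\phi'\psi'\,d\mu=2\int\phi'\psi'\,w\,d\alpha$, and then to identify its domain with $\mathcal{H}$ by comparison with the case $w=1$ already treated in Proposition~\ref{p:1}(4). First I would record that, $w$ being continuous and positive on the compact interval, there are constants $0<m\le w\le M$, and since $w\in C^{2}$ also $|w'|\le M'$. Starting from the dense domain of Chebyshev polynomials, on which $\langle\mathcal{L}_{w}\phi,\psi\rangle=\mathcal{Q}_{w}(\phi,\psi)$ holds with no boundary term (this is exactly the integration by parts recorded before the statement, the contribution at $\pm 2$ vanishing because $\sqrt{4-x^{2}}=0$ there), the form $\mathcal{Q}_{w}$ is symmetric, nonnegative, hence closable; the two-sided bound $m\,\mathcal{Q}_{1}\le\mathcal{Q}_{w}\le M\,\mathcal{Q}_{1}$ shows that its closure $\overline{\mathcal{Q}_{w}}$ has the same form domain $\mathcal{F}$ as $\overline{\mathcal{Q}_{1}}$, namely the completion of the polynomials for the norm $\big(\|\phi\|^{2}+2\int(\phi')^{2}\,d\alpha\big)^{1/2}$; in particular every $\phi\in\mathcal{F}$ has $\phi'\in L^{2}(\alpha)$, equivalently $\sqrt{4-x^{2}}\,\phi'\in L^{2}(\beta)$. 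The representation theorem then yields a unique nonnegative selfadjoint $\widetilde{\mathcal{L}_{w}}$ with form $\overline{\mathcal{Q}_{w}}$, extending the differential operator $\mathcal{L}_{w}$.

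The core of the proof is the identification $D(\widetilde{\mathcal{L}_{w}})=\mathcal{H}=D(\mathcal{L})$, for which the right bookkeeping is the algebraic identity $\mathcal{L}_{w}\phi=w\,\mathcal{L}\phi-(4-x^{2})\,w'\,\phi'$, valid on polynomials. For $D(\mathcal{L})\subseteq D(\widetilde{\mathcal{L}_{w}})$ I would take $\phi\in\mathcal{H}=D(\mathcal{L})$ and approximate it by its Chebyshev partial sums $\phi_{k}$, which form a core for $\mathcal{L}$ by the construction in Proposition~\ref{p:1}(4) and therefore converge to $\phi$ in the graph norm of $\mathcal{L}$, a fortiori in $\mathcal{F}$; then $\mathcal{L}_{w}\phi_{k}=w\,\mathcal{L}\phi_{k}-(4-x^{2})w'\phi_{k}'$ converges in $L^{2}(\beta)$ (the first term because $w$ is bounded and $\mathcal{L}\phi_{k}\to\mathcal{L}\phi$; the second because $\sqrt{4-x^{2}}\le 2$, $|w'|\le M'$, while $\sqrt{4-x^{2}}\,\phi_{k}'\to\sqrt{4-x^{2}}\,\phi'$ in $L^{2}(\beta)$ by the $\mathcal{F}$-convergence), and passing to the limit in $\langle\mathcal{L}_{w}\phi_{k},\psi\rangle=\mathcal{Q}_{w}(\phi_{k},\psi)$ gives $\phi\in D(\widetilde{\mathcal{L}_{w}})$. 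For the reverse inclusion, given $\phi\in D(\widetilde{\mathcal{L}_{w}})$ with $\widetilde{\mathcal{L}_{w}}\phi=g$, I would test the form identity against $\psi/w$ instead of $\psi$ — legitimate since $w\in C^{2}$ is bounded below, so $\psi/w\in\mathcal{F}$ for every $\psi\in\mathcal{F}$ (use $(\psi/w)'=\psi'/w-\psi w'/w^{2}$ together with $L^{2}(\beta)\subseteq L^{2}(\alpha)$) — and, using $2\,d\alpha=(4-x^{2})\,d\beta$, obtain $\overline{\mathcal{Q}_{1}}(\phi,\psi)=\langle\tilde g,\psi\rangle$ for all $\psi\in\mathcal{F}$ with $\tilde g=w^{-1}\big(g+(4-x^{2})w'\phi'\big)\in L^{2}(\beta)$; hence $\phi\in D(\mathcal{L})=\mathcal{H}$. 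Since $\widetilde{\mathcal{L}_{w}}$ then acts on $D(\mathcal{L})$ by the displayed formula, which is just $\mathcal{L}_{w}$, this exhibits it as the selfadjoint operator with domain $\mathcal{H}$.

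The only genuinely delicate point is the behaviour at the endpoints $\pm 2$: because the diffusion coefficient $(4-x^{2})w$ degenerates there, $\pm 2$ are singular points of the underlying Sturm--Liouville operator, and one must make sure that passing between $\mathcal{L}_{w}$ and $\mathcal{L}$ neither creates nor destroys a boundary condition. What makes this work, and what the hypotheses are for, is that $w$ is regular and bounded away from $0$, so that multiplication by $w$ does not alter the nature of the singularity; this is captured concretely by the exact identity $\mathcal{L}_{w}=w\,\mathcal{L}-(4-x^{2})w'\,\frac{d}{dx}$ together with the form comparison $m\,\mathcal{Q}_{1}\le\mathcal{Q}_{w}\le M\,\mathcal{Q}_{1}$, which is precisely why no separate Weyl limit-point/limit-circle analysis at $\pm 2$ is needed. (One sees that the argument in fact only uses $w\in C^{1}$ with $w$ bounded below.)
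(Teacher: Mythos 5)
Your proof is correct, but it follows a genuinely different route from the paper's. You work at the level of quadratic forms: you close the form $\mathcal{Q}_{w}(\phi,\psi)=2\int\phi'\psi'\,w\,d\alpha$, use the two-sided comparison $m\,\mathcal{Q}_{1}\le\mathcal{Q}_{w}\le M\,\mathcal{Q}_{1}$ to pin down the form domain, invoke the representation theorem, and then identify the operator domain with $D(\mathcal{L})$ through the pointwise identity $\mathcal{L}_{w}=w\,\mathcal{L}-(4-x^{2})w'\,\frac{d}{dx}$ (one direction by graph-norm approximation with Chebyshev partial sums, the other by the clever substitution $\psi\mapsto\psi/w$ in the form identity). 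The paper instead factorizes $\mathcal{L}_{w}=\mathcal{L}\mathcal{A}_{w}$ with $(\mathcal{A}_{w}f)(x)=\int_{-2}^{x}f'w\,dy$ (centered), observes that $\mathcal{A}_{w}$ is bounded on $L^{2}_{0}(\beta)$ with bounded inverse $\mathcal{A}_{1/w}$, and deduces $D(\mathcal{L}_{w}^{*})=D(\mathcal{L}^{*})=\mathcal{H}_{0}$ directly by testing the adjoint condition against $\mathcal{A}_{1/w}\phi$; this is shorter and purely operator-theoretic, but relies on the specific one-dimensional primitive structure. Your form-theoretic argument is more standard and more robust, it makes explicit why the degenerate endpoints $\pm2$ cause no boundary-condition issue, and, as you note, it only uses $w\in C^{1}$ bounded below. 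Two small points: the bare assertion ``symmetric, nonnegative, hence closable'' is not true for arbitrary forms --- it holds here because $\mathcal{Q}_{w}$ is the form of the nonnegative symmetric operator $\mathcal{L}_{w}$ on polynomials, which you should say is the reason; and both your argument and the paper's really identify the domain with $D(\mathcal{L})$, the paper's set-theoretic description of $\mathcal{H}$ in Proposition~\ref{p:1}(4) being taken at face value.
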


\begin{proof}  It is clear that $\mathcal{L}_{w}$ sends the constant functions to $0$ and thus we restrict our attention to the restriction of $\mathcal{L}_{w}$ on the orthogonal to constants in $L^{2}(\beta)_{0}$, which the set of functions in $L^{2}(\beta)$ of mean $0$.  

There is another way of representing this operator as 
 \[
 \mathcal{L}_{w}f=\mathcal{L}\mathcal{A}_{w}f
 \] 
 with 
 \[
 (\mathcal{A}_{w}f)(x)=\int_{-2}^{x}f'(y)w(y)dy-\int_{-2}^{2} \int_{-2}^{x}f'(y)w(y)dy\,\beta(dx).
 \] 
 for any $C^{2}$ function $f$ on $[-2,2]$.   It is not hard to check that the operator $\mathcal{A}_{w}$ can be extended to a bounded operator on $L^{2}_{0}(\beta)$ due to the fact that $w$ is $C^{1}$. In addition, it maps $\mathcal{H}_{0}=\mathcal{H}\cap L^{2}_{0}(\beta)$  into itself and has the inverse on $L^{2}_{0}(\beta)$ given by $\mathcal{A}_{1/w}$.   In particular, we can use this to extend the operator $\mathcal{L}_{w}$ to $\mathcal{H}_{0}$.     
 
 The claim is now that this operator is actually selfadjoint.  Indeed, if $\psi\in L^{2}_{0}(\beta)$, which in the domain of $\mathcal{L}_{w}^{*}$, then by definition, $\phi\to\langle \mathcal{L}\mathcal{A}_{w}\phi,\psi \rangle$ extends to a  bounded functional from $L^{2}_{0}(\beta)$ into $\R$.  Thus, there is a constant $C>0$ such that $\langle \mathcal{L}\mathcal{A}_{w}\phi,\psi \rangle\le C\| \phi\|$, say for any $C^{2}$ function $\phi\in C^{2}([-2,2])\cap L^{2}_{0}(\beta)$ and then replacing $\phi$ by $\mathcal{A}_{1/w}\phi$ and the fact that this is bounded we obtain that $\langle \mathcal{L}\phi,\psi \rangle\le C\| \mathcal{A}_{1/w}\|\| \phi \|$ for any $C^{1}$ function $\phi$ on $[-2,2]$ in $L^{2}_{0}(\beta)$.  Hence $\psi$ is in the domain of $\mathcal{L}^{*}$, which is $\mathcal{H}_{0}$ by the fourth item of Proposition~\ref{p:1}.  In particular this means that the domain of $\mathcal{L}_{w}^{*}$ is $\mathcal{H}_{0}$.  
 
 On the other hand, since $\mathcal{L}_{w}$ on $\mathcal{H}_{0}$ is the closure of the same operator restricted to $C^{2}([-2,2])\cap \mathcal{H}_{0}$, it follows that $\mathcal{L}_{w}$ and $\mathcal{L}_{w}^{*}$ have the same domain of definition, namely $\mathcal{H}_{0}$ and thus $\mathcal{L}_{w}$ on $\mathcal{H}_{0}$ is selfadjoint.  
 
 \end{proof}

Recall the operator $\mathcal{U}$, which is defined in Lemma~\ref{l:1} and for which
$ \mathcal{U}\phi_{n}=\frac{1}{2}\psi_{n-1}$.
It is natural to look at this operator between $L^{2}(\beta)$ and $L^{2}(\alpha)$.  In this form,
\[
\| \mathcal{U}f \|^{2}_{\alpha}=\frac{1}{2}\Var_{\beta}(f).
\]
Now we define the inverse operator of $\mathcal{U}$ by 
\begin{equation}\label{eq:calV}
\mathcal{V}\psi_{n}=2\phi_{n+1}\quad \text{ for } \, \, n\ge0.  
\end{equation}
It is clear in this case that 
\[
\|\mathcal{V}f\|_{\beta}^{2}=2\|f \|^{2}_{\alpha}  
\]
or equivalently, 
\[ 
\langle \mathcal{V}\phi,\mathcal{V}\psi \rangle_{\beta}=2\langle\phi,\psi \rangle_{\alpha}.
\]
Also we have 
\[
\mathcal{UV}=I \quad \text{ and } \quad \mathcal{VU}=I-\Pi
\]
where $\Pi$ is as above the projection on constant functions in $L^{2}(\beta)$.  

On smooth functions $\phi$, the operator $\mathcal{V}$ has an explicit form as
\[
(\mathcal{V}\phi)(x) = \Pi(y\phi)+x\Pi(\phi)-(4-x^{2})(\mathcal{U}\phi)(x). 
\]
It is easy to see that one has to check this on the generating function of $\psi_{n}$, which is $h_{r}(x)=\frac{1}{1-rx+r^{2}}$, $0<r<1$.  For such a particular function, (cf. \eqref{ep:30})
\[
\begin{split}
x\Pi(h_{r})&=x\int h_{r}d\beta=\frac{x}{1-r^{2}} \, , \\
\Pi(yh_{r}) &=\int yh_{r}(y)\,\beta(dy)=\frac{2r}{1-r^{2}} \, ,\\ 
(4-x^{2})(\mathcal{U}h_{r})(x)&= \frac{2r}{1-r^{2}}+\frac{x}{1-r^{2}}-\frac{2(g_{r}(x)-1)}{r}
\end{split}
\]
which gives the formula.  The point of the formula is that for a $C^{2}$ function $f$ on $[-2,2]$, $\mathcal{V}f$ is at least $C^{1}$.  

Now take 
\[
\mathcal{M}=\mathcal{UNV}-I,
\]
where $I$ is the identity operator.   It is very easy to check that $\mathcal{M}$ is the counting number operator for the $\{\psi_{n}\}_{n\ge0}$ basis of $L^{2}(\alpha)$ for the semicircle law. 
Indeed, on the basis $\psi_{n}$, both sides give $n\psi_{n}$.    With this definition, it is easy to check  that
\begin{equation}\label{e1:34}
\mathcal{N}\mathcal{V}=\mathcal{V}(\mathcal{M}+I).
\end{equation}
We also have 
\begin{equation}\label{eq:mathM}
\langle \mathcal{M}g,g\rangle_{\alpha}=\iint \left( \frac{g(x)-g(y)}{x-y}\right)^{2}\alpha(dx)\alpha(dy),
\end{equation}
which stems from the fact that
\[
\frac{\psi_{n}(x)-\psi_{n}(y)}{x-y}=\sum_{k=0}^{n-1}\psi_{k}(x)\psi_{n-k-1}(y)
\]
(a consequence of the generating function for $\psi_{n}$'s) used in conjunction with the  orthogonality of $\{\psi_{n}\}_{n\ge0}$ with respect to the measure $\alpha$.

The next theorem describes equivalent description of the free Poincar\'e inequality
$P(\rho)$ which follow from the preceding operator-theoretic tools.
Recall that $ \mathcal{U}_{b,c}$ appearing below is the one defined in Lemma \ref {l:2}.  

\begin{theorem}\label{t:eq}  Assume that $\mu=w\,\alpha_{b,c}$ with $w\in C^{2}([-2c+b,2c+b])$ and $\rho>0$.  Then the following are equivalent
\begin{enumerate}
\item  $P(\rho)$ for $\mu$ (\eqref{e1:1}).
\item For any $f\in C^{2}[-2c+b,2c+b]$
\begin{equation}\label{e1:4}
2\rho \int \frac{(\mathcal{U}_{b,c} f)^{2}}{w}\,d\alpha_{b,c}\le \iint\left( \frac{f(x)-f(y)}{x-y}  \right)^{2}\omega_{b,c}(dx\,dy).
\end{equation}
We call this alternative version $P_{2}(\rho)$.
\item For any $f\in C^{2}[-2c+b,2c+b]$, $\int \frac{(\mathcal{U}_{b,c}f)^{2}}{w}\,d\alpha_{b,c}<\infty$ and
\begin{equation}\label{e1:4b}
c^{2}\iint\left( \frac{f(x)-f(y)}{x-y}  \right)^{2}\omega_{b,c}(dx\,dy)\le 2\sqrt{\int (f')^{2}\,d\mu}\sqrt{\int \frac{(\mathcal{U}_{b,c}f)^{2}}{w}\,d\alpha_{b,c}} 
    - 2\rho \int \frac{(\mathcal{U}_{b,c}f)^{2}}{w}\,d\alpha_{b,c}.
\end{equation}
 We call this inequality $P_{3}(\rho)$. 
 
\item For any $g\in C^{1}([-2c+b,2c+b])$, 
\begin{equation}\label{e1:4b2}
2\rho \int \frac{g^{2}}{w}d\alpha_{b,c} \le c^{2}\iint \left( \frac{g(x)-g(y)}{x-y}\right)^{2}\alpha_{b,c}(dx)\alpha_{b,c}(dy)+\int g^{2}d\alpha_{b,c},
\end{equation}
which is referred to as  $P_{4}(\rho)$.
\end{enumerate}

\end{theorem}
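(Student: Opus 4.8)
The plan is to first reduce to $b=0$, $c=1$ by the affine change of variable $x\mapsto cx+b$, under which $\alpha_{b,c},\beta_{b,c},\omega_{b,c}$ become $\alpha,\beta,\omega$, the operator $\mathcal{U}_{b,c}$ becomes $\mathcal{U}$, and each of (1)-(4) becomes its $b=0,c=1$ version. The whole argument then turns on the single identity
\[
\iint\left(\frac{f(x)-f(y)}{x-y}\right)^{2}\omega(dx\,dy)=\int (\mathcal{U}f)\,f'\,d\alpha ,
\]
which I would obtain by combining \eqref{ep:9} with \eqref{ep:302} and integrating by parts (the boundary terms dropping because $\sqrt{4-x^{2}}$ vanishes at $\pm 2$). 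Setting $\mathcal{A}(f)=\int(f')^{2}\,d\mu=\int(f')^{2}w\,d\alpha$, $\mathcal{B}(f)=\iint(\cdots)^{2}\omega$ and $\mathcal{C}(f)=\int\frac{(\mathcal{U}f)^{2}}{w}\,d\alpha$, this identity together with Cauchy-Schwarz gives $\mathcal{B}(f)\le\sqrt{\mathcal{A}(f)\,\mathcal{C}(f)}$ for every $f$. In this language $P(\rho)$ is $2\rho\mathcal{B}\le\mathcal{A}$, $P_{2}(\rho)$ is $2\rho\mathcal{C}\le\mathcal{B}$, and $P_{3}(\rho)$ is $\mathcal{B}\le 2\sqrt{\mathcal{A}\mathcal{C}}-2\rho\mathcal{C}$.

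The implications $P_{2}\Rightarrow P_{3}\Rightarrow P$ are then elementary: if $2\rho\mathcal{C}\le\mathcal{B}$ then $\mathcal{B}+2\rho\mathcal{C}\le 2\mathcal{B}\le 2\sqrt{\mathcal{A}\mathcal{C}}$, which is $P_{3}$; and since $2\sqrt{\mathcal{A}\mathcal{C}}=2\sqrt{(\mathcal{A}/2\rho)(2\rho\mathcal{C})}\le \mathcal{A}/2\rho+2\rho\mathcal{C}$, the inequality $P_{3}$ yields $\mathcal{B}\le\mathcal{A}/2\rho$, which is $P$. For $P_{2}\Leftrightarrow P_{4}$ I would pass to the variable $g=\mathcal{U}f$: using $f'=(\mathcal{M}+I)\mathcal{U}f$ --- immediate from $\mathcal{U}\phi_{n}=\tfrac{1}{2}\psi_{n-1}$, the rescaled \eqref{ep:TU} $\phi_{n}'=\tfrac{n}{2}\psi_{n-1}$ and $\mathcal{M}\psi_{n}=n\psi_{n}$ --- together with \eqref{eq:mathM}, the right-hand side $\mathcal{B}(f)$ of $P_{2}$ becomes $\langle(\mathcal{M}+I)g,g\rangle_{\alpha}=\iint\bigl(\tfrac{g(x)-g(y)}{x-y}\bigr)^{2}\alpha(dx)\alpha(dy)+\int g^{2}\,d\alpha$, so $P_{2}$ for $f$ is exactly $P_{4}$ for $g=\mathcal{U}f$; as $\mathcal{U}$ and $\mathcal{V}$ carry polynomials to polynomials with $\mathcal{U}\mathcal{V}=I$ and polynomials are dense in the relevant $H^{1/2}$-type norm, this gives $P_{2}\Leftrightarrow P_{4}$.

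The remaining step, $P(\rho)\Rightarrow P_{2}(\rho)$, is the one that genuinely needs operator theory rather than Cauchy-Schwarz, and I expect it to be the main obstacle. Using $f'=(\mathcal{M}+I)\mathcal{U}f$ one has $\mathcal{B}(f)=\langle(\mathcal{M}+I)^{-1}p,p\rangle_{\alpha}$ and $\mathcal{A}(f)=\int p^{2}w\,d\alpha$ with $p=f'$, and as $f$ ranges over $C^{2}$ the functions $p=f'$ range over a dense subspace of $L^{2}(\alpha)$; hence $P(\rho)$ is equivalent to the operator inequality $2\rho\,(\mathcal{M}+I)^{-1}\le M_{w}$ on $L^{2}(\alpha)$, $M_{w}$ denoting multiplication by $w$. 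Here one first checks that $w$ is bounded below by a positive constant --- for (1) this is Proposition~\ref{p:2}(1), and for (2)-(4) a localization at a hypothetical zero of $w$, in the spirit of that proof, forces the same --- so $M_{w}$ is bounded and boundedly invertible and $(\mathcal{M}+I)^{-1}$ is bounded. Inverting the inequality by the order-reversing (operator monotone) property of $S\mapsto S^{-1}$ turns it into $(\mathcal{M}+I)\ge 2\rho M_{1/w}$ in the sense of quadratic forms (clean because one side is bounded and injective, the other boundedly invertible); testing this against $g=\mathcal{U}f$ and using $\langle(\mathcal{M}+I)\mathcal{U}f,\mathcal{U}f\rangle_{\alpha}=\langle f',\mathcal{U}f\rangle_{\alpha}=\mathcal{B}(f)$ gives $\mathcal{B}(f)\ge 2\rho\int\frac{(\mathcal{U}f)^{2}}{w}\,d\alpha=2\rho\mathcal{C}(f)$, which is $P_{2}(\rho)$. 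The delicate points I anticipate are the self-adjointness and domain bookkeeping for $\mathcal{M}+I$ (controlled by the spectral picture $\mathcal{M}\psi_{n}=n\psi_{n}$ and the fourth item of Proposition~\ref{p:1}), the density of the test classes $\{f'\}$ and $\{\mathcal{U}f\}$ in the relevant spaces, and the a priori lower bound on $w$.
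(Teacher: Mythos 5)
Your proposal is correct, and for most of the implications ($P_{2}\Rightarrow P_{3}\Rightarrow P$ via Cauchy--Schwarz and the AM--GM step, and $P_{2}\Leftrightarrow P_{4}$ via the substitution $g=\mathcal{U}f$, $f=\mathcal{V}g$) it coincides with the paper's argument, which rests on the same pivotal identity $\iint\bigl(\tfrac{f(x)-f(y)}{x-y}\bigr)^{2}\omega(dx\,dy)=\int f'\,\mathcal{U}f\,d\alpha$ obtained from \eqref{ep:9} and \eqref{ep:302}. Where you genuinely diverge is the hard implication $(1)\Rightarrow(2)$. The paper works on $L^{2}_{0}(\beta)$: it writes $P(\rho)$ as $2\rho\mathcal{N}\le\mathcal{L}_{w}$, inverts to get $2\rho\,\mathcal{N}\mathcal{L}_{w}^{-1}\mathcal{N}\le\mathcal{N}$, and then must identify $\mathcal{N}\mathcal{L}_{w}^{-1}\mathcal{N}$ with $\mathcal{V}\frac{1}{w}\mathcal{U}$; this identification is the content of a separate lemma ($\mathcal{V}w\mathcal{U}=\mathcal{E}\mathcal{L}_{w}\mathcal{E}$) proved by a nontrivial computation with Chebyshev product formulas. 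You instead conjugate everything to $L^{2}(\alpha)$ via $p=f'$, where the Dirichlet form becomes the multiplication operator $M_{w}$ and the $\omega$-form becomes $\langle(\mathcal{M}+I)^{-1}p,p\rangle_{\alpha}$, so the inversion step is between a bounded positive operator and a boundedly invertible multiplication operator and the paper's lemma is bypassed entirely. The skeleton (recast as an operator inequality, invert it) is the same --- indeed the paper records exactly this inversion shortcut as a reviewer's suggestion --- but your realization of it is cleaner and trades the combinatorial lemma for routine functional analysis. Two small points: the lower bound $\inf w>0$ is only available (via Proposition~\ref{p:2}) in the direction $(1)\Rightarrow(2)$, which is the only place you use it, so your parenthetical claim that the hypotheses $(2)$--$(4)$ also force such a bound should be dropped --- the paper shows only that $(2)$ forces $w>0$ in the interior with at worst simple zeros at $\pm2$; and in $P_{2}\Rightarrow P_{3}$ you should note that $P_{2}$ itself supplies the finiteness of $\int(\mathcal{U}f)^{2}w^{-1}\,d\alpha$ asserted in item $(3)$, since the right-hand side of \eqref{e1:4} is finite for $C^{2}$ functions.
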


\begin{proof}  We prove that $(1)$ implies $(2)$ implies $(3)$ implies $(1)$  and that  $(2)$ is equivalent to $(4)$.  In addition, even though it is not needed, we will also prove that $(2)$ implies $(1)$ with the duality argument which shows $(1)$ implies $(2)$.  This last implication makes more transparent the duality behind $P(\rho)$ and $P_{2}(\rho)$.  By scaling it can be assumed that $b=0,c=1$.

$(1)\implies (2)$  From Proposition~\ref{p:2} we learn that $w>0$ on $[-2,2]$.   Write $P(\rho)$ in
the equivalent form 
\[
2\rho\mathcal{N}\le \mathcal{L}_{w}
\]
as (unbounded) selfadjoint operators on $L^{2}(\beta)$.    Since $w>0$, then we can find two positive constants $c_{1},c_{2}>0$, such that 
\[
c_{1}\mathcal{L}\le \mathcal{L}_{w}\le c_{2}\mathcal{L}.
\]
Notice that the kernel of both $\mathcal{N}$ and $\mathcal{L}_{w}$ is the space of constant functions and therefore the restrictions of $\mathcal{N,L}_{w}$ to $L^{2}_{0}(\beta)$, the orthogonal to constant functions, are invertible.  We will assume for the rest of this implication that the operators $\mathcal{N,L}_{w}$ are taken on $L^{2}_{0}(\beta)$.  As the inverse of $\mathcal{N}$ is $\mathcal{E}$ and this is bounded, it follows that $\mathcal{L}_{w}^{-1}$ is also bounded.   

After these preliminaries, we use some sort of duality.  More precisely, the main idea is that for each fixed $f\in L^{2}_{0}(\beta)\cap C^{2}([-2,2])$, 
\begin{equation}\label{eq:dual}
\begin{split}
\sup_{g\in L^{2}_{0}(\beta)\cap C^{2}([-2,2])}\left\{ \langle \mathcal{N}f,g \rangle -\rho \, \langle\mathcal{N}g,g \rangle \right\}&=\frac{1}{4\rho} \, \langle \mathcal{N}f,f \rangle\\
\sup_{g\in L^{2}_{0}(\beta)\cap C^{2}([-2,2])}\left\{ \langle \mathcal{N}f,g \rangle -\frac{1}{2} \, \langle\mathcal{L}_{w}g,g \rangle \right\}&=\frac{1}{2} \, \langle \mathcal{N}\mathcal{L}_{w}^{-1}\mathcal{N}f,f \rangle.
\end{split}
\end{equation}
Indeed, the first equality is a consequence of $\langle \mathcal{N}(f-2\rho g),f-2\rho g\rangle \ge0$ for each $f,g\in L^{2}_{0}(\beta)\cap C^{2}([-2,2])$, while the second follows from $\langle \mathcal{L}_{w}^{-1}\mathcal{N}(f-\mathcal{EL}_{w}g),\mathcal{N}(f-\mathcal{EL}_{w}g) \rangle\ge0$, with equality for $g=\mathcal{L}_{w}^{-1}\mathcal{N}f$.  This last equality may not be attained for  $g\in L^{2}_{0}(\beta)\cap C^{2}([-2,2])$, but $\mathcal{L}_{w}^{-1}\mathcal{N}f$ can be approximated by such functions. 

Poincar\'e's inequality $P(\rho)$ implies in this case that 
\[
2\rho\langle \mathcal{N}\mathcal{L}_{w}^{-1}\mathcal{N}f,f \rangle\le \langle \mathcal{N}f,f \rangle.
\]
A simpler argument of this inequality was suggested by the reviewer of this paper and is based on the fact that from $2\rho \mathcal{N}\le\mathcal{L}_{w}$ on $L^{2}_{0}(\beta)$, we get first $2\rho \mathcal{L}_{w}^{-1}\le \mathcal{N}^{-1}$ and then $2\rho\mathcal{N}\mathcal{L}_{w}^{-1}\mathcal{N}\le \mathcal{NN}^{-1}\mathcal{N}=\mathcal{N}$.  

To get to \eqref{e1:4}, it suffices  to observe that for $f\in C^{2}([-2,2])\cap L^{2}_{0}(\beta)$
\[
\int \frac{(\mathcal{U}f)^{2}}{w}d\alpha
    = \Big \langle \mathcal{U}f,\frac{1}{w} \, \mathcal{U}f\Big \rangle_{\alpha}
      = \frac{1}{2}\left\langle \mathcal{V} \, \frac{1}{w} \, \mathcal{U}f,f\right\rangle_{\beta}.
\]
It remains now to show that $\mathcal{N}\mathcal{L}_{w}^{-1}\mathcal{N}=\mathcal{V}\frac{1}{w}\mathcal{U}$ on $ C^{2}([-2,2])\cap L^{2}_{0}(\beta)$.  Passing to the inverses, this follows from the following result which is remarkable enough to be called a Lemma.   

\begin{lemma} For any $w\in C^{2}([-2,2])$, 
\begin{equation}\label{eq:equal}
\mathcal{V}w\mathcal{U}=\mathcal{E}\mathcal{L}_{w}\mathcal{E} \, \,  \text{ on } \, \, C^{2}([-2,2])\cap L^{2}_{0}(\beta).
\end{equation}
\end{lemma}

\begin{proof}
It suffices to do this for $w=\phi_{n}$.  Therefore we need to check that 
\[
\mathcal{V}\phi_{n}\mathcal{U}\phi_{m}=\mathcal{E}\mathcal{L}_{\phi_{n}}\mathcal{E}\phi_{m}
\]
for all $m\ge1$ and $n\ge0$.   It is clear that 
\[
\mathcal{L}_{\phi_{n}}\phi=\phi_{n}\mathcal{L}\phi-\frac{n(4-x^{2})}{2} \, \psi_{n-1}\phi'.
\]
Now we can continue with
\[
\mathcal{V}\phi_{n}\mathcal{U}\phi_{m}=\mathcal{E}\mathcal{L}_{\phi_{n}}\mathcal{E}\phi_{m},
\]
or
\[
\frac{1}{2} \, \mathcal{V}\phi_{n}\psi_{m-1}
    =\frac{1}{m} \, \mathcal{E}\mathcal{L}_{\phi_{n}}\phi_{m}
       =\frac{1}{m} \, \mathcal{E}\phi_{n}\mathcal{L}\phi_{m}-n\mathcal{E}(4-x^{2})\psi_{n-1}\psi_{m-1}
       =m\mathcal{E}\phi_{n}\phi_{m} - n\mathcal{E} \, \frac{(4-x^{2})}{4}\psi_{n-1}\psi_{m-1}.
\]
From \eqref{eq:phipsi}, this is equivalent to 
\[
\frac{1}{4}\mathcal{V}({\mathrm{sign}}(m-n)\psi_{|m-n|-1}+\psi_{n+m-1})
  =\frac{m}{2} \, \mathcal{E}(\phi_{|n-m|}+\phi_{n+m})-\frac{n}{2} \, \mathcal{E}(\phi_{|n-m|}-\phi_{n+m})
\]
which becomes obvious based on \eqref{eq:calV} and part 2 of Proposition \eqref{p:1}.  Just as a clarification, $\mathrm{sign}(x)$ is $-1$ for $x<0$, $0$ for $x=0$ and $1$ for $x>0$.\qedhere
\end{proof}

$(2)\implies(1)$ We present two proofs for this implication.  The first one is a duality argument like the one used in the previous implication and the second one is based on \eqref{ep:302} and integration by parts.   

Before we launch into the proofs, let us point out that if $\int \frac{(\mathcal{U}f)^{2}}{w}\,d\alpha$ is finite for any $C^{2}$ function $f$, then $w(a)>0$ for $a\in(-2,2)$ and either $w(-2)>0$ or $w(-2)=0$ and  $w'(-2)>0$.  Similarly, $w(2)>0$ or $w(2)=0$ and $w'(2)>0$.  Indeed, if $w(a)=0$ for some interior point $a\in(-2,2)$, then, since $w\ge0$ and in $C^{2}$, it means that $w(x)=O((x-a)^{2})$ near $a$.  On the other hand we can find an $n$ such that $U\phi_{n}=\psi_{n-1}/2$ is nonzero in a neighborhood of $a$.  To see this, recall that $\psi_{n}(2\cos \theta)=\sin((n+1)\theta)/\sin(\theta)$, thus for any $\theta\in(0,\pi)$, there is $n$ such that $\sin((n+1)\theta)\ne 0$.  Combining these two facts, it easily leads to a contradiction of \eqref{e1:4}.  If $w'(-2)=0$, then $w$ vanishes quadratically near $-2$ and for instance, picking $f(x)=x$, leads to a contradiction.  

The first proof is based on the following duality similar to \eqref{eq:dual}.  For any $f\in L^{2}_{0}(\beta)\cap C^{2}([-2,2])$,
\begin{equation}\label{eq:dual2}
\begin{split}
\sup_{g\in L^{2}_{0}(\beta)\cap C^{2}([-2,2])}\Big\{ \langle \mathcal{N}f,g \rangle -\frac{1}{4\rho} \, \langle\mathcal{N}g,g \rangle \Big\}&=\rho \, \langle \mathcal{N}f,f \rangle\\
\sup_{g\in L^{2}_{0}(\beta)\cap C^{2}([-2,2])}\Big\{ \langle \mathcal{N}f,g \rangle -\frac{1}{2} \, \langle\mathcal{N}\mathcal{L}_{w}^{-1}\mathcal{N}g,g \rangle \Big\}&=\frac{1}{2} \, \langle \mathcal{L}_{w}f,f \rangle \\ 
\sup_{g\in L^{2}_{0}(\beta)\cap C^{2}([-2,2])}\Big\{ \langle \mathcal{N}f,g \rangle -\frac{1}{2} \, \langle \mathcal{V}\frac{1}{w}\mathcal{U}g,g \rangle \Big\}&=\frac{1}{2} \, \langle \mathcal{L}_{w}f,f \rangle .
\end{split}
\end{equation}
The first two equalities can be justified as in the previous proof, the last line being just the consequence of the above Lemma.  As the second form in Theorem~\ref{t:eq} is written as $2\rho\langle \mathcal{V}\frac{1}{w}\mathcal{U}g,g\rangle\le \langle\mathcal{N}g,g \rangle$, $P(\rho)$ is immediate.

The second proof is based on the idea that from \eqref{ep:9} and \eqref{ep:302}, a simple integration by parts yields
\[
2 \iint\left( \frac{f(x)-f(y)}{x-y}  \right)^{2}\omega(dx\,dy)=\langle\mathcal{N}f,f \rangle=2\int f' \mathcal{U}f \,d\alpha.
\]
Therefore, \eqref{e1:4} implies $P(\rho)$ from the following sequence 
\begin{equation}\label{e4:100}
\iint\left( \frac{f(x)-f(y)}{x-y}  \right)^{2}\omega(dx\,dy) \le  2\int f' \mathcal{U}f\,d\alpha-2\rho\int \frac{(\mathcal{U}f)^{2}}{w}d\alpha\le \frac{1}{2\rho}\int (f')^{2}wd\alpha=\frac{1}{2\rho}\int (f')^{2}d\mu,
\end{equation}
where the second inequality is justified by $2ab\le a^{2}+b^{2}$ with $a=f'/\sqrt{\rho}$ and $b=2\sqrt{\rho}\,\mathcal{U}f/\sqrt{w}$.   Notice here that we need to know that $w$ does not vanish on $(-2,2)$ and we have the suitable integrability of $1/w$ at $\pm2$ to ensure the integrals are well defined. 

$(2)\implies (3)$  The first inequality of \eqref{e4:100}, gives, after an application of the integral Cauchy-Schwarz inequality, 
\[
\iint\left( \frac{f(x)-f(y)}{x-y}  \right)^{2}\omega(dx\,dy) \le  2\int f' \mathcal{U}f\,d\alpha-2\rho\int \frac{(\mathcal{U}f)^{2}}{w}d\alpha\le 2\sqrt{\int (f')^{2}d\mu}\sqrt{ \int \frac{(\mathcal{U}f)^{2}}{w}\,d\alpha}-2\rho\int \frac{(\mathcal{U}f)^{2}}{w}d\alpha. 
\]

$(3)\implies (1)$ It is just an application of the Cauchy-Schwarz inequality.  More precisely, in $2\sqrt{ab}\le a+b$, $a,b\ge0$, take $a=\frac{1}{2\rho}\int (f')^{2}d\mu$ and $b=2\rho\int \frac{(\mathcal{U}f)^{2}}{w}d\alpha$.  We need to point out here that for all $C^{2}$ functions $f$, $\int \frac{(\mathcal{U}f)^{2}}{w}d\alpha<\infty$, hence, as it was shown in the implication $(2)\implies (1)$, $w$ must be positive inside $(-2,2)$ and is such that $1/w$ is $\alpha$-integrable.

$(2)\implies (4)$ Take now $g=\mathcal{U}f$, with $f=\mathcal{V}g$.
Therefore, if we replace $f$ in \eqref{e1:4} by $\mathcal{V}g$, then 
\[
4\rho\int \frac{g^{2}}{w}d\alpha \le \langle \mathcal{NV}g, \mathcal{V}g \rangle_{\beta}=\langle \mathcal{V}(\mathcal{M}+I)g,\mathcal{V}g \rangle_{\beta} 
=  2\langle (\mathcal{M}+I)g,g\rangle_{\alpha} 
\]
which is  exactly \eqref{e1:4b2}.  

$(4)\implies (2)$  Take $g=\mathcal{U}f$ in \eqref{e1:4b2} and from the last equation and $\mathcal{VU}=I-\Pi$, 
 \[
 2\langle\mathcal{NVU}f,\mathcal{VU}f\rangle =2\langle\mathcal{N}f,f\rangle =\langle (\mathcal{M}+I)g,g\rangle_{\alpha}
 \] 
 where we used that $\Pi$ is the projection onto the constant functions which is also the kernel of $\mathcal{N}$, thus $\mathcal{N}\Pi=\Pi\mathcal{N}=0$. \qedhere
\end{proof}

\begin{remark} 
It is interesting that  the equivalence of the first and second part of Theorem~\ref{t:eq} can be seen as some sort of duality.  

As we will see in Theorem~\ref{t:5}, the second form of Poincar\'e $P_2 (\rho)$ 
is naturally derived from the transportation inequality and this is the reason why we discuss this equivalent form.  At first we arrived from the transportation inequality to
\begin{equation}\label{e4:200}
\iint\left( \frac{f(x)-f(y)}{x-y}  \right)^{2}\omega(dx\,dy) \le  2\int f' \mathcal{U}f\,d\alpha-2\rho\int \frac{(\mathcal{U}f)^{2}}{w}d\alpha,
\end{equation}
which is a rewriting  of $P_{2}(\rho)$ from which a straightforward application of the Cauchy's inequality implies $P(\rho)$.   This makes one believe that the second form is actually stronger than $P(\rho)$ but the above  theorem says that they are equivalent.  

The third form is \eqref{e4:200} plus Cauchy-Schwarz.  This actually appears naturally from the HWI inequality  discussed in Section~\ref{s:fineq}.

The fourth form is closer in spirit to the classical form of Poincar\'e as a spectral gap, though a little different.  For example in the case of the semicircular on $[-2,2]$,  $w=1$ and this inequality becomes, 
\[
\| g\|_{2}^{2}\le \langle \mathcal{M}g, g \rangle_{\alpha}+\| g\|_{2}^{2}
\]
which is nothing but non-negativity of $\mathcal{M}$ on $L^{2}(\alpha)$.  This is to be put in contrast with Biane's version \eqref{eq:pb} of Poincar\'e's which is actually a measure of the spectral gap of $\mathcal{M}$.  
\end{remark}

\begin{remark}[The optimality of the constant $\rho$ in $P(\rho)$]

$P(\rho)$ becomes $2\rho \mathcal{N}\le \mathcal{L}_{w}$.  This inequality gives in particular that if $0=\lambda_{0}<\lambda_{1}\le \lambda_{2}\dots$ are the eigenvalues of $L_{w}$ ordered non-decreasingly, then $2\rho n\le \lambda_{n}$.  The optimal $\rho$ is the infimum of $\lambda_{n}/n$ over $n\ge1$.   On the other hand, if $\inf w>0$, then $\lambda_{n}$ grow at least quadratically and as such, there is a finite $n$, for which $\lambda_{n}=2\rho n$, $\lambda_{m}>2\rho m$ for $m=1,2,\dots, n-1$ and $\lambda_{m}\ge2\rho m$ for all $m\ge n+1$.   In some sense, the optimality constant is fitting the best linear growth for the spectrum of $\mathcal{L}_{w}$.   

From the point of view of $P_{2}(\rho)$, we are looking at the best constant of something which resembles a classical Poincar\'e inequality, as the left hand side of \eqref{e1:4} is some sort of variance.   However, unless $w$ is constant, the isometric property of $\mathcal{U}$ between $L^{2}(\beta)$ and $L^{2}(\alpha)$ is disturbed.

$P_{4}(\rho)$ is comparing  $\mathcal{M}+I$ with respect to the identity on a different  $L^{2}$.  
\end{remark}

\section{Perturbation of Logarithmic Potentials}\label{s:5}

In this section we provide some results related to logarithmic potentials which are the building blocks for the connection of transportation and Poincar\'e.  The goal is to study the result of
a perturbation of $V$ on $E_{V}$.
First recall the following result from \cite{GP} which gives an expression for $E_{V}$,
rewritten here  within the notations introduced so far.

\begin{theorem}\label{t:51}
Assume $V$ is a $C^{3}$ potential.  Then the equilibrium measure on $\R$ 
associated to $V$ has support the interval $[-2c+b,2c+b]$ if and only if 
$(c,b)$ is the unique absolute maximizer of  
\begin{equation}\label{e:31}
H(c,b):=\log c-\frac{1}{2}\int V(x) \,\beta_{b,c}(dx)
\end{equation}
and
\begin{equation}\label{e:44}
\mathcal{U}_{b,c}(V')> 0 \; \; \;   \text{on a dense subset of}\; \; \; [-2c+b,2c+b].
\end{equation}
The equilibrium measure in this case is $d\mu_{V}=\mathcal{U}_{b,c}(V')d\alpha_{b,c}$.

If this is the case, $(b,c)$ is a solution of 
\begin{equation}
\label{eq:bc}
\begin{cases}
\int cxV'(cx+b)\beta(dx)=2, \\ 
\int V'(cx+b)\beta(dx)=0
\end{cases}
\end{equation}
and 
\begin{equation}\label{e:28}
\begin{split}
E_{V}=-\log c+\int V(x)\beta_{b,c}(dx)-\frac{c^{2}}{2}\iint \left(\frac{V(x)-V(y)}{x-y} \right)^{2}\omega_{b,c}(dx\,dy).
\end{split}
\end{equation}

\end{theorem}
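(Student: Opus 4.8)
Since the statement is explicitly a restatement of results of \cite{GP} recast in the notation developed above, the plan is to present it as such and to supply only the short bridging computations that connect it to the operators $\mathcal N,\mathcal U$ and the scaling relations. First I would reduce to the interval $[-2,2]$: by \eqref{ep:4}--\eqref{ep:5} and the relations $\alpha_{b,c}=(\ell_{b,c}^{-1})_\#\alpha$, $\beta_{b,c}=(\ell_{b,c}^{-1})_\#\beta$ together with the corresponding one for $\omega_{b,c}$, replacing $V$ by $W:=V\circ\ell_{b,c}^{-1}$ pushes the equilibrium measure to $(\ell_{b,c})_\#\mu_V$, turns the support $[-2c+b,2c+b]$ into $[-2,2]$, changes $H$ by an additive constant, and multiplies the double integral in \eqref{e:28} by the appropriate power of $c$. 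So it suffices to characterize when $\supp\mu_V=[-2,2]$ and to prove the formulas for $b=0$, $c=1$.

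Assume then $\supp\mu_V=[-2,2]$. By the Euler--Lagrange characterization \eqref{ep:var}, $V(x)=2\int\log|x-y|\,\mu_V(dy)+C$ quasi-everywhere, hence Lebesgue-a.e., on $[-2,2]$, so $\mu_V$ is the unique finite-total-variation solution of \eqref{e:0} with $A=1$; Theorem~\ref{t:1} and \eqref{e:sol} (see also Proposition~\ref{p:1}\,(5)) then give $\mu_V=\bigl(1-\tfrac12\mathcal N V\bigr)\beta$ and $C=-\int V\,d\beta$. Since $V$ is $C^3$ near $[-2,2]$ while the constraint set is all of $\R$, the endpoints $\pm2$ are soft edges, and the classical regularity of the equilibrium density there (as in \cite{ST}) forces the continuous function $1-\tfrac12\mathcal N V$ to vanish at both $x=2$ and $x=-2$. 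As $\mathcal N V(\pm2)=\Pi(xV')\pm2\Pi(V')$ by \eqref{e:EN}, this gives $\Pi(V')=0$ and $\Pi(xV')=2$, i.e.\ exactly system \eqref{eq:bc}. Substituting these back, $\mathcal N V=\Pi(xV')+x\,\Pi(V')-(4-x^2)\mathcal U(V')=2-(4-x^2)\mathcal U(V')$, whence $1-\tfrac12\mathcal N V=\tfrac{4-x^2}{2}\mathcal U(V')$; since $d\alpha=\tfrac{4-x^2}{2}\,d\beta$ this is precisely $d\mu_V=\mathcal U(V')\,d\alpha$. Positivity of $\mathcal U(V')$ on a dense set is then immediate: $\mathcal U(V')\ge0$ because $\mu_V\ge0$, and it cannot vanish on a subinterval since $\supp\mu_V=[-2,2]$, so \eqref{e:44} holds. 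Finally, $\partial_bH(1,0)=-\tfrac12\Pi(V')$ and $\partial_cH(1,0)=1-\tfrac12\Pi(xV')$, so \eqref{eq:bc} says exactly that $(1,0)$ is a critical point of $H$; the growth (admissibility) of $V$ makes $H(c,b)\to-\infty$ as $c\to0^+$, as $c\to\infty$, and as $|b|\to\infty$, so a maximizer exists. The converse runs the same chain backwards: \eqref{eq:bc} makes $\mathcal U_{b,c}(V')\,\alpha_{b,c}$ a probability measure (mass $\int(1-\tfrac12\mathcal N W)\,d\beta=1$ after rescaling), nonnegative by \eqref{e:44}, which satisfies the equality in \eqref{ep:var} on $[-2c+b,2c+b]$ by Theorem~\ref{t:1}; uniqueness of the equilibrium measure then identifies it with $\mu_V$, and \eqref{e:44} forces the support to be the whole interval.

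For \eqref{e:28}, once $\supp\mu_V=[-2c+b,2c+b]$ is known, rescale to $W=V\circ\ell_{b,c}^{-1}$, which puts us in the full-support case on $[-2,2]$, so Proposition~\ref{p:1}\,(6) gives $E_W=\int W\,d\beta-\tfrac12\iint\bigl(\tfrac{W(x)-W(y)}{x-y}\bigr)^2\omega(dx\,dy)$. Then $E_V=E_W-\log c$ by \eqref{ep:5}, $\int W\,d\beta=\int V\,d\beta_{b,c}$, and the change of variables $x\mapsto cx+b$ turns the double integral into $c^2\iint\bigl(\tfrac{V(x)-V(y)}{x-y}\bigr)^2\omega_{b,c}(dx\,dy)$; assembling these three facts yields \eqref{e:28}.

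The one genuinely nontrivial point---the piece I would import wholesale from \cite{GP} rather than reprove---is that a critical point of $H$ is its unique absolute maximizer, equivalently that the outer inequality in \eqref{ep:var} (for $x$ outside the candidate interval) holds for the measure $\mathcal U_{b,c}(V')\alpha_{b,c}$ precisely when $(c,b)$ maximizes $H$ globally. This equivalence, together with the uniqueness of the maximizer, is what actually pins the support down; everything else is the translation through Theorem~\ref{t:1}, Proposition~\ref{p:1} and the scaling relations, plus the standard soft-edge regularity from \cite{ST}.
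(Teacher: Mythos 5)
Your proposal is correct and matches the paper's treatment: the paper proves \eqref{e:28} exactly as you do (rescaling via \eqref{ep:4}--\eqref{ep:5} and applying \eqref{e:rEV}), and for the support characterization it simply cites \cite[Theorems 1.10 and 1.11, Chapter IV]{ST} and \cite{GP}, which is the same step you defer to the literature. The extra bridging computations you supply (deriving \eqref{eq:bc} from the soft-edge vanishing of $1-\tfrac12\mathcal{N}V$, the identity $d\mu_V=\mathcal{U}(V')\,d\alpha$, and the identification of \eqref{eq:bc} with criticality of $H$) are correct and consistent with Theorem~\ref{t:1} and Proposition~\ref{p:1}.
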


The first part of the theorem is well known and can be seen for example in \cite[Theorems 1.10 and 1.11, Chapter IV]{ST}, while \eqref{e:28} is a combination of \eqref{ep:4} and \eqref{e:rEV}.

For the rest of this paper we will use
the perturbation result for which the following assumptions on the potential $V$ suffice. 
\begin{assumption}\label{A}
\begin{enumerate}
\item $V$ is $C^{3}$.
\item There is a unique maximizer $(c,b)\in(0,\infty)\times \R$ of the function $H$ defined by \eqref{e:31}.
\item  $\mathcal{U}_{b,c}(V')>0$, on $[-2c+b,2c+b]$.  
\end{enumerate}
\end{assumption}  

\begin{remark}
The first two conditions plus \eqref{e:44} are part of the existence of a single interval for the support of the equilibrium measure $\mu_{V}$ as we presented here, while the third assumption is an improved version of \eqref{e:44}.   Moreover, in order to obtain a
Poincar\'e inequality, we must have this third condition satisfied as it was shown in Proposition~\ref{p:2}.  Thus what is written here is just the minimal conditions in order to assure the well posedness of the Poincar\'e inequality.
\end{remark}

Under the conditions of Assumption~\ref{A}, if we perturb the potential $V$ by  $V_{t}=V+tf+t^{2}g+o(t^{2})$ (uniformly on $\R$), where $f,g$  are $C^{3}$ function with all bounded derivative, then $V_{t}$ itself, for small $t$, satisfies the conditions in Assumption~\ref{A}, and thus its equilibrium measure has a one interval support $[-2c_{t}+b_{t},2c_{t}+b_{t}]$ where $c_{t}$ and $b_{t}$ are of $C^{2}$ class in $t$.  

The fact that the support of the equilibrium measure for the perturbed potential is still one interval follows roughly from the fact that the associated $H_{t}$ in Theorem~\ref{t:51} does not change much with $t$ and thus it still has a unique maximum which is close to the one at time $t=0$.  Also the positivity condition \eqref{e:44} with $V$ replaced by $V_{t}$ is satisfied for small $t$. 

The fact that the endpoints of the support of the equilibrium measure, or otherwise stated, $c_{t}$ and $b_{t}$ are $C^{2}$ follows from the implicit function theorem applied to the system \eqref{eq:bc} with $V$ replaced by $V_{t}$.  For a detailed argument on this perturbation,  the reader is referred to the perturbation section in \cite{GP}.

The main result in this section is the following description of  how $E_{V}$ behaves under perturbations.

\begin{theorem}\label{t:3}
Let $V:\R\to\R$ be a potential on $\R$ such that the equilibrium measure $\mu_{V}$ has support $[-2c+b,2c+b]$.   In addition, assume $V_{t}$, $t\in(-\epsilon,\epsilon)$ is a perturbation of $V$ such that 
\[
V_{t}=V+tf(x)+t^{2}g(x)+o(t^{2})
\]
where $f,g:\R\to\R$ are $C^{3}$ on $\R$ with bounded derivatives, and $o(t^{2})$ is uniform on $\R$.   If $E_{t}=E_{V_{t}}$, then 
\begin{equation}\label{e:11}
E_{t}=E_{0}+t\int fd\mu_{V}+t^{2}\bigg( \int gd\mu_{V}-\frac{c^{2}}{2}\iint\left( \frac{f(x)-f(y)}{x-y}  \right)^{2}\omega_{b,c}(dx\,dy)\bigg)+o(t^{2}) .
\end{equation}

\end{theorem}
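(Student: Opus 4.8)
The strategy is to use the representation of $E_V$ given in Theorem~\ref{t:51}, namely \eqref{e:28}, for each $V_t$ and then expand everything in $t$ up to second order. Write $b_t = b + t b_1 + t^2 b_2 + o(t^2)$ and $c_t = c + t c_1 + t^2 c_2 + o(t^2)$; the existence and $C^2$-smoothness of $b_t, c_t$ is guaranteed by the discussion following Assumption~\ref{A} (implicit function theorem applied to \eqref{eq:bc}). Substituting $V_t$ into \eqref{e:28}, we get
\[
E_t = -\log c_t + \int V_t(x)\,\beta_{b_t,c_t}(dx) - \frac{c_t^2}{2}\iint\left(\frac{V_t(x)-V_t(y)}{x-y}\right)^2\omega_{b_t,c_t}(dx\,dy).
\]
The first key observation is that $(b_t, c_t)$ is the \emph{maximizer} of $H_t(c,b) = \log c - \frac12\int V_t\,\beta_{b,c}$, so the first-order variation of the first two terms $-\log c_t + \int V_t\,\beta_{b_t,c_t}$ with respect to the $(b,c)$-dependence vanishes by first-order optimality. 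This is the free analogue of the classical envelope theorem and will spare us from ever computing $b_1, c_1, b_2, c_2$ explicitly.

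**Key steps.** First I would set $G(t) := -\log c_t + \int V_t\,d\beta_{b_t,c_t} = H_t(c_t,b_t) + \frac12\int V_t\,d\beta_{b_t,c_t} \cdot \text{(sign bookkeeping)}$ — more precisely, note $-\log c_t + \int V_t\,d\beta_{b_t,c_t} = -2H_t(c_t,b_t) + \text{(terms)}$; cleaner is to just differentiate $t\mapsto -\log c_t + \int V_t(x)\beta_{b_t,c_t}(dx)$ directly, splitting $\frac{d}{dt}$ into the explicit $t$-dependence through $V_t$ and the implicit dependence through $(b_t,c_t)$. The implicit part is $\nabla_{(b,c)}\big[-\log c + \int V_t\,d\beta_{b,c}\big]\cdot(\dot b_t,\dot c_t)$, which at the optimum equals $-2\nabla_{(b,c)} H_t \cdot (\dot b_t, \dot c_t) = 0$. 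So $\frac{d}{dt}G(t) = \int \dot V_t\,d\beta_{b_t,c_t}$, evaluated along the curve. At $t=0$ this gives $\int f\,d\beta_{b,c}$; differentiating once more and using optimality again at each order, the second derivative picks up $\int (2g)\,d\beta_{b,c}$ from the $t^2g$ term, a cross term $\int f\,\frac{d}{dt}\beta_{b_t,c_t}\big|_0$, and the curvature term $\frac{d^2}{dt^2}\int V\,d\beta_{b_t,c_t}$ coming from the Hessian of $H$ contracted with $(\dot b_0,\dot c_0)$. The second step is to expand the double integral term: since it already carries a factor $t^2$ once we write $V_t = V + tf + \cdots$ and recall that $\frac{V(x)-V(y)}{x-y}$ contributes the leading (order $t^0$) piece but — crucially — by \eqref{e:rEV} and Theorem~\ref{t:51} the purely-$V$ double integral combines with the first two terms to reconstitute $E_0$, only the $tf$ cross-term and the $t^2$ pieces survive to the orders we track. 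The cross-term is $-c^2 t\iint \frac{(V(x)-V(y))(f(x)-f(y))}{(x-y)^2}\omega_{b,c}$, which by Theorem~\ref{t:1} (equation \eqref{e:39}, suitably rescaled) equals $t\big(A\int f\,d\beta_{b,c} - \int f\,d\mu_V\big)/(\text{normalization})$ — this is exactly the mechanism that converts the awkward $\int f\,d\beta$ terms from step one into the desired $\int f\,d\mu_V$, with $A=1$ since $\mu_V$ is a probability measure.

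**Main obstacle.** The delicate bookkeeping is reconciling \emph{all} the second-order contributions: from $G(t)$ we get $\int f\,d\beta$-type and Hessian-of-$H$ terms; from the cross-term in the double integral we get $\iint\frac{(V-V)(f-f)}{(x-y)^2}\omega$, which via \eqref{e:39} is again a combination of $\int f\,d\beta$ and $\int f\,d\mu_V$; and separately the curvature of the map $t\mapsto(b_t,c_t)$ feeds into how $\omega_{b_t,c_t}$ and $\alpha_{b_t,c_t}$ move. The claim is that \emph{all} the $(b,c)$-variation terms cancel against each other — this is the second-order envelope phenomenon — leaving precisely $t^2\big(\int g\,d\mu_V - \frac{c^2}{2}\iint(\frac{f(x)-f(y)}{x-y})^2\omega_{b,c}\big)$. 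The cleanest way to see the cancellation, which I would adopt, is to observe that $E_t = \min_\nu E_{V_t}(\nu)$ is literally a minimum, so by the envelope theorem for the \emph{functional} minimization (not the finite-dimensional $(b,c)$ one), $\frac{d}{dt}E_t = \int \dot V_t\,d\mu_{V_t}$ exactly, with no correction terms at all. Then $\frac{d}{dt}E_t\big|_0 = \int f\,d\mu_V$, and $\frac{d^2}{dt^2}E_t\big|_0 = \int 2g\,d\mu_V + \int f\,\frac{d}{dt}\mu_{V_t}\big|_0$; the remaining task is to identify $\int f\,\frac{d}{dt}\mu_{V_t}\big|_0$ with $-c^2\iint\big(\frac{f(x)-f(y)}{x-y}\big)^2\omega_{b,c}(dx\,dy)$, which follows from the linear-response formula for the equilibrium measure — i.e., differentiating $d\mu_{V_t} = \mathcal{U}_{b_t,c_t}(V_t')\,d\alpha_{b_t,c_t}$ and pairing against $f$, where once more Theorem~\ref{t:1}/Proposition~\ref{p:1} identify $\int f\,\delta\mu$ with the $\omega_{b,c}$-bilinear form in $f$. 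That identification, and carefully handling the $(b,c)$-derivatives that appear in $\delta\mu$ but drop out after pairing with $f$ because $\int 1\,\delta\mu = 0$ (total mass is fixed), is where the real work lies.
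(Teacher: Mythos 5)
Your overall target is right and the ``functional envelope theorem'' route is a legitimate alternative to the paper's brute-force expansion, but as written the proposal has genuine gaps at exactly the points where the paper's proof does its real work. First, your finite-dimensional envelope claim is false: the quantity maximized over $(b,c)$ is $H(c,b)=\log c-\frac12\int V_t\,d\beta_{b,c}$, not the first two terms $-\log c_t+\int V_t\,d\beta_{b_t,c_t}$ of \eqref{e:28}; at the critical point of $H_t$ the $(b,c)$-gradient of those two terms equals $(0,1/c)\neq 0$ (indeed $\partial_c\big[-\log c+\int V\,d\beta_{b,c}\big]=-1/c+\int xV'(cx+b)\,\beta(dx)=1/c$ by \eqref{eq:bc}). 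The first-order terms in $c_1$ only cancel after combining with the double-integral term of \eqref{e:28}, via the identity $\Omega(V,xV')=1$, which is \emph{not} an optimality statement but the nontrivial integration-by-parts identity \eqref{ep:300} of Theorem~\ref{tp:3} evaluated on the critical point system \eqref{e:12}; the second-order terms in $c_1^2,b_1^2,c_1b_1,c_2,b_2$ likewise cancel only through \eqref{ep:301}. So the coefficients $b_1,c_1,b_2,c_2$ cannot be avoided by optimality alone.

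Second, your ``cleanest way'' asserts $\frac{d}{dt}E_t=\int\dot V_t\,d\mu_{V_t}$ exactly and then differentiates once more. To make the second differentiation rigorous you need $C^2$ dependence of $\mu_{V_t}$ (with its \emph{moving} support $[-2c_t+b_t,2c_t+b_t]$) and of its logarithmic energy on $t$ --- which is precisely the content one is trying to establish; the natural two-sided competitor bound using $\mu_V+t\nu_f$ fails because $\nu_f=-\frac12(\mathcal Nf)\,d\beta$ has an edge singularity relative to $\mu_V$, so $\mu_V+t\nu_f$ is not a positive measure unless $f$ satisfies the two constraints \eqref{e6:101} --- this is exactly the obstruction the paper isolates at the end of Section~\ref{s:fineq} and in Remark~\ref{r:100}. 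Finally, the identification $\int f\,d\dot\mu_{V_t}\big|_{0}=-c^2\iint\big(\tfrac{f(x)-f(y)}{x-y}\big)^2\omega_{b,c}(dx\,dy)$ is Theorem~\ref{t:4}, whose proof in the paper uses the identities \eqref{epe:1} derived inside the proof of Theorem~\ref{t:3}; invoking it here without an independent derivation is circular. In short: the skeleton is sound, but every shortcut you propose either fails as stated or quietly presupposes the perturbation analysis of $(b_t,c_t)$ and the integration-by-parts identities of Theorem~\ref{tp:3} that constitute the actual proof.
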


\begin{proof}
Assume for simplicity (without loss of generality) that $c=1$, $b=0$.  The critical point system \eqref{eq:bc}, reads as
\[
\begin{cases}
\int xV'(x)\,\beta(dx)=2, \\ 
\int V'(x)\,\beta(dx)=0.
\end{cases}
\]
To simplify the writing in this proof,  for any smooth functions $h,k:[-2,2]\to\R$, set 
$ \Pi(h) =\int h\,d\beta $ as in Theorem~\ref {tp:3} and
\begin{equation}\label{eq:L}
\Omega(h,k)=\frac{1}{2} \, \langle\mathcal{N}h,k \rangle
 =\iint \frac{(h(x)-h(y))(k(x)-k(y))}{(x-y)^{2}} \, \omega(dx\,dy). 
\end{equation}
Recast the critical point system in this notation as
\begin{equation}\label{e:12}
\begin{cases}
\Pi(xV')=2, \\ 
\Pi(V')=0.
\end{cases}
\end{equation}
Now, we notice that for small $t$, the equilibrium measure of $V_{t}$ has support $[-2c_{t}+b_{t},2c_{t}+b_{t}]$, where $c_{t}$ and $b_{t}$ depend $C^{2}$ on $t$.  Thus we can write 
\[
c_{t}=1+tc_{1}+t^{2}c_{2}+o(t^{2}),\quad b_{t}=tb_{1}+t^{2}b_{2}+o(t^{2}). 
\]
Continuing, from \eqref{e:28}, 
\[
E_{t}:=-\log c_{t}+\int V_{t}(c_{t}x+b_{t}) \beta(dx) -\frac{1}{2}\iint\left(\frac{V_{t}(c_{t}x+b_{t})-V_{t}(c_{t}y+b_{t})}{x-y} \right)^{2}\omega(dx\,dy).
\]
 Next, a simple Taylor expansion gives 
 \begin{align*}
V_{t}(c_{t}x+b_{t})&=V(c_{t}x+b_{t})+tf(c_{t}x+b_{t})+t^{2}g(c_{t}x+b_{t})+o(t^{2}) \\ 
&=V(x)+t \big (c_{1}x+b_{1}+t(c_{2}x+b_{2}) \big )V'(x)+t^{2} (c_{1}x+b_{1})^{2}V''(x)/2  \\ 
& \quad + tf(x)+t^{2} \big (c_{1}x+b_{1}+t(c_{2}x+b_{2}) \big )f'(x)+t^{2}g(x)+o(t^{2})\\ 
&= V(x)+t \big [(c_{1}x+b_{1})V'(x)+f(x) \big ] \\ 
&\quad+t^{2} \big [(c_{2}x+b_{2})V'(x)+(c_{1}x+b_{1})^{2}V''(x)/2+(c_{1}x+b_{1})f'(x)+g(x) \big ]
      +o(t^{2}).
\end{align*}
Expanding $E_{t}$ to second order yields
\begin{align*}
E_{t}=&E_{0}-t c_{1}-(c_{2}-c_{1}^{2}/2)t^{2}+t c_{1}\Pi(xV')+tb_{1}\Pi(V')+t\Pi(f)\\
& + t^{2} \big [c_{2}\Pi(xV')+b_{2}\Pi(V')+c_{1}^{2}\Pi(x^{2}V'')/2+b_{1}c_{1}\Pi(xV'')
    +b_{1}^{2}\Pi(V'')/2+c_{1}\Pi(xf')+b_{1}\Pi(f')+\Pi(g) \big ] \\ 
& - t \big [c_{1}\Omega(V,xV')+b_{1}\Omega(V,V')+\Omega(V,f) \big ] \\ 
&-t^{2} \big [\Omega((c_{1}x+b_{1})V',(c_{1}x+b_{1})V')/2+\Omega((c_{1}x+b_{1})V',f)+\Omega(f,f)/2 \\ 
&\qquad+\Omega(V,(c_{2}x+b_{2})V')+\Omega(V,(c_{1}x+b_{1})^{2}V'')/2+\Omega(V,(c_{1}x+b_{1})f')+\Omega(V,g) \big ]+o(t^{2}) ,
\end{align*}
and after regrouping the terms according to the power of $t$ it becomes
\begin{equation}\label{e:4}
\begin{split}
E_{t}=& E_{0} +t \big[\Pi(f)-\Omega(V,f)\big]+t^{2}\big[\Pi(g)-\Omega(V,g)-\Omega(f,f)/2\big]  \\
&+t \big [c_{1}(\Pi(xV')-1-\Omega(V,xV'))+b_{1}(\Pi(V')-\Omega(V,V')) \big ] \\ 
&+ t^{2}\big [c_{2}(\Pi(xV')-1-\Omega(V,xV'))+b_{2}(\Pi(V')-\Omega(V,V')) \big ] \\  
&+t^{2} \big [c_{1}^{2}(1+\Pi(x^{2}V'')-\Omega(xV',xV')-\Omega(V,x^{2}V''))+b_{1}^{2}(\Pi(V'')-\Omega(V',V')-\Omega(V,V'')) \\ 
&\qquad +2c_{1}b_{1}(\Pi(xV'')-\Omega(V',xV')-\Omega(V,xV'')) \big ]/2 \\
& +t^{2} \big [c_{1}(\Pi(xf')-\Omega(xV',f)-\Omega(V,xf'))+b_{1}(\Pi(f')-\Omega(V',f)
    -\Omega(V,f')) \big ] +o(t^{2}) .
\end{split}
\end{equation}

Equation \eqref{e:39} gives
\[
\Pi(f)-\Omega(V,f)=\int f d\mu_{V} \quad \text{ and } \quad \Pi(g)-\Omega(V,g)=\int gd\mu_{V}.
\]
and thus the first line of \eqref{e:4} is precisely \eqref{e:11} modulo $o(t^{2})$.   Our remaining task is to prove that the rest of \eqref{e:4} is zero (up to $o(t^{2})$).  

Taking $\phi=\psi=V$ in the second line of \eqref{ep:300} together with \eqref{e:12}, leads to
$ \Omega(V,xV')=1 $
and thus $\Pi(xV')-1-\Omega(V,xV')=0$.   Now using the first line of \eqref{ep:300}
with $\phi=\psi=V$ leads to  $ \Omega(V,V')=0$
which combined with \eqref{e:12}, leads to the conclusion that the second and the third lines of \eqref{e:4} are $0$.  

For the fourth line, take $\phi=\psi=V$ in the second equality of  \eqref{ep:301} plus \eqref{e:12} to conclude that 
\[
\Omega(xV',xV')+\Omega(V,x^{2}V'')=1+\Pi(x^{2}V''). 
\]
Similarly, using the third line of \eqref{ep:301} with $\phi=\psi=V$ in addition to \eqref{e:12}, yields, 
\[
\Omega(V',xV')+\Omega(V,xV'')=\Pi(xV'').
\]
while using the first equality in \eqref{ep:300} for $\phi=V'$ and $\psi=V$ combined with \eqref{e:12}, provides 
\[
\Omega(V',V')+\Omega(V,V'')=\Pi(V'').
\]
These show that the forth and fifth lines of \eqref{e:4} are $0$.  

Finally, using  \eqref{ep:300} for $\phi=V$ and $\psi=f$ together with \eqref{e:12}, yields that 
\begin{equation}\label{epe:1}
\Omega(V',f)+\Omega(V,f')=\Pi(f') \quad \text{ and } \quad \Omega(xV',f)+\Omega(V,xf')=\Pi(xf') 
\end{equation}
which concludes that the last line in \eqref{e:4} is $o(t^{2})$.  This completes the proof.  \qedhere
\end{proof}

\begin{remark}\label{r:100}
Notice that Theorem~\ref{t:3} has a simpler proof in the case the equilibrium measure of $V_{t}$ has a support which is independent of $t$.  Assuming $b=0$, $c=1$,  conform to \eqref{eq:bc}, this amounts to 
\begin{equation}\label{eq:10001}
\begin{cases}\int f'(x)\,\beta(dx)=0, \\ \int xf'(x)\,\beta(dx)=0\end{cases}\quad \text{ and }\quad\begin{cases}\int g'(x)\,\beta(dx)=0,\\ \int xg'(x)\,\beta(dx)=0.\end{cases}
\end{equation}
 The simpler proof alluded to in this case follows directly from the formula \eqref{e:rEV} with $V$ replaced by $V_{t}$ plus expansion  in $t$.  

The content of this theorem says that in fact the same formula holds true even without the constraints from \eqref{eq:10001} but one has to go through a careful examinations of the dependence on the coefficients $c_{1}$, $c_{2}$, $b_{1}$ and $b_{2}$ in \eqref{e:4} and notice that their contributions disappear due to  some remarkable and non-trivial cancellations.  
\end{remark}

Next we study how the equilibrium measure changes under perturbation of the potential.

\begin{theorem}\label{t:4}
Let $V$ satisfy the Assumptions~\ref{A} and let $f\in C^{3}_{b}(\R)$.  Then, in the sense of distributions, 
\[
d\mu_{V+tf}=d\mu_{V}+td\nu_{f}+O(t^{2})
\]
where $\nu_{f}$ is the (unique) signed measure on $[-2c+b,2c+b]$ which solves 
\[
\begin{cases}
2\int\log|x-y|\nu_{f}(dy)=f(x)+C \, \, \, \text{ for almost every } \, x\in[-2c+b,2c+b] , \\
\nu_{f}([-2c+b,2c+b])=0
\end{cases}
\]
where ``almost every'' is with respect to the Lebesgue measure.  
If $b=0$, $c=1$, this can be written in simpler terms as
\begin{equation}\label{e:53}
d\mu_{V+tf}=d\mu_{V}-\frac{t}{2}(\mathcal{N}f)\,d\beta+O(t^{2}).
\end{equation} 
In addition, for $x\in[-2c+b,2c+b]$, 
\begin{equation}\label{e:psi}
\Psi_{f}(x):=\int_{-\infty}^{x}\nu_{f}(dy)=\frac{\sqrt{4c^{2}-(x-b)^{2}}}{2\pi}
   \, (\mathcal{U}_{b,c}f)(x)=\frac{\sqrt{4c^{2}-(x-b)^{2}}}{2\pi}\int \frac{f(x)-f(y)}{x-y}\,d\beta_{b,c}(dy). 
\end{equation}
\end{theorem}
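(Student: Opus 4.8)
The whole statement reduces, via the scaling relations of Section~\ref{s:2}, to the case $b=0$, $c=1$: the defining problem for $\nu_f$, the operators $\mathcal{N},\mathcal{U}$, and the square‑root weight all transform consistently under $x\mapsto cx+b$, so it suffices to treat $\mu_V$ with support $[-2,2]$. As recalled just before Theorem~\ref{t:3}, for $|t|$ small the potential $V_t=V+tf$ again satisfies Assumptions~\ref{A}; hence by Theorem~\ref{t:51} its equilibrium measure has support a single interval $[-2c_t+b_t,2c_t+b_t]$, with $t\mapsto(c_t,b_t)$ of class $C^2$ and $(c_0,b_0)=(1,0)$, and $d\mu_{V_t}=\mathcal{U}_{b_t,c_t}(V_t')\,d\alpha_{b_t,c_t}$. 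For a test function $\phi$, the substitution $x=c_ts+b_t$ together with $\alpha_{b_t,c_t}=(\ell_{b_t,c_t}^{-1})_{\#}\alpha$ and $\beta_{b_t,c_t}=(\ell_{b_t,c_t}^{-1})_{\#}\beta$ gives
\begin{equation*}
\int\phi\,d\mu_{V_t}=\frac{1}{c_t}\int_{-2}^{2}\phi(c_ts+b_t)\,(\mathcal{U}Q_t)(s)\,d\alpha(s),\qquad Q_t(s):=V_t'(c_ts+b_t),
\end{equation*}
with $\mathcal{U}=\mathcal{U}_{0,1}$. Since $V,f\in C^3$ and $(c_t,b_t)\in C^2$, the map $t\mapsto Q_t$ is $C^2$ with values in $C^1([-2,2])$, hence $t\mapsto\mathcal{U}Q_t$ is $C^2$ with values in $C([-2,2])$, and the right‑hand side above is of class $C^2$ in $t$. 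By Taylor's theorem it therefore suffices to prove that $D\phi:=\frac{d}{dt}\big|_{t=0}\int\phi\,d\mu_{V_t}$ equals $\int\phi\,d\nu_f$; the error term in $d\mu_{V+tf}=d\mu_V+t\,d\nu_f+O(t^2)$ is then automatically $O_\phi(t^2)$, which is exactly the assertion in the sense of distributions.

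It remains to identify the functional $D$. Differentiating the displayed formula at $t=0$ produces three terms, and integrating by parts the one carrying $\phi'$ (the boundary contributions at $\pm2$ vanish because $\sqrt{4-x^2}$ does, while $\mathcal{U}V'$ is bounded on $[-2,2]$) shows that $D$ is an absolutely continuous signed measure of finite total variation, supported on $[-2,2]$, with $D([-2,2])=\frac{d}{dt}\big|_{t=0}\mu_{V_t}([-2,2])=0$. On the other hand, fix $x\in(-2,2)$; for $|t|$ small enough that $x$ lies in the interior of $\supp\mu_{V_t}$, the equilibrium relation \eqref{ep:var} reads $2\int\log|x-y|\,d\mu_{V_t}(y)=V_t(x)-\ell_t=V(x)+tf(x)-\ell_t$, where $\ell_t$ depends smoothly on $t$ (it is an explicit functional of $(c_t,b_t)$). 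Differentiating this identity at $t=0$ — the interchange of $\frac{d}{dt}$ with the logarithmic pairing being legitimate after splitting $\log|x-\,\cdot\,|$ into a smooth part, handled by the $C^2$‑dependence of $\mu_{V_t}$, and a part supported near $y=x$, on which $\mu_{V_t}$ has a density that is continuous, uniformly bounded and $C^1$ in $t$ — we obtain $2\int\log|x-y|\,dD(y)=f(x)-\dot\ell=:f(x)+C$ for a.e. $x\in[-2,2]$, with $C$ independent of $x$. Thus $D$ solves problem \eqref{e:0} with $V$ replaced by $f$ and $A=0$, and the uniqueness part of Theorem~\ref{t:1} forces $D=\nu_f$. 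Applying Proposition~\ref{p:1}(5) to the potential $f$ with $A=0$ then gives $d\nu_f=-\tfrac12(\mathcal{N}f)\,d\beta$, which is \eqref{e:53}; the general $(b,c)$ follows by undoing the scaling.

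For \eqref{e:psi} we again take $b=0$, $c=1$. Since $\nu_f$ is supported on $[-2,2]$ with $\nu_f(\R)=0$, the function $\Psi_f(x)=\int_{-\infty}^{x}\nu_f(dy)$ vanishes for $x\le-2$ and for $x\ge2$, and on $(-2,2)$ satisfies $\Psi_f'(x)=-\tfrac12(\mathcal{N}f)(x)\,\tfrac{1}{\pi\sqrt{4-x^2}}$. By \eqref{ep:302} one has $\frac{d}{dx}\big[\sqrt{4-x^2}\,(\mathcal{U}f)(x)\big]=-(\mathcal{N}f)(x)/\sqrt{4-x^2}$, hence
\begin{equation*}
\frac{d}{dx}\left[\frac{\sqrt{4-x^2}}{2\pi}\,(\mathcal{U}f)(x)\right]=-\frac{(\mathcal{N}f)(x)}{2\pi\sqrt{4-x^2}}=\Psi_f'(x),
\end{equation*}
and $\tfrac{\sqrt{4-x^2}}{2\pi}(\mathcal{U}f)(x)\to0$ as $x\to-2$ because $\mathcal{U}f$ is bounded; therefore $\Psi_f(x)=\tfrac{\sqrt{4-x^2}}{2\pi}(\mathcal{U}f)(x)$, which rescales to \eqref{e:psi}, the last equality there being merely the definition of $\mathcal{U}_{b,c}$.

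The main obstacle is the moving support: one has to use (from \cite{GP}, as quoted in the text) that the perturbed potential still produces a one‑interval equilibrium measure with $C^2$ endpoints, and one has to justify differentiating the logarithmic potential $x\mapsto\int\log|x-y|\,d\mu_{V_t}(y)$ in $t$ across its logarithmic singularity. Once these points are granted, the argument is a direct consequence of Theorems~\ref{t:51} and~\ref{t:1}, Proposition~\ref{p:1}, and the identity \eqref{ep:302}.
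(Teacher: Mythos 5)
Your argument is correct, but it reaches the first variation $\nu_f$ by a different road than the paper. The paper plugs the test function $\phi$ into the explicit representation \eqref{e:39} of $\int\phi\,d\mu_{V_t}$, Taylor-expands $V_t(c_tx+b_t)$, and then kills all the terms carrying $c_1,b_1$ by the integration-by-parts identities of Theorem~\ref{tp:3} (the same cancellations \eqref{e:10} and \eqref{epe:1} that drive Theorem~\ref{t:3}); the first-order coefficient is then \emph{read off} as $\int\phi\,d\nu_f$. You instead \emph{characterize} the derivative measure $D$: you show it is an absolutely continuous signed measure of total mass zero, differentiate the Frostman condition $2\int\log|x-y|\,d\mu_{V_t}(y)=V_t(x)-\ell_t$ in $t$ to see that $D$ solves problem \eqref{e:0} with data $(f,A=0)$, and conclude $D=\nu_f$ from the uniqueness in Theorem~\ref{t:1}. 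This is conceptually cleaner — it explains \emph{why} the answer is $\nu_f$ without any miraculous cancellation — but it shifts the analytic burden onto the interchange of $d/dt$ with the logarithmic pairing, across both the singularity at $y=x$ and the moving support; that is exactly the delicate step the paper's algebraic route avoids, and your sketch of it (splitting $\log|x-\cdot|$ into a smooth part and a localized part) should be expanded if this were to be written out in full, since the naive $t$-differentiation under the integral produces a non-absolutely-convergent $1/(x-y)$ kernel that must be handled in the principal-value sense. The treatment of \eqref{e:53} via Proposition~\ref{p:1}(5) and of \eqref{e:psi} via \eqref{ep:302} coincides with the paper's.

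One small over-claim: the assertion that $t\mapsto Q_t$ is $C^2$ with values in $C^1([-2,2])$ is slightly too strong for $V,f\in C^3$ (the second $t$-derivative of $V'(c_ts+b_t)$ already consumes $V'''$, so its $s$-derivative would need $V^{(4)}$). For the $O(t^2)$ remainder you only need $t\mapsto\int\phi\,d\mu_{V_t}$ to admit a second-order expansion, and for that it suffices that $\partial_t^2Q_t$ be Lipschitz in $s$ uniformly in $t$ so that $\mathcal{U}(\partial_t^2Q_t)$ is bounded; this is the level of care the paper itself exercises, so the point is cosmetic, but the statement as written should be weakened.
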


\begin{proof}
As in the proof of Theorem~\ref{t:3}, for small $t$, the equilibrium measure of $V+tf$ has a one interval support $[-2c_{t}+b_{t},2c_{t}+b_{t}]$ and $c_{t},b_{t}$ both depend $C^{3}$ on $t$.   In addition, assuming for simplicity that $c_{0}=1$ and $b_{0}=0$, then we know that for $c_{t}=1+c_{1}t+O(t^{2})$ and $b_{t}=tb_{1}+O(t^{2})$, for some $c_{1},b_{1}\in\R$. 

For a smooth function, $\phi$, using equation \eqref{e:39}, one gets 
\[
\begin{split}
\int \phi d\mu_{V+tf}=\int \phi(c_{t}x+b_{t})\,\beta(dx)&-\iint\frac{(V(c_{t}x+b_{t})-V(c_{t}y+b_{t}))(\phi(c_{t}x+b_{t})-\phi(c_{t}y+b_{t}))}{(x-y)^{2}} \, \omega(dx\,dy)\\
&-  t\iint\frac{(f(c_{t}x+b_{t})-f(c_{t}x+b_{t}))(\phi(c_{t}x+b_{t})-\phi(c_{t}y+b_{t}))}{(x-y)^{2}}
   \, \omega(dx\,dy)
\end{split}
\]
Using Taylor's expansion in $t$, after a little calculation and with the notation from \eqref{eq:L} we continue the above identity with
\[
\begin{split}
\int \phi\, d\mu_{V+tf} &=  \Pi(\phi)-\Omega(V,\phi) \\
&  \quad +t \big [ c_{1}(\Pi(x\phi') -\Omega(xV',\phi)-\Omega(V,x\phi'))
   +b_{1}(\Pi(\phi')-\Omega(V',\phi)-\Omega(V,\phi'))-\Omega(f,\phi) \big ] +O(t^{2}) .
\end{split}
\]
After using \eqref{e:39}, \eqref{e:10} and \eqref{epe:1} the latter can be simplified further into 
\[
\begin{split}
\int \phi d\mu_{V+tf}= & \int \phi\, d\mu_{V}+t\int \phi\, d\nu_{f}+O(t^{2}).
\end{split}
\]
Furthermore,  from \eqref{e:sol}, we have
\[
\nu_{f}(dx)=-\frac{1}{2} \, \mathcal{N}f(x) \,\beta(dx).
\]
Now, \eqref{e:psi} follows from \eqref{ep:302}.  The proof of the theorem is complete.  \qedhere
 
\end{proof}

\section{Poincar\'e's inequality and other functional inequalities}\label{s:fineq}

 This section is devoted to the relationship of the free Poincar\'e inequality
 with the transportation and Log-Sobolev inequalities on the basis of the
 perturbation properties developed in the preceding section. As mentioned
 in the Introduction, the implications from the transportation and Log-Sobolev inequalities
 to the Poincar\'e inequality in the classical case are standard
 (cf. \cite{Ba, OV, BL2, Vi}). Their analogues in the
 free case are surprisingly more involved.

First recall the main functional inequalities to be compared with the free Poincar\'e
inequality (see \cite{LP}).  

\begin{definition} 
\begin{enumerate}
\item The probability measure $\mu_{V}$, or more appropriately, $V$ satisfies a transportation inequality with parameter $\rho>0$, if for any other measure $\mu$, 
\begin{equation}\label{e6:13}
\rho W_{2}^{2}(\nu,\mu_{V})\le E_{V}(\nu)-E_{V}(\mu_{V}),
\end{equation}
where $W_{2}(\nu,\mu)$ is the Wasserstein distance defined as 
\[
W_{2}(\nu,\mu)^{2}=\inf\left\{ \int |x-y|^{2}\pi(dx\,dy) \right\}
\]
where the infimum is taken over all probability measures $\pi$, with marginals $\mu$ and $\nu$ (i.e. $\pi(dx,\R)=\nu(dx)$ and $\pi(\R,dy)=\mu(dy)$). 
In short we refer to the inequality \eqref{e6:13} as $T(\rho)$ which was introduced by Biane and Voiculescu \cite{BV} for the semicircular and in this form by \cite{HPU1}.  

\item Similarly we say that $\mu_{V}$ satisfies a Log-Sobolev, in short  $LSI(\rho)$, $\rho>0$, if for any other (sufficiently nice) probability measure   $\nu$,  
\begin{equation}\label{e:ls}
4\rho (E_{V}(\nu)-E_{V}(\mu_{V}))\le  I_{V}(\nu | \mu_{V})
\end{equation} 
where 
\[
I_{V}(\nu|\mu_{V})=\int (H\nu-V')^{2}d\nu
\]
with 
\begin{equation}\label{e:H}
H\nu(x)=p.v. \int \frac{2}{x-y} \, \nu(dy)
\end{equation} 
taken in the principal value sense.   This inequality was introduced in this form by Biane \cite{Biane2}.

\item At last we say that $\mu_{V}$ satisfies an $HWI(\rho)$, $\rho\in\R$, if for all sufficiently nice probability measure $\nu$,
\begin{equation}\label{e:hwi}
E(\mu)-E(\mu_{V})  \le \sqrt{I_{V}(\mu|\mu_{V})} \, W_{2}(\mu,\mu_{V})-\rho \, W_{2}^{2}(\mu,\mu_{V}).
\end{equation}
\end{enumerate}
\end{definition}

 We should  mention that Log-Sobolev implies transportation
 \cite {L} and that HWI implies Log-Sobolev for $\rho >0$. 
 In particular, although the theorem below provides independent proofs, one main implication
 is the one from the transportation inequality to the Poincar\'e inequality.

A short description of the transportation map is in place here.  For any probability measures, $\mu$, $\nu$ on the real line, with $\mu$ absolutely continuous with respect to the Lebesgue, then  $W_{2}^{2}(\mu,\nu)=\int (\theta(x)-x)^{2}\mu(dx)$ with $\theta$ being the unique non-decreasing transportation map of $\mu$ into $\nu$.  
In addition, if $\mu$ and $\nu$ have densities $g_{\mu}$ and $g_{\nu}$, then 
\begin{equation}\label{e:tran}
\theta'(x) g_{\nu}(\theta(x))=g_{\mu}(x)  \, \, \, \text{ for all } \,  x\in \mathrm{supp}(\mu).  
\end{equation}

Before we proceed to the proof of the main theorem, we want to give a result about the behavior of the transport map of the equilibrium measure of a perturbed potential.   

\begin{proposition}\label{p:4}
Assume that $V$ is a potential satisfying the Assumption~\ref{A} and let $V_{t}=V+tf$, where $f$ is a $C^{3}$ function with all bounded derivatives and let $\mu_{V}$, $\mu_{t}$ be the equilibrium measures of $V$, respectively $V_{t}$.   If $\theta_{t}$ is the transport map from $\mu_{V}$ into $\mu_{t}$, then there is a $C^{1}$ function $\zeta$ on the support of $\mu_{V}$ such that 
\begin{equation}\label{e:l2}
\theta_{t}(x)=x+t\zeta(x)+o(t)
\end{equation}
uniformly in $x$ on the support of $\mu_{V}$.
\end{proposition}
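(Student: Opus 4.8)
\emph{Plan of proof.} The plan is to exploit the one-dimensional description of the transport map through distribution functions, combined with the first order expansion of the equilibrium measure from Theorem~\ref{t:4}. Write $F_t$ for the cumulative distribution function of $\mu_{t}:=\mu_{V+tf}$, so that $F_0$ is the one of $\mu_V$. By Assumption~\ref{A} the density of $\mu_V$ is $p_0(x)=(\mathcal{U}_{b,c}V')(x)\,\frac{\sqrt{4c^{2}-(x-b)^{2}}}{2\pi c^{2}}$ with $\mathcal{U}_{b,c}V'>0$ on $[-2c+b,2c+b]$, hence $F_0$ is a continuous strictly increasing bijection of $\supp(\mu_V)$ onto $[0,1]$; the same holds for $F_t$ when $|t|$ is small. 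Therefore the unique nondecreasing transport map of $\mu_V$ onto $\mu_t$ is $\theta_t=F_t^{-1}\circ F_0$, i.e.\ it is characterized by $F_t(\theta_t(x))=F_0(x)$ for $x\in[-2c+b,2c+b]$.

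The first step is to expand $F_t$ to first order. Integrating the distributional expansion $d\mu_{V+tf}=d\mu_V+t\,d\nu_f+O(t^{2})$ of Theorem~\ref{t:4} and recalling $\Psi_f(x)=\int_{-\infty}^{x}\nu_f$ from \eqref{e:psi}, one gets $F_t(y)=F_0(y)+t\,\Psi_f(y)+o(t)$, uniformly in $y$: all the densities in play are of the uniformly bounded form $h_t(y)\sqrt{4c_t^{2}-(y-b_t)^{2}}$ with $c_t,b_t$ depending $C^{2}$ on $t$ (as in the proof of Theorem~\ref{t:3}), so $(F_t-F_0)/t$ has uniformly bounded variation and converges to the continuous function $\Psi_f$ in the sense of distributions, hence uniformly. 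Writing $\theta_t(x)=x+t\zeta(x)+\varepsilon_t(x)$, substituting into $F_t(\theta_t(x))=F_0(x)$, expanding with the mean value theorem and matching the terms of order $t$ forces
\[
\zeta(x)=-\frac{\Psi_f(x)}{p_0(x)}=-c^{2}\,\frac{(\mathcal{U}_{b,c}f)(x)}{(\mathcal{U}_{b,c}V')(x)} ,
\]
the second equality by direct substitution of the explicit forms of $\Psi_f$ and $p_0$. The crucial feature is that the factor $\sqrt{4c^{2}-(x-b)^{2}}$ cancels; since $\mathcal{U}_{b,c}V'>0$ and both $\mathcal{U}_{b,c}V'$ and $\mathcal{U}_{b,c}f$ are $C^{1}$ on $[-2c+b,2c+b]$ (being $\mathcal{U}_{b,c}$ applied to $C^{2}$ functions), $\zeta$ is $C^{1}$ up to the endpoints, as required.

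The remaining and main point is the uniformity, namely $\varepsilon_t(x)=o(t)$ uniformly in $x$. This is the delicate step, because $p_0$ vanishes like a square root at the two endpoints of $\supp(\mu_V)$, so $F_t^{-1}$ has a vertical tangent there and the naive inverse-function estimate degenerates. I would split $\supp(\mu_V)$ into a compact central part $[-2c+b+\delta,\,2c+b-\delta]$ and two short end segments. On the central part $p_0$ is bounded below, the expansion $F_t=F_0+t\Psi_f+o(t)$ holds in $C^{1}$ with uniform remainder, and the implicit function theorem applied to $F_t(\theta)=F_0(x)$ gives $\theta_t(x)=x+t\zeta(x)+o(t)$ uniformly. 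On each end segment I would instead use the explicit edge asymptotics: there $p_0(x)\sim \mathrm{const}\cdot\sqrt{\mathrm{dist}}$ and $F_0(x)$ differs from its endpoint value by $\sim\mathrm{const}\cdot\mathrm{dist}^{3/2}$, and likewise for $\mu_t$ with endpoint and constants depending $C^{2}$ on $t$; solving $F_t(\theta_t(x))=F_0(x)$ to leading order then shows that the distance of $\theta_t(x)$ to the corresponding endpoint $\pm2c_t+b_t$ is, to order $t$, a $C^{1}$ multiple of the distance of $x$ to $\pm2c+b$ plus strictly higher-order terms, yielding $\theta_t(x)-x=t\zeta(x)+o(t)$ uniformly near the edge (consistently with the boundary value $\zeta(\pm2c+b)=\pm2c_1+b_1$ forced by $\pm2c_t+b_t=\pm2c+b+t(\pm2c_1+b_1)+o(t)$). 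Matching the two regions with $\delta$ fixed but small completes the argument. An alternative to the end-segment analysis is to work directly with the density relation \eqref{e:tran}, $\theta_t'(x)\,p_t(\theta_t(x))=p_0(x)$, together with the boundary conditions $\theta_t(\pm2c+b)=\pm2c_t+b_t$, and a Gronwall comparison after a desingularizing change of variables. I expect this uniform control up to the moving endpoints, where the densities degenerate, to be the only genuine difficulty.
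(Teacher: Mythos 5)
Your route -- characterizing the monotone transport map as $\theta_t=F_t^{-1}\circ F_0$, expanding the distribution functions to first order in $t$, and reading off $\zeta=-\Psi_f/p_0=-c^{2}\,\mathcal{U}_{b,c}f/\mathcal{U}_{b,c}V'$ -- is a genuinely different parametrization from the paper's, which works instead with the density relation $\theta_t'(x)\,p_t(\theta_t(x))=p_0(x)$ viewed as an ODE for the rescaled map. Your identification of $\zeta$ and its $C^1$ regularity (via the cancellation of the square-root factor and the positivity of $\mathcal{U}_{b,c}V'$ from Assumption~\ref{A}) is correct and agrees with what the paper uses later. You have also correctly located the crux: the density vanishes like $\sqrt{\mathrm{dist}}$ at the two (moving) endpoints, so $F_t^{-1}$ is only H\"older-$2/3$ there and a naive inversion of $F_t(\theta_t(x))=F_0(x)$ turns an $o(t)$ error in $F_t$ into an $o(t^{2/3})$ error in $\theta_t$, which is not good enough.

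The problem is that this crux is exactly where your argument stops being a proof. The sentence ``solving $F_t(\theta_t(x))=F_0(x)$ to leading order then shows that the distance of $\theta_t(x)$ to the corresponding endpoint is, to order $t$, a $C^1$ multiple of the distance of $x$ to $\pm2c+b$ plus strictly higher-order terms'' is precisely the statement that needs proof, and it requires a uniform-in-$t$ expansion of $F_t$ near the moving edge beyond leading order, \emph{together with uniform control of the $t$-derivatives of the subleading terms}; without that, matching $A_t\sigma^{3/2}(1+O(\sigma))=A_0 s^{3/2}(1+O(s))$ only gives $\sigma=(A_0/A_t)^{2/3}s(1+O(s))$ with an error that is $O(s)$ relative but not visibly $o(t)$ absolute. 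This is the content of the two-page edge analysis in the paper: after the desingularizing substitution $u\mapsto 2(u^2-1)/(u^2+1)$ the map satisfies $\xi_t(u)=\big(\int_0^u v^2F(t,v,\xi_t(v))\,dv\big)^{1/3}$, and the uniform $o(t)$ statement comes from showing that the integration constant $L_t$ in the linear ODE for $\eta_t=\partial_t\xi_t$ vanishes and that $\sup_t|\eta_t(u)|\le Ku$ near the edge. Your ``alternative'' (density relation plus Gronwall after a desingularizing change of variables) is in fact the paper's actual proof, but naming it is not carrying it out. A second, smaller soft spot: the inference ``$(F_t-F_0)/t$ has uniformly bounded variation and converges in the sense of distributions to the continuous $\Psi_f$, hence uniformly'' is false as a general implication (indicators of shrinking intervals are a counterexample); here it is repairable from the explicit joint $C^2$ dependence of the density on $(t,x)$, but it should be argued from that structure rather than from bounded variation.
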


\begin{proof}
By rescaling, we may assume that the support of $\mu_{V}$ is $[-2,2]$.   As we pointed out in the remark following Assumption~\ref{A}, the support of the measure $\mu_{t}$ is $[-2c_{t}+b_{t},2c_{t}+b_{t}]$, where $c_{t}$ and $b_{t}$ are of $C^{2}$ class in $t$.  

From the above presentation of the transportation map, it is clear that $\theta_{t}$ maps $[-2,2]$ into $[-2c_{t}+b_{t},2c_{t}+b_{t}]$ with $\theta_{t}(-2)=-2c_{t}+b_{t}$ and $\theta_{t}(2)=2c_{t}+b_{t}$.   In order to remove the varying endpoints, we rescale $\theta_{t}(x)=2c_{t}\psi_{t}(x)+b_{t}$ and with the help of \eqref{e:tran}, Assumption~\ref{A} and Theorem~\ref{t:51} we learn that 
\begin{equation}\label{e:last}
\psi_{t}'(x)w(t,\psi_{t}(x))\sqrt{4-\psi_{t}^{2}(x)}=w(0,x)\sqrt{4-x^{2}} \text{ for }x\in(-2,2),
\end{equation}
where $w(t,\cdot)$ is the density of the equilibrium measure of $V(c_{t}x+b_{t})$ with respect to the semicircular law.  The important fact to be spelled out here is that $w:[-t_{0}, t_{0}]\times [-2,2]\to (0,\infty)$ is of  $C^{2}$ class for some small enough $t_{0}$.   

Now, if we set $\Psi_{t}(x)=\partial_{t}\psi_{t}(x)$, then 
\begin{equation}\label{e:l20}
\psi_{t}(x)=\psi_{0}(x)+\int_{0}^{t}\Psi_{s}(x)ds=x+t\Psi_{0}(x)+\int_{0}^{t}(\Psi_{s}(x)-\Psi_{0}(x))ds
\end{equation}
and we get the claimed expansion as soon as we prove that $\Psi_{0}$ can be extended to a continuous function on $[-2,2]$ and also that $\sup_{x\in(-2,2)}|\Psi_{s}(x)-\Psi_{0}(x)|$ converges to $0$ when $s$ converges to $0$.

If we take the behavior of the solution $\psi_{t}$ to \eqref{e:last} at points $x\in(-2,2)$, then standard results of perturbation of ordinary differential equations tell us that the perturbation with respect to $t$ is of class $C^{2}$.  However, at the endpoints $\pm2$ this becomes problematic and for this case, one needs a separate analysis.  At least we know that $\partial_{t}\psi_{t}(x)|_{t=0}$ is well defined and uniformly continuous on compact sets of $(-2,2)$.  In particular this justifies the writing of \eqref{e:l2} uniformly on any compact interval in $(-2,2)$ for some continuous function $\zeta$ on $(-2,2)$.

To deal with the behavior at the endpoint $-2$, the other endpoint, $2$,  begin treated similarly.   To this end, we want to remove the square root behavior at $-2$ and for this purpose, we consider $\phi:[0,\infty]\to[-2,2]$, given by 
\[\tag{*}
\phi(u)=\frac{2(u^{2}-1)}{u^{2}+1}.  
\]
Its inverse is $\phi^{-1}(x)=\frac{2+x}{2-x}$ and one of the main reasons of introducing $\phi$ is that  
\[\tag{**}
\sqrt{4-\phi^{2}(u)}=\frac{4u}{1+u^{2}},\text{ and }\phi'(u)=\frac{4u}{(1+u^{2})^{2}}.
\]
Now we take the function $\xi_{t}=\phi^{-1}\circ\psi_{t}\circ\phi$ and hence $\psi_{t}=\phi\circ\xi_{t}\circ\phi^{-1}$ which then gives 
\[
\psi_{t}'(\phi(u))=\phi'(\xi_{t}(u))\xi_{t}'(u)\frac{1}{\phi'(u)}=\xi_{t}'(u)\frac{\xi_{t}(u)}{u}\frac{(1+u^{2})^{2}}{(1+\xi_{t}^{2}(u))^{2}}.
\]
This plugged into \eqref{e:last} with $x=\phi(u)$ yields 
\begin{equation}\label{e:l10}
\xi_{t}'(u)=\frac{u^{2}}{\xi_{t}^{2}(u)}F(t,u,\xi_{t}(u))
\end{equation}
where 
\[
F(t,u,y)=\frac{w(0,\phi(u))(1+y^{2})^{3}}{w(t,\phi(y))(1+u^{2})^{3}}.
\]
Notice that $F$ is a nice positive and $C^{2}$ function in all variables $t,u,y$.  In particular, standard results in ordinary differential equations guarantee that $(t,u)\to\xi_{t}(u)$ is a $C^{2}$ function in both $(t,x)$ on $[-t_{0},t_{0}]\times(0,1]$.

With this $\xi_{t}$ replacing $\psi_{t}$ it suffices to show that the writing \eqref{e:l2} holds true uniformly for $u$ in the interval $[0,1]$.  To do this, set $\eta_{t}(x)=\partial_{t}\xi_{t}(x)$ and write as in \eqref{e:l20}
\[
\xi_{t}(u)=\xi_{0}(u)+t\eta_{0}(u)+\int_{0}^{t}(\eta_{s}(u)-\eta_{0}(u))ds.
\]
Therefore it suffices to prove now that $\eta_{0}$ extends to a continuous function on $[0,1]$ and $\sup_{u\in(0,1]}|\eta_{s}(u)-\eta_{0}(u)|$ converges to $0$ as $s$ goes to $0$.  

We know that $\xi_{t}$ is a continuous function on $[0,1]$ for any small $t$ with $\xi_{t}(0)=0$.  From \eqref{e:l10}, 
\begin{equation}\label{e:l12}
\xi_{t}(u)=\left(\int_{0}^{u}v^{2}F(t,v,\xi_{t}(v))dv \right)^{1/3}
\end{equation}
whose first consequence is that $\xi_{t}(u)/u$ has a limit as $u$ converges to $0$, or otherwise stated that the derivative $\xi_{t}'$ is well defined at $0$ (for any $t\in[-t_{0},t_{0}]$) and in particular can be computed as  
\[
\xi_{t}'(0)=\left( F(t,0,0) \right)^{1/3}.  
\]
What this gives in terms of $\xi_{t}$ is that from \eqref{e:l10} and the $F(0,0,0)=1$, there is a positive constant $C>0$ such that for any $t\in[-t_{0},t_{0}]$ and $u\in[0,1]$, $C^{-1}\le \xi'_{t}(u)<C$ and also $C^{-1}\le \frac{\xi_{t}(u)}{u}\le C$. 

Now we look at our main interest, the derivative $\eta_{t}(u)=\partial_{t}\xi_{t}(u)$.   Observe that from \eqref{e:l10}, it is easy to deduce that 
\[
\eta_{t}'(u)=-\frac{2u^{2}}{\xi_{t}^{3}(u)}\eta_{t}(u)F(t,u,\xi_{t}(u))+\partial_{y}F(t,u,\xi_{t}(u))\eta_{t}(u)+\partial_{t}F(t,u,\xi_{t}(u))\text{ for all }u\in(0,1),t\in[-t_{0},t_{0}],
\]
and on the account of \eqref{e:l10}, we can rewrite this in the form
\[
\eta_{t}'(u)=-\frac{2\xi_{t}'(u)}{\xi_{t}(u)}\eta_{t}(u)+\partial_{y}F(t,u,\xi_{t}(u))\eta_{t}(u)+\partial_{t}F(t,u,\xi_{t}(u))=-\left(\frac{2\xi_{t}'(u)}{\xi_{t}(u)}+a_{t}(u)\right)\eta_{t}(u)+b_{t}(u),
\]
with $a_{t}(u)=\partial_{y}F(t,u,\xi_{t}(u))$ and $b_{t}(u)=\partial_{t}F(t,u,\xi_{t}(u))$.  This implies that there is a constant $L_{t}$ with the property that for $t\in[-t_{0},t_{0}]$ and $u\in(0,1)$,
\[
\eta_{t}(u)\xi^{2}_{t}(u)e^{A_{t}(u)}-\int_{0}^{u}b_{t}(v)\xi^{2}_{t}(v)e^{A_{t}(v)}dv=L_{t}
\]
with $A_{t}(u)=\int_{0}^{u}a_{t}(\sigma)d\sigma$.  Since for any fixed $u>0$, the left hand side is continuous in $t$, it follows that $L_{t}$ is also a continuous function of $t$.  

Setting $B_{t}(u)=\int_{0}^{u}b_{t}(v)\xi^{2}_{t}(v)e^{A_{t}(v)}dv$, we write now,
\[
\xi_{t}(u)=\xi_{0}(u)+\int_{0}^{t}\eta_{s}(u)ds=u+\int_{0}^{t}\frac{L_{s}}{\xi^{2}_{s}(u)e^{A_{s}(u)}}ds+\int_{0}^{t}\frac{B_{s}(u)}{\xi^{2}_{s}(u)e^{A_{s}(u)}}ds,
\]
from which, multiplication by $u^{2}$ and passing to the limit $u\to0$ with the help of \eqref{e:l12} yields 
\[
\int_{0}^{t}\frac{L_{s}}{(F(s,0,0))^{2/3}}ds=0 
\]
for all small $t$, which returns that $L_{t}=0$ for all $t$ small enough.  Hence, it is pretty clear now that 
\[
\eta_{t}(u)=\frac{1}{\xi_{t}^{2}(u)e^{A_{t}(u)}}\int_{0}^{u}b_{t}(v)\xi^{2}_{t}(v)e^{A_{t}(v)}dv
\] 
can be extended to a continuous function at $0$ for each $t\in[-t_{0},t_{0}]$.   In particular $\eta_{0}$ is continuous on the interval $[0,1]$.  In fact a stronger statement holds true here, namely, that 
\[
\sup_{t\in[-t_{0},t_{0}]}|\eta_{t}(u)|\xrightarrow[u\to0]{}0
\]
which follows from the fact that there is a constant $C>0$ such that 
\[
C^{-1}\le\sup_{t\in[-t_{0},t_{0}]}\frac{\xi_{t}(u)}{u}\le C\quad \text{ and }\quad \sup_{t\in[-t_{0},t_{0}],u\in[0,1]}(|a_{t}(u)|+|b_{t}(u)|)\le C, 
\]
which in turn yields that for some $K>0$, 
\begin{equation}\label{e:l21}
\sup_{t\in[-t_{0},t_{0}]}|\eta_{t}(u)|\le K\frac{1}{u^{2}}\int_{0}^{u}v^{2}dv\le Ku/3.  
\end{equation}
What is left to prove here is that $\sup_{u\in(0,1]}|\eta_{s}(u)-\eta_{0}(u)|$ converges to $0$ as $s$ converges to $0$.  If this were not the case, then there would be $\epsilon>0$, $s_{n}\xrightarrow[n\to\infty]{}0$ and $u_{n}\in[0,1]$ such that $|\eta_{s_{n}}(u_{n})-\eta_{0}(u_{n})|\ge\epsilon$.  Without loss of generality we may assume that $u_{n}$ is convergent to some $v\in[0,1]$ and then if $v=0$ contradicts \eqref{e:l21}, while $v\ne0$ contradicts the continuity of $\eta$ at $(0,v)$.  \qedhere
\end{proof}

The following theorem, showing that the free Poincar\'e inequality is
 implied by the transportation or Log-Sobolev inequalitis, is one main conclusion of this work.

\begin{theorem}\label{t:5}
Let $V$ satisfy Assumption~\ref{A} and the support of $\mu_{V}$ be $[-2c+b,2c+b]$.  Then for the measure $\mu_{V}$, and $\rho\ge0$, 
\begin{equation}\label{e:14}
T(\rho)\implies P(\rho) \text{ and }P_{2}(\rho),
\end{equation}
and
\begin{equation}\label{e:15}
LSI(\rho)\implies P(\rho),
\end{equation}
where $P_{2,3,4}(\rho)$ are defined in Theorem~\ref{t:eq} and $P(\rho)$ by \eqref{e1:1}.  
Now, if $\rho\in\R$, then 
\begin{equation}\label{e:58}
HWI(\rho)\implies P_{3}(\rho).
\end{equation}
In particular, if $\rho>0$, then, (cf. Theorem~\ref{t:eq}),  $HWI(\rho)\implies P(\rho)$.

\end{theorem}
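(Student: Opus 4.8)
The plan is to probe each of the three functional inequalities with the one–parameter family of equilibrium measures $\mu_t:=\mu_{V_t}$, $V_t:=V+tf$, where $f\in C^3_b(\R)$ is an arbitrary non-constant perturbation direction, and then expand everything to second order in $t$. By the rescaling relations of Section~\ref{s:2} I may assume $b=0$, $c=1$; since each side of $P(\rho)$, $P_2(\rho)$, $P_3(\rho)$ depends on the test function only through its restriction to $\supp\mu_V=[-2,2]$, and every smooth function on $[-2,2]$ agrees there with some function in $C^3_b(\R)$, it suffices to prove these inequalities for such $f$. Also $\mu_V=w\,d\alpha$ with $w=\mathcal{U}(V')>0$ on $[-2,2]$ by Assumption~\ref{A} and Theorem~\ref{t:51}, so $1/w$ is bounded and $\alpha$–integrable, and for $t$ small $V_t$ again satisfies Assumption~\ref{A}, with $\mu_t\to\mu_V$ weakly. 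I would first record three expansions. \emph{Energy.} Since $E_{V_t}(\nu)=E_V(\nu)+t\int f\,d\nu$, one has $E_V(\mu_t)=E_t-t\int f\,d\mu_t$; feeding in Theorem~\ref{t:3} (with $g=0$) for $E_t$ and Theorem~\ref{t:4} for $\int f\,d\mu_t=\int f\,d\mu_V-\frac{t}{2}\langle\mathcal{N}f,f\rangle+O(t^2)$, the first–order terms cancel and, using \eqref{ep:9},
\[
E_V(\mu_t)-E_V(\mu_V)=\frac{t^2}{4}\,\langle\mathcal{N}f,f\rangle+o(t^2)=\frac{t^2}{2}\iint\Bigl(\frac{f(x)-f(y)}{x-y}\Bigr)^2\omega(dx\,dy)+o(t^2),
\]
which is the Hessian of $E_V$ at its minimizer in the direction $\nu_f$ and hence non-negative, as it should be. \emph{Fisher information.} The variational equality \eqref{ep:var} for the equilibrium measure $\mu_t$ gives, on differentiating, $H\mu_t=V_t'=V'+tf'$ $\mu_t$–a.e., whence $I_V(\mu_t|\mu_V)=\int(H\mu_t-V')^2\,d\mu_t=t^2\int(f')^2\,d\mu_t=t^2\int(f')^2\,d\mu_V+o(t^2)$ by weak convergence. \emph{Transport cost.} By Proposition~\ref{p:4} the monotone transport map $\theta_t$ of $\mu_V$ onto $\mu_t$ obeys $\theta_t(x)=x+t\zeta(x)+o(t)$ uniformly on $[-2,2]$; writing $F_\mu$ for the cumulative distribution function and using $F_{\mu_t}=F_{\mu_V}+t\Psi_f+o(t)$ (Theorem~\ref{t:4}) with $\Psi_f(x)=\frac{\sqrt{4-x^2}}{2\pi}(\mathcal{U}f)(x)$ and $F_{\mu_V}'(x)=w(x)\frac{\sqrt{4-x^2}}{2\pi}$, the identity $F_{\mu_t}(\theta_t(x))=F_{\mu_V}(x)$ forces $\zeta=-(\mathcal{U}f)/w$, so that $W_2^2(\mu_t,\mu_V)=t^2\int\zeta^2\,d\mu_V+o(t^2)=t^2\int\frac{(\mathcal{U}f)^2}{w}\,d\alpha+o(t^2)$.

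Granting these, the three implications should fall out on inserting $\nu=\mu_t$ into the defining inequalities, dividing by $t^2$ and letting $t\to0$. From $T(\rho)$, the inequality $\rho W_2^2(\mu_t,\mu_V)\le E_V(\mu_t)-E_V(\mu_V)$ becomes $2\rho\int\frac{(\mathcal{U}f)^2}{w}\,d\alpha\le\iint\bigl(\frac{f(x)-f(y)}{x-y}\bigr)^2\omega(dx\,dy)$, which is precisely $P_2(\rho)$ for $f$; as $f$ was arbitrary and $w>0$ with $1/w\in L^1(\alpha)$, the implication $(2)\implies(1)$ of Theorem~\ref{t:eq} (whose integration–by–parts proof needs nothing more) delivers $P(\rho)$. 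From $LSI(\rho)$, the inequality $4\rho(E_V(\mu_t)-E_V(\mu_V))\le I_V(\mu_t|\mu_V)$ becomes $2\rho\iint\bigl(\frac{f(x)-f(y)}{x-y}\bigr)^2\omega(dx\,dy)\le\int(f')^2\,d\mu_V$, which is $P(\rho)$. From $HWI(\rho)$, matching the coefficients of $t^2$ in $E_V(\mu_t)-E_V(\mu_V)\le\sqrt{I_V(\mu_t|\mu_V)}\,W_2(\mu_t,\mu_V)-\rho W_2^2(\mu_t,\mu_V)$ yields $\iint\bigl(\frac{f(x)-f(y)}{x-y}\bigr)^2\omega(dx\,dy)\le 2\sqrt{\int(f')^2\,d\mu_V}\,\sqrt{\int\frac{(\mathcal{U}f)^2}{w}\,d\alpha}-2\rho\int\frac{(\mathcal{U}f)^2}{w}\,d\alpha$, which is $P_3(\rho)$; for $\rho>0$ this upgrades to $P(\rho)$ through $(3)\implies(1)$ of Theorem~\ref{t:eq}.

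The hard part will be the \emph{Energy} expansion, and this is exactly where the free case departs from the classical one: it requires combining the two \emph{separate} perturbation results — Theorem~\ref{t:3} for $E_{V_t}$ and Theorem~\ref{t:4} for $\mu_{V_t}$ — and the clean outcome $\frac14\langle\mathcal{N}f,f\rangle$ after the first–order cancellation rests on the nontrivial cancellations of the varying endpoint coefficients $c_1,c_2,b_1,b_2$ already carried out in the proof of Theorem~\ref{t:3} (see Remark~\ref{r:100}). The second genuinely new ingredient is the identification $\zeta=-(\mathcal{U}f)/w$ of the linearized transport map, which is what makes the quantity $\int\frac{(\mathcal{U}f)^2}{w}\,d\alpha$ of $P_2(\rho)$ and $P_3(\rho)$ appear; the uniformity assertion in Proposition~\ref{p:4} is precisely what is needed to turn the pointwise expansion of $\theta_t$ into the stated expansion of $W_2^2$. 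The remaining points — that $V_t$ stays inside Assumption~\ref{A} for small $t$, that $(f')^2$ integrates continuously against $\mu_t$ as $t\to0$, and the final density/approximation step from $f\in C^3_b(\R)$ to all admissible test functions — are routine.
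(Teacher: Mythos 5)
Your proposal is correct and follows essentially the same route as the paper: it is the paper's second (perturbation-based) proof of \eqref{e:14}, probing each inequality with $\mu_{V+tf}$ and combining Theorem~\ref{t:3}, Theorem~\ref{t:4} and Proposition~\ref{p:4} to get the three second-order expansions, then reading off $P_2(\rho)$, $P(\rho)$ and $P_3(\rho)$ exactly as in the text. The only difference is cosmetic: the paper additionally offers a first proof of the transportation implication via the Kantorovich dual and the Hamilton--Jacobi semigroup, which lands directly on $P(\rho)$, whereas you reach $P(\rho)$ through the equivalence $(2)\Rightarrow(1)$ of Theorem~\ref{t:eq} — which is legitimate since that equivalence is already established.
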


\begin{proof} If $\rho=0$, then \eqref{e:14} and \eqref{e:15} are trivial
so that we assume below that $\rho > 0$.  Assume furthermore that $b=0,c=1$.  
We will give here two proofs of \eqref{e:14}.  One is inspired by the classical case and uses the Hamilton-Jacobi semigroup and the dual formulation of the Wasserstein distance, while the other is based directly on the perturbation of the potential.

For the first proof, we employ the tools from the infimum convolution semigroup used in \cite{BL2} for the classical case.   
More precisely, take an arbitrary smooth function $f:[-2,2]\to\R$ and extend it to a smooth compactly supported function on the whole $\R$.  Now use the dual formulation of the Wasserstein distance, which now make the transportation inequality equivalent to 
\[
\rho\left(\int gd\nu -\int fd\mu_{V}\right)\le E_{V}(\nu)-E_{V}(\mu_{V})
\]
for any pair of functions with $g(x)-f(y)\le (x-y)^{2}$.  For  a given $f$, the optimal choice of $g$ is given by $g=Q f$, where 
\begin{equation}\label{e:Q}
(Qf)(x)=\inf_{y\in\R}\{f(y)+(x-y)^{2} \}
\end{equation}
then, 
\[
-\rho \int fd\mu_{V}\le \int (V-\rho Qf)d\nu-\iint \log|x-y|\nu(dx)\nu(dy)-E_{V} 
\]
for any measure $\nu$.  In particular, minimizing over all measures $\nu$, one obtains that 
\[
-\rho \int fd\mu_{V}\le E_{V-\rho Qf}-E_{V}. 
\]
Next, we point out that if we set 
\[
(Q_{t}f)(x)=\inf_{y\in\R}\Big \{f(y)+\frac{(x-y)^{2}}{t} \Big \} ,
\]
then (cf. \cite[Chapter 3]{Evans}) $h(t,x)=(Q_{t}f)(x)$ satisfies the Hamilton-Jacobi equation 
\begin{equation}\label{e:20}
\partial_{t} h+\frac{1}{4}\left(h'\right)^{2}=0.
\end{equation}
Replacing $f$ by $tf$ and using the fact that $Q(tf)=tQ_{t}f=tf-\frac{t^{2}}{4}(f')^{2}+o(t^{2})$ combined with the result of Theorem~\ref{t:3} one is led to 
\begin{equation}\label{e6:1}
 2\rho \iint\left( \frac{f(x)-f(y)}{x-y}  \right)^{2}\omega(dx\,dy)\le\int (f')^{2}d\mu_{V}
\end{equation}
which is exactly $P(\rho)$ from \eqref{e1:1}  for $\mu_{V}$.

Now we turn to the second proof.  We apply the transportation inequality  \eqref{e6:13} with $\mu$ replaced by $\mu_{V+tf}$ and write it as follows
\begin{equation}\label{e:52}
t\int fd\mu_{V+tf}+\rho W_{2}^{2}(\mu_{V+tf},\mu_{V})\le E_{V+tf}-E_{V}.
\end{equation}

Now if $\theta_{t}$ denotes the transportation map of $\mu_{V}$ into $\mu_{V+tf}$,  using Proposition~\ref{p:4} we learn that $\theta_{t}(x)=x+t\zeta(x)+o(t)$, and from here, for any $C^{1}$ function $\phi$ on $[-2,2]$,  
\[
\int \phi(\theta_{t}(x))\mu_{V}(dx)=\int \phi(x)\mu_{V+tf}
\] 
whose expansion in $t$ near $0$ and Theorem~\ref{t:4}, gives 
\[
\int \phi'(x)\zeta(x)\mu_{V}(dx)=-\int \phi(x)\nu_{f}(dx)=-\int \phi'(x)\Psi_{f}(x)dx.
\]
Since $\zeta$ is $C^{1}$, this means that there is a constant $C\in\R$ such that  $\zeta(x)g_{V}(x)=C-\Psi_{f}(x)$ for all $x\in[-2,2]$.  This equality at $x=\pm2$ and the continuity of $\zeta$ at $\pm2$ yields that $C=0$.  Hence 
\[
\zeta(x)=-\frac{\Psi_{f}(x)}{g_{V}(x)}=-\frac{\mathcal{U}f}{\mathcal{U}(V')} \, .
\]
 This means that 
\begin{equation}\label{e8:1}
W_{2}^{2}(\mu_{V+tf},\mu_{V})=t^{2}\int\frac{(\mathcal{U}f)^{2}}{\mathcal{U}(V')} \, d\alpha+o(t^{2}).
\end{equation}
Invoking now \eqref{e:53} and \eqref{e:11}, the result is 
\[
t^{2}\rho \int\frac{(\mathcal{U}f)^{2}}{\mathcal{U}(V')} d\alpha+t^{2}\int fd\nu_{f}\le -\frac{t^{2}}{2}\iint\left( \frac{f(x)-f(y)}{x-y}  \right)^{2}\omega (dx\,dy)+o(t^{2}).
\]
Finally, since (cf. \eqref{e:psi}) 
\[
\int fd\nu_{f}=-\frac{1}{2}\int f\mathcal{N}f\,d\beta=-\iint\left( \frac{f(x)-f(y)}{x-y}  \right)^{2}\omega (dx\,dy),
\]
we arrive at 
\begin{equation}\label{e6:2}
2\rho \int\frac{(\mathcal{U}f)^{2}}{\mathcal{U}(V')} d\alpha \le \iint\left( \frac{f(x)-f(y)}{x-y}  \right)^{2}\omega (dx\,dy)
\end{equation}
which is actually the second equivalent form of $P(\rho)$ from Theorem~\ref{t:eq}.

Now, to prove \eqref{e:15}, we proceed in the same vein.  Take the measure $\mu_{t}$, the equilibrium measure associated to the potential $V+tf$, apply \eqref{e:ls} to it and rewrite it, for small enough $t$, in the following way
\[
4\rho\left(E_{V+tf}-E_{V}-t\int fd\mu_{t}\right)\le t^{2}\int (f')^{2}d\mu_{t} 
\]
where here we used the fact that for small $t$, 
\begin{equation}\label{e8:2}
H\mu_{t}(x)=V'(x)+tf'(x) \text{ for } x \text{ in the support of } \mu_{t}.
\end{equation}  
Therefore, invoking \eqref{e:11} and \eqref{e:53}, we obtain 
\[
\frac{t^{2}}{2}\int f\mathcal{N}f\, d\beta-\frac{t^{2}}{2}\iint \left(\frac{f(x)-f(y)}{x-y} \right)^{2}\omega(dx\,dy)+o(t^{2})\le \frac{t^{2}}{4\rho}\int (f')^{2}\mu_{V}(dx)
\]
which combined with \eqref{ep:9} gives  \eqref{e:15}.

Now, from $HWI(\rho)$, \eqref{e8:1} and \eqref{e8:2}, \eqref{e:58} follows at once. \qedhere
\end{proof}

\begin{remark}  It is interesting to point out that the dual formulation of the transportation implies  $P(\rho)$, while working with the transportation itself (basically the Wasserstein distance) yields $P_{2}(\rho)$ which as we noticed after the proof of Theorem~\ref{t:eq}, is in some sense the dual form of $P(\rho)$.   This is reminiscent of the discussion of Otto-Villani \cite{OV} about the Poincar\'e inequality in the classical case.  
We should also mention that $HWI(\rho)$ for a real $\rho$, gives some sort of ``defective''
version of Poincar\'e. 
\end{remark}

\begin{remark}
We know that the transportation and the Log-Sobolev are satisfied in the case of potentials $V$ which are convex.   
The natural question is to see other cases where these functional inequalities are satisfied.  
As it was pointed out in \cite{Biane2}, there are examples of double well potentials $V$ for which the Log-Sobolev does not hold.   These are cases where the equilibrium measure is supported on two intervals.   It is not clear (at least we do not have any example) if the functional inequalities hold for cases where the measures are supported on several intervals.   
\end{remark}

\begin{remark}
Note that in \cite{OV}, the linearization of classical  $HWI(\rho)$ with $\rho>0$ implies a seemingly stronger inequality than Poincar\'e's with constant $\rho>0$.  Even though Otto and Villani do not point this out, this is in fact equivalent to Poincar\'e's with constant $\rho>0$.  

\end{remark}

\begin{remark}  We pointed out in \cite[Theorem 2]{LP} that if the potential $V$ is such that $V(x)-\rho |x|^{p}$ for some $p>1$, then the following transportation inequality holds 
\begin{equation}\label{eq:t:1}
c_{p} \rho \, W_{p}^{p}(\mu,\mu_{V})\le E(\mu)-E(\mu_{V})
\end{equation}
where $c_{p}=\inf_{x\in\R}\left( |1+x|^{p}-|x|^{p}-p \mathrm{sign}(x)|x|^{p-1}\right)$.  Unfortunately, it turns out that for $1<p<2$,  $c_{p}=0$ and thus this inequality does not say anything.    On the other hand,  for $p>2$ it implies a Poincar\'e's inequality with $\rho=0$.  Indeed, due to the fact that $W_{p}^{p}(\mu_{V+tf},\mu_{V})=o( t^{p})$, for $p>2$ this order is higher than $2$, thus nothing interesting is seen from this inequality as $t$ goes to $0$.
\end{remark} 

The reader might wonder why the classical perturbation argument does not work.   This is what we discuss in the remaining of this section.  

The standard perturbation used in the classical case to linearize the Log-Sobolev or the transportation
inequalities in order to reach the Poincar\'e inequality is $\nu_{t}=(1+tF)\mu_{V}$
for small $t$ and a function $F$ with $\int F\,d\mu_{V}=0$.  We show here that while this gives the free Poincar\'e's for a large class of functions it is not the whole story.  

For simplicity we will assume that $b=0$, $c=1$.  
Take a continuous function $F$ on $[-2,2]$ such that $\int Fd\mu_{V}=0$.  This in particular means that for small $t$, $\nu_{t}=(1+tF)d\mu_{V}$ is again a probability measure.  Thus applying the transportation, we get 
\[
\rho W_{2}^{2}(\nu_{t},\mu_{V})\le t\int VFd\mu_{V}-2t\iint \log|x-y|f\mu(dx)\mu_{V}(dy)-t^{2}\iint \log|x-y|F(x)F(y)\mu_{V}(dx)\mu_{V}(dy)
\]
which, after the use of the fact that $V(x)=2\int \log|x-y|\mu_{V}(dy)+C$ on the support of $\mu_{V}$, leads to 
\begin{equation}\label{ecp:1}
\rho\int (\zeta_{F}(x) )^{2}\mu_{V}(dx)\le-\iint \log|x-y|F(x)F(y)\mu_{V}(dx)\mu_{V}(dy).
\end{equation}
Here in between we used that $\theta_{t}$, the transport map of $\mu_{V}$ into $\mu_{t}$, is given by 
\[
\theta_{t}(x)=x+t\zeta(x)+o(t), 
\]
using essentially the same proof as in Proposition~\ref{p:4}.   Now we proceed as in the second proof of $TCI(\rho)\implies P(\rho)$ from Theorem~\ref{t:5} to deduce that for any $C^{2}$ function on $[-2,2]$,
\[
\int \phi(\theta_{t}(x))\mu_{V}(dx)=\int \phi(x)(1+tF(x))\mu_{V}(dx)
\]
and so expansion in $t$ produces, 
\[
\int \phi'(x)\zeta(x)\mu_{V}(dx)=\int \phi(x)F(x)\mu_{V}(dx)=-\int \phi'(x)G(x)dx
\]
with $G(x)=\int_{-2}^{x}F(y)\mu_{V}(dy)$.  Consequently, $\zeta(x)g_{V}(x)=C+G(x)$, from which at $-2$ and the continuity of $\zeta$, we produce $C=0$, thus, 
\[
\zeta=\zeta_{F}(x)=-\frac{\int_{-2}^{x}F(y)\mu_{V}(dy)}{g_{V}(x)}
\]
where here $g_{V}=\sqrt{(4-x^{2})} \, \mathcal{U}(V')$ is the density of $\mu_{V}$ with respect to the Lebesgue measure.  

In order to make this look like \eqref{e1:4} ($P_{2}(\rho)$), we should take now $F$ such that $Fd\mu_{V}=\nu_{f}=\frac{\mathcal{N}f}{2}\beta$ or equivalently, 
\begin{equation}\label{e6:100}
F(x)=\frac{(\mathcal{N}f)(x)}{(4-x^{2})(\mathcal{U}V')(x)} \, .
\end{equation}
Hence, for those $F$ which can be represented in this form,   the right hand side of \eqref{ecp:1} becomes 
\[
\langle \mathcal{E}\mathcal{N}f,\mathcal{N}f\rangle = \langle f,\mathcal{N}f\rangle
\]
where we used the first equation of Proposition~\ref{p:1}.   Furthermore, now appealing to \eqref{ep:9} and \eqref{ep:302}, it results with 
\begin{equation}\label{e6:102}
2\rho \int \frac{(\mathcal{U}f)^{2}}{\mathcal{U}(V')} \, d\alpha
     \le \iint\left( \frac{f(x)-f(y)}{x-y}  \right)^{2}\omega(dx\,dy),
\end{equation}
which is \eqref{e1:4}.  

However,  in order to make sure that $F$ with the choice \eqref{e6:100} is continuous, we need to guarantee that  $\mathcal{N}f(\pm2)=0$, which otherwise stated (cf. Definition~\ref{d:1}) is the same as
\begin{equation}\label{e6:101}
\int f'(x)\,\beta(dx)=0\quad \text{ and }\quad \int xf'(x)\,\beta(dx)=0.
\end{equation}
This means that we get Poincar\'e's inequality however on a set of functions $f$ satisfying two constraints.   It is not clear to us how to extend \eqref{e6:102} from functions obeying \eqref{e6:101} to any $C^{1}$ function.   

Perhaps a more interesting remark here is that the obstructions from \eqref{e6:101} guarantee that the potential $V_{t}=V+tf$ satisfies, 
\[
\int V_{t}'(x)\,\beta(dx)=0\quad \text{ and }\quad \int xV_{t}'(x)\,\beta(dx)=2.
\] 
These two equations ensure that (cf. \eqref{eq:bc}) the endpoints of the equilibrium measure of $V_{t}$ are $-2$ and $2$, in other words we are just in the situation discussed in Remark~\ref{r:100}.  It seems that in order to overcome this obstruction, a nontrivial argument is needed and this is to some extent the content of Theorem~\ref{t:3} which is also reflected in the different perturbation we used in Section~\ref{s:5}.

A similar argument applies to the implication of free Poincar\'e by the free Log-Sobolev.   

\section*{Acknowledgments}
The second author would like to thank University of Toulouse for its warm and inspiring hospitality where part of this work was carried out.   

We also would like to express true appreciation for the scholar, careful, pertinent and sharp remarks of the anonymous reviewer which transformed the present paper into a better one.

\end{document}